\newtheorem{theorem}{Theorem}
\newtheorem{example}{Example}
\newtheorem{corollary}[theorem]{Corollary} 
\newtheorem{lemma}{Lemma}
\newtheorem{remark}{Remark}
\def\Er{\text{Er}}
\def\RV{\text{RV}}
\def\NGG{\text{NGG}}
\def\red#1{{#1}}%
\newcommand{\dif}{\ensuremath{\mathrm{d}}}
\newcommand{\T}{\ensuremath{\mathrm{\scriptscriptstyle T}}}
\newcommand{\alphab}{\ensuremath{\bm\alpha}}
\newcommand{\etab}{\ensuremath{\bm\eta}}
\newcommand{\thetab}{\ensuremath{\bm\theta}}
\newcommand{\sigmab}{\ensuremath{\bm\sigma}}
\DeclareMathOperator{\E}{E}
\DeclareMathOperator{\iid}{\overset{iid}{\sim}}
\DeclareMathOperator{\Beta}{Beta}
\DeclareMathOperator{\Ga}{Gamma}
\begin{document}

\title{\bf Heavy-Tailed NGG-Mixture Models}
  \author{Vianey Palacios Ram\'irez,  
    Miguel de Carvalho, Luis Guti\'errez Inostroza\footnote{
     V.~Palacios Ram\'irez is Lecturer in Statistics, Newcastle University (\textit{Vianey.Palacios-Ramirez@newcastle.ac.uk}). M.~de Carvalho is Professor in Statistics, University of Edinburgh (\textit{Miguel.deCarvalho@ed.ac.uk}). L.~Guti\'errez Inostroza is Assistant Professor in Statistics, Pontificia Universidad Cat\'olica de Chile. ANID--Millennium Science Initiative Program--Millennium Nucleus Center for the Discovery of Structures in Complex Data. (\textit{llgutier@mat.uc.cl}).}}
     \date{}
  \maketitle

\begin{abstract}
Heavy tails are often found in practice, and yet they are an Achilles 
heel of a variety of mainstream random probability measures such as
the Dirichlet process (DP). The first contribution of this paper focuses on
characterizing the tails of the so-called normalized generalized gamma (NGG) process.
We show that the right tail of an  NGG process is heavy-tailed provided that the centering
distribution is itself heavy-tailed; the DP is the only member of the NGG class 
that fails to obey this convenient property. A second contribution of the
paper rests on the development of two classes of heavy-tailed mixture
models and the assessment of their relative merits.  Multivariate
extensions of the proposed heavy-tailed mixtures are devised here, 
along with a predictor-dependent version,  to learn about the effect
of covariates on a multivariate heavy-tailed response.  The simulation
study suggests that the proposed method performs well in various 
scenarios, and we showcase the application of the proposed methods in
a neuroscience dataset.
\end{abstract}

Keywords: Bayesian nonparametrics, Bulk, Normalized generalized gamma process Random probability measure, Stick-breaking prior, Tail index.

\section{Introduction}\label{introduction} 
Thousands of heavy-tailed signals are produced on a day-to-day basis
across the globe in fields as diverse as engineering, finance, and
medicine. And yet, despite the widespread need for modeling these,
heavy tails remain a weak spot of several established random
probability measures such as the Dirichlet process (DP)
(e.g. Section~4.3.\cite{ghosal2017}).

Prior to introducing the main problems to
be addressed,  and the main contributions of this paper, we first lay the groundwork. The normalized generalized gamma (NGG) process is a random probability measure that was introduced and investigated by \cite{lijoi2007} and that has received considerable attention in recent years ( e.g., \cite{james2009,lijoi2010,barrios2013,favaro2016}). The NGG class includes the Dirichlet process, the stable process, and the normalized inverse Gaussian process as particular cases. NGGs are built by normalizing the generalized gamma process, and can be understood as a particular case of a normalized random measure with independent increments (NRMI) \cite{regazzini2003, lijoi2007, james2009, lijoi2010, barrios2013}. Some Bayesian nonparametric approaches, such as the NGG process, can be understood as an extension of standard parametric methods in the sense that they are centered a priori around a parametric model, $\{G_0 \equiv G_{0, \thetab}: \thetab \in \Theta \subseteq \mathbb{R}^q\}$, but assign positive mass to a variety of alternatives. Thus, a recurring theme in much of the Bayesian nonparametric literature is to regard a parametric approach---known as the baseline or centering distribution---as a reference, while allowing for deviations from it. See the monographs of \cite{muller2015} and \cite{ghosal2017} or the review paper of \cite{muller2013} for an introduction to Bayesian nonparametric inference.  

It is well-known that the tails of the Dirichlet process are exponentially much thinner than those of the baseline \cite[Section~4.2.3]{ghosal2017}. Motivated by this, this paper opens with the question of whether this is simply a property of the Dirichlet process or is more generally an attribute of the NGG process. 
Hence, the first contribution of this paper will focus on the characterization of the tails of the NGG process, and we will derive envelopes for the trajectories of the tail of the process. As will be discussed below (Section~\ref{pitman}), the latter envelopes combined with those of \cite{doss1982} offer a complete portrait of the tails of the NGG process. In addition, we then show that the tail of the NGG process is only  moderately thinner than that of the baseline, except for the Dirichlet process. In particular, our results imply that, with the exception of the Dirichlet process, the tail of the NGG process is heavy-tailed, provided that the baseline is itself heavy-tailed. This result sharply contrasts with the Dirichlet process, given that, as mentioned earlier, its tail is exponentially much lighter than that of the centering. Such property of the DP may seem unexpected, given that the process is centered around $G_0$. Yet, a notable example illustrates this: the tails of the DP centered around a Cauchy distribution are almost exponential, but $G_0$ has no mean \citep[][
Example~4.24]{ghosal2017}.

A second contribution of the paper rests on the study of two
classes of heavy-tailed mixture models and the assessment of their
relative merits. The heavy-tailed mixture models devised here have
links with the phase-type scale mixtures of \cite{bladt2018} and the
infinite mixtures of Pareto distributions of \cite{tressou2008}.  Our
focus differs, however,  from these papers in several important
ways.  Some key differences are that we take a general view of 
heavy-tailed NGG-mixtures and take advantage of our novel 
characterization of its tail. In addition, by keeping a
general focus in mind, our theoretical and numerical analyses will reveal that there are some good reasons for preferring NGG scale mixtures over NGG-mixtures built from heavy-tailed kernels. Finally, motivated by the fact that heavy-tailed data are frequently multivariate---and since covariates 
are often available---we further extend the proposed heavy-tailed scale mixture models to model these as well. 
In other words, multivariate extensions of the proposed heavy-tailed mixtures are also devised below, along with a predictor-dependent version to learn about the effect of covariates on a multivariate heavy-tailed response. Our theoretical and numerical analyses pinpoint a clear preference for NGG scale mixtures over Dirichlet process mixtures of heavy-tailed kernels; overall, NGG scale mixtures tend to have superior numerical performance in the bulk and tails.

A final comment on the jargon of heavy tails is in order. Following the standard convention in the literature on heavy tails \citep[e.g.,][]{resnick2007}, here we will characterize these via regular variation \citep{bingham1989}. A distribution
function $F(y) = P(Y \leq y)$, or its density $f = \dif F / \dif y$ in
case it exists, is said to have a regularly varying tail,
with tail index $\alpha \equiv \alpha(F) > 0$, if
\begin{equation}\label{rvdef} \lim_{y \to \infty} \frac{P(Y > yt)}{P(Y
> y)} = t^{-\alpha}.
\end{equation} The smaller the tail index, the slower the decay of the tail, $1
- F(y)$, to 0 as $y \to \infty$, and thus the more heavy-tailed is the
distribution. Throughout, the notation $1 - F \in \RV_{-\alpha}$ is
used to denote that $F$ verifies \eqref{rvdef}.

The remainder of this paper unfolds as follows. In Section~\ref{sect:the_model}, we study the tails of the NGG process
and construct two classes of heavy-tailed NGG mixture models. In Section~\ref{sect:extensions},  we expand the proposed toolbox to the multivariate setting as well as to a conditional framework. Section~\ref{simulation} illustrates the performance of the proposed methods and reports the main findings of our numerical studies. An application of the proposed methods to a neuroscience case study is given in Section~\ref{application}. Finally, in Section~\ref{discussion}, we present closing remarks. Proofs are available in the Appendix, and further technical details and supporting numerical evidence can be found in the online supplementary material; the R~package~\texttt{NGGR}, available from the supplementary material, implements instances of the methods proposed herein. \vspace{-0.4cm}

\section{Heavy-tailed NGG-mixture models}\label{sect:the_model}
\subsection{On the tails of the NGG process}\label{pitman}\vspace{+.1cm} 
\noindent \textbf{Subordinator representations}: 
We start by studying the tails of the NGG process since its properties will be vital for constructing our class of mixture models. The main result of this section is Theorem~\ref{tailNGG}, but before we can  discuss its implications, we first lay the groundwork. 
A subordinator, $\{S(t): t \geq 0\}$, is an increasing stochastic process over the positive real line that has independent and homogeneous
increments \cite[e.g.,][Chapter~1]{applebaum2009}. By the so-called L\'evy--Khintchine representation \citep[e.g.,][Section~1.2]{bertoin1999}, a subordinator is fully characterized by its Laplace exponent, 
\begin{equation*}
  \Phi(\lambda) = \texttt{k} + \texttt{d}\lambda + \int_{0}^{\infty} (1 - e^{-\lambda\, u}) \nu(\dif u),
  \quad \lambda \geq 0, 
\end{equation*}
that obeys $\E[\exp\{-\lambda S(t)\}] = \exp\{-t \Phi(\lambda)\}$, for $t \geq 0$; here, $\texttt{k} > 0$ is the killing rate, $\texttt{d} > 0$ is the drift coefficient, and $\nu$ is a measure on $(0, \infty)$, known as L\'evy measure, that governs the law of the increments and which obeys the constraint $\int_{0}^{\infty} \min(1, u) \, \nu(\dif u) < \infty$.

It is well known that the NGG process, introduced in \cite{lijoi2007}, inherently admits a subordinator representation due to its construction through the normalization of the generalized gamma process. A random probability measure $G$ follows an NGG process, here denoted as $G \sim \text{NGG}(M, \tau, D, G_0)$, if 
\begin{equation}\label{nngg}
  G(y) =\frac{S\{MG_0(y)\}}{S(M)}, \quad y \in \mathbb{R}.
\end{equation}
Here, $S$ is a generalized gamma subordinator, that is, $\texttt{k} = \texttt{d} = 0$ and $$\nu(\dif u) = D / \Gamma(1-D) u^{-1-D} \exp(-\tau u) \, \dif u,$$ for $u > 0$, where $\Gamma(z) = \int_0^{\infty} u^{z - 1} \exp(-u)\, \dif u$ is the gamma function. With a slight abuse of notation throughout, we use $G$ to denote both the random measure of an NGG process and its corresponding distribution function. 

The parameter $G_0$ of the NGG is known as the centering distribution function, and the other parameters are subject to the constraints $M > 0$, $\tau \geq 0$, and $0 \leq D < 1$. Particular cases include the Dirichlet process, NGG$(M, 1, 0, G_0)$, the stable process, NGG$(1, 0, D, G_0)$, and the normalized inverse Gaussian NGG$(1, \tau, 1/2, G_0)$ \citep[][]{lijoi2005, lijoi2007}; the Dirichlet process can also be obtained for any $\tau > 0$ and not just for $\tau = 1$, provided $D = 0$. Keeping this in mind, and the fact that in our context it will be important to separate the Dirichlet process from the other members of the NGG class, we introduce the following notation:
$$
\begin{cases}
\mathcal{D} = \{(M, \tau, D): M > 0, \tau > 0, D = 0\}, \\
\mathcal{N} = \{(M, \tau, D): M > 0, \tau \geq 0, D \in (0, 1)\}.  
\end{cases}
$$
The general subordinator representation of the NGG process is given in \eqref{nngg}. In the particular case $G \sim \NGG(M, \tau, D, G_0)$ with $(M, \tau, D) \in \mathcal{D}$, then 
\begin{equation}\label{repdp}
  G(y) = \frac{\gamma\{M G_0(y)\}}{\gamma(M)}, \quad y \in \mathbb{R},
\end{equation}
where $\gamma$ is a gamma process, that is, 
$\texttt{k} = \texttt{d} = 0$ and $\nu(\dif u) = u^{-1} \exp(-u) \, \dif u$, for $u > 0$.
Finally, we recall that if $G$ is a NGG process, then it admits a stick-breaking representation 
\begin{equation}\label{stick}
  G = \sum_{h=1}^{\infty} \pi_h \delta_{Y_h},  \quad Y_h \iid G_0,
\end{equation}
with a stick-breaking sequence $V_h$ as in \citet[Proposition 3]{favaro2016},  
so that  $\pi_1 = V_1$ and $\pi_h = V_h \prod_{k < h}(1 - V_k)$, for $h = 2, 3, \dots,$ and where $\delta_Y$ denotes a point mass at $Y$.

\noindent \textbf{On the tails of the NGG process}:
We now examine the behavior of $1 - G(y)$, as $y$ approaches the right endpoint, ${y^*} = \sup\{y: G(y) < 1\}$. Recall that both $G \sim \text{NGG}(M, \tau, D, G_0)$ and $G_0$ are supported over the same set, and thus the right endpoints of $G$ and $G_0$ coincide. Following the standard convention in extreme value analysis, we focus on the right tail, but all claims below apply to the left tail with minor adjustments. 

The tails of the Dirichlet process are much lighter than those of the centering distribution, with probability one, a fact that can be shown using the subordinator representation in~\ref{repdp}. Theorem~\ref{tailsdp} is well known since \cite{doss1982}, it formalizes the latter claim, and it is only included here for completeness.
\begin{theorem}[Tails of $\NGG$ in $\mathcal{D}$]\label{tailsdp} 
  Let $G(y)$ be the distribution of a $\NGG(M, \tau, D, G_0)$ process with $(M, \tau, D) \in \mathcal{N}$ and {with non-atomic $G_0$}. Then, 
  \begin{equation*}
    \begin{split}
    \lim \inf_{y \to {y^*}} \frac{1 - G(y)}{g_r\{1 - G_0(y)\}} = 
    \begin{cases}
      0, & \text{if }s < 1, \\
      \infty, & \text{if }s > 1, \\ 
    \end{cases} \quad a.s.,  \qquad \\
    \lim \sup_{y \to {y^*}} \frac{1 - G(y)}{h_r\{1 - G_0(y)\}} =
    \begin{cases}
      0, & \text{if }r > 1, \\
      \infty, & \text{if }r \leq 1, \\ 
    \end{cases} \quad a.s., \qquad
    \end{split}
  \end{equation*}
  with $g_r(t) = \exp\{-r\log|\log t|/t\}$ and $h_r(t) = \exp\{- 1 / (t |\log t|^{r})\}$, for $0 < t < 1$.
\end{theorem}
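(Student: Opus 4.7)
The plan is to reduce the statement to known almost-sure envelopes for a gamma subordinator near zero. Starting from the subordinator representation in Example~\ref{dp}, write $t := 1 - G_0(y)$, so that $t \to 0^+$ continuously as $y \to y_+$ (using that $G_0$ is non-atomic); then
\begin{equation*}
1 - G(y) = \frac{\gamma(M) - \gamma(M(1-t))}{\gamma(M)}.
\end{equation*}
Since $\gamma(M)$ is almost surely a finite positive random variable, it acts as an irrelevant positive constant for the stated $\liminf$/$\limsup$, and the task reduces to describing the almost-sure behaviour of the increment $W(t) := \gamma(M) - \gamma(M(1-t))$ as $t \to 0^+$.

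The next step is to invoke the time-reversal property of L\'evy processes on a compact interval: the map $u \mapsto \gamma(M) - \gamma(M-u)$ on $[0, M]$ is again a gamma subordinator. Thus, studying $W(t)$ near $0$ is tantamount to studying $\widetilde{\gamma}(Mt)$ near $0$ for some gamma subordinator $\widetilde{\gamma}$. The problem is therefore fully recast as one about the lower and upper envelopes of a gamma subordinator at the origin, a setting in which explicit integral tests are available.

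The final step is to apply those integral-test characterisations, essentially due to \cite{doss1982}. Concretely, along a geometric grid $t_n = \rho^n$, I would use the Gamma$(u, 1)$ density $x^{u-1}\e^{-x}/\Gamma(u)$ together with the small-shape asymptotic $\Gamma(u) \sim 1/u$ as $u \downarrow 0$ to estimate $\pr\{\widetilde{\gamma}(Mt_n) < g_s(t_n)\}$ and $\pr\{\widetilde{\gamma}(Mt_n) > h_r(t_n)\}$; the families $g_s$ and $h_r$ are tuned so that the corresponding series converge, respectively diverge, on either side of the critical exponents $s=1$ and $r=1$. A Borel--Cantelli argument, combined with the monotonicity of the subordinator to fill in between grid points, then delivers the claimed $0/\infty$ dichotomies.

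The main obstacle is this last step: producing tail bounds for $\widetilde{\gamma}(u)$ that are sharp enough to pin down the exact critical values $s=1$, $r=1$, and then carefully interpolating the Borel--Cantelli conclusions from the geometric grid to a continuous parameter without losing the delicate double-logarithmic factors built into $g_s$ and $h_r$. The preceding two steps are essentially bookkeeping using the subordinator representation in Example~\ref{dp} and time reversal.
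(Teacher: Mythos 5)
The paper itself offers no proof of this theorem: it simply defers to \cite{doss1982} and to Theorem~4.22 of \cite{ghosal2017}. Your outline---the gamma-subordinator representation of Example~\ref{dp}, time reversal to reduce $1-G(y)$ to the small-time behaviour of a gamma subordinator at the origin, and then Borel--Cantelli along a geometric grid with the small-shape asymptotics of the $\Ga(u,1)$ law pinning down the critical exponents $s=1$ and $r=1$---is a faithful reconstruction of exactly that cited argument, so it is essentially the same approach as the paper's.
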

\begin{proof}
See \cite{doss1982} or \citet[][Theorem~4.22]{ghosal2017}.
\end{proof}

\noindent The takeaway from Theorem~\ref{tailsdp} can be loosely summarized in bounds, for $r > 1$, a.s.~eventually for a large $y$, 
\begin{equation}\label{ineqdp}
  \resizebox{0.92\hsize}{!}{%
    $\exp\bigg[-\frac{r \log | \log M \{1 - G_0(y)\}|}{M \{1 - G_0(y)\}} \bigg]\leq 1 - G(y) \leq \exp\bigg[- \frac{1}{M \{1 - G_0(y)\} |\log M \{1 - G_0(y)\}|^r} \bigg],$%
  }
\end{equation}
where $G\sim \NGG(M, \tau, D, G_0)$ for $(M, \tau, D) \in \mathcal{D}$. Hence, the tails of the Dirichlet process are almost exponential, even if $G_0$ is heavy-tailed. Despite being one of the most popular Bayesian nonparametric priors, such deficiency of the DP rules out its use when the goal is to model heavy tails, extreme values, and risk. As shown next, what happens with the NGG process over $\mathcal{N}$ is substantially different as its tails are close to those of the baseline. 

\begin{theorem}[Tails of $\NGG$ in $\mathcal{N}$]\label{tailNGG} 
  Let $G(y)$ be the distribution of an $\NGG(M, \tau, D, G_0)$ process with $(M, \tau, D) \in \mathcal{N}$. Then, 
  \begin{equation*}
    \begin{split}
    &\lim \inf_{y \to y^*} \frac{1 - G(y)}{l\{M(1 - G_0(y))\}} = D(1 - D)^{(1 - D)/D}/S(M) 
      \quad a.s.,
      \\
    &\lim \sup_{y \to y^*} \frac{1 - G(y)}{u_r\{M(1 - G_0(y))\}} =
    \begin{cases}
      0, & r > 1, \\
      \infty, & r \leq 1, \\ 
    \end{cases} \quad a.s., \qquad
    \end{split}
  \end{equation*}
  with $l(t)  = t^{1/D}\log |\log t| /\{(\log |\log t|+\tau^Dt)^{1/D}-\tau t^{1/D}\}$, for $0 < t < e^{-1}$, and $u_{r}(t) = t^{1/D} |\log t|^{r/D}$, for $0 < t < e^{-r}$.
\end{theorem}
\noindent Theorem~\ref{tailNGG} warrants some remarks. A key takeway is that the tails of a random distribution following a $\NGG$ over $\mathcal{N}$ are almost as heavy as those of the centering, $G_0$. Indeed, the rough takeaway of Theorem~\ref{tailNGG} is that a.s.~eventually for a large $y$, 
\begin{equation}\label{ineqNGG}
\begin{split}
\frac{1}{S(M)} D(1 - D)^{(1 - D)/D}l(M\{1 - G_0(y)\}) \leq 1 - G(y) \leq u_r(M\{1 - G_0(y)\}),
\end{split}
\end{equation}
for $r > 1$ with $y > G_0^{-1}(1-e^{-r})$. 
Since $S(M)$ is random, the na\"ive lower bound in \eqref{ineqNGG} would vary with each realization of the subordinator. To mitigate this, we use well-known results on long-run behavior of subordinators  \citep[e.g.,][p.~92]{bertoin1996} which yield a.s.~eventually for a large $y$, 
\begin{equation}\label{ineqNGGnewbound}
\frac{D(1 - D)^{(1 - D)/D}}{(M^*)^{1/D}\log(M^*)^{r/D}}\, l(M\{1 - G_0(y)\}) \leq 1 - G(y) \leq u_r(M\{1 - G_0(y)\}),
\end{equation}
for $r > 1$, $M^*\gg e^{-r}$ and $M^* > M$, with $y > G_0^{-1}(1-e^{-r})$. Technical details on the derivation of \eqref{ineqNGGnewbound} are in the supplementary material (Section 2); this refined lower bound follows by determining an asymptotic upper bound for $S(M)$, for a large $M^* > M$ (e.g., in our experience setting $M^*=10$ already gives a reasonable bound, for any $M < 10$).  Numerical illustrations of the asymptotic envelopes in \eqref{ineqNGGnewbound} are presented in Figure~\ref{comparison} {for a specific instance of the NGG process in $\mathcal{N}$, the stable process}, and further examples are included in the supplementary material. Another implication of Theorem~\ref{tailNGG} is that if the tail of the centering distribution of NGG in $\mathcal{N}$ is heavy-tailed, then so will be that of the corresponding process, though with a lighter tail. 

\begin{corollary}[Stability of the heavy-tail property in $\mathcal{N}$]\label{cor} 
 If $G \sim \NGG(M, \tau, D, G_0)$, with $(M, \tau, D) \in \mathcal{N}$, and $G_0$ has a regularly varying tail, with tail index $\alpha_0 \equiv \alpha(G_0) > 0$, then $G$ has a regularly varying tail, with tail index $\alpha(G) = \alpha(G_0) / D$,  {almost surely}.
 \end{corollary}
 \noindent Corollary~\ref{cor} does not conflict with the full support of NGGs \citep[e.g.,][]{bissiri2014}. Indeed, while the NGG centered on a regularly varying tail is heavy-tailed with probability one, the support of the NGG remains the space of all probability measures---provided the baseline measure satisfies the requirements stated in \cite{bissiri2014}.

 Next, we compare Theorems~\ref{tailsdp} and \ref{tailNGG} in one example. 

\begin{example}[Pareto centering distribution: $\mathcal{D}$ \textit{versus} $\mathcal{N}$]\label{paretoex} \normalfont 
    Suppose first that $G \sim \text{NGG}(M, 1, 0, G_0)$ where $G_0$ is a standard Pareto distribution, that is, $1 - G_0(y) = 1 / y$, for $y > 1$. Then \eqref{ineqdp} yields
  \begin{equation}\label{ineqdpp_pareto}
    \exp[-r y/M \log|\log (M / y)|\,]\leq 1 - G(y) \leq \exp[-y/\{M |\log (M / y)|^r\}],
  \end{equation}
  for $r > 1$, and hence the tails of the Dirichlet process $G \sim \text{NGG}(M, 1, 0, G_0)$ are almost exponential, despite the fact that $G_0$ is heavy-tailed. Indeed, as shown in the supplementary material (Section~2), the two bounds in \eqref{ineqdpp_pareto} are in the Gumbel maximum domain of attraction, although $G_0$ is in the Fr\'echet maximum domain of attraction. Suppose now that $G \sim \text{NGG}(M, \tau, D, G_0)$, with $(M, \tau, D) \in \mathcal{N}$. Then,
  \begin{equation*}\label{ineqsp_pareto}
    \resizebox{0.95\hsize}{!}{%
      $\frac{\red{C}\,y^{-1/D}M^{1/D} \log |\log My^{-1}|}{(\log |\log My^{-1}|+\tau^DMy^{-1})^{1/D}-\tau (My^{-1})^{1/D}} \leq 1 - G(y) \leq y^{-1/D}M^{1/D}|\log My^{-1}|^{r/D},$%
    }
  \end{equation*}
  \red{for $C=1/\{(M^*)^{1/D}\log(M^*)^{r/D}\}$, $r > 1$, $M^*\gg0$ and $M^* > M$.}
  It follows from \red{Corollary~\ref{cor}} that $1 - G(y)$ is regularly varying at infinity with tail index $1/D$. Figure~\ref{comparison} illustrates that, for example, as predicted by Theorem~\ref{tailNGG}, the trajectories of the stable process, $\NGG(1, 0, 0.5, G_0)$, follow the asymptotic envelopes in \eqref{ineqsp_pareto}; in addition, the same figure illustrates that the envelopes of the Dirichlet process in \eqref{ineqdp} fall abruptly in comparison with those of the stable process.
  \end{example}

\begin{figure}
  \centering
  \includegraphics[scale = 0.5]{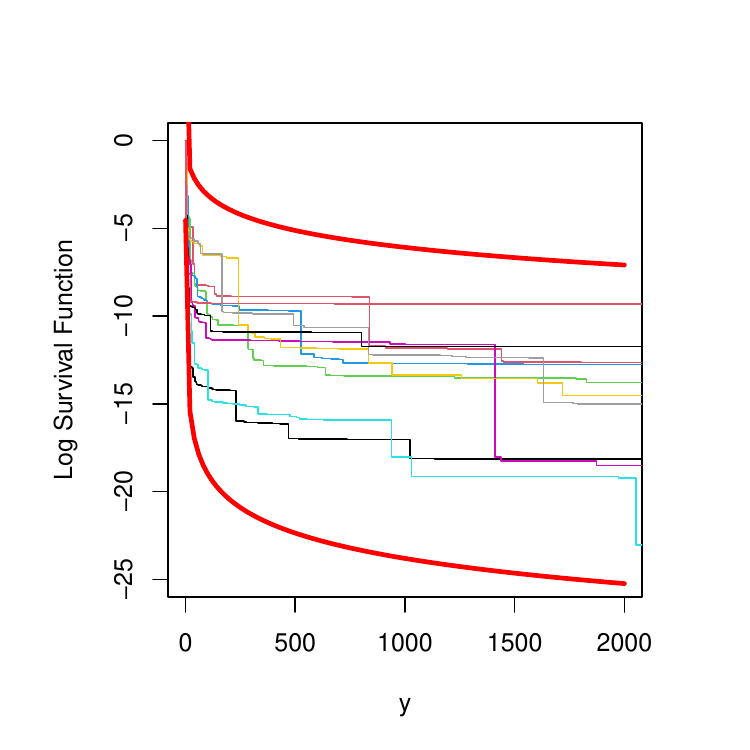} 
  \includegraphics[scale = 0.5]{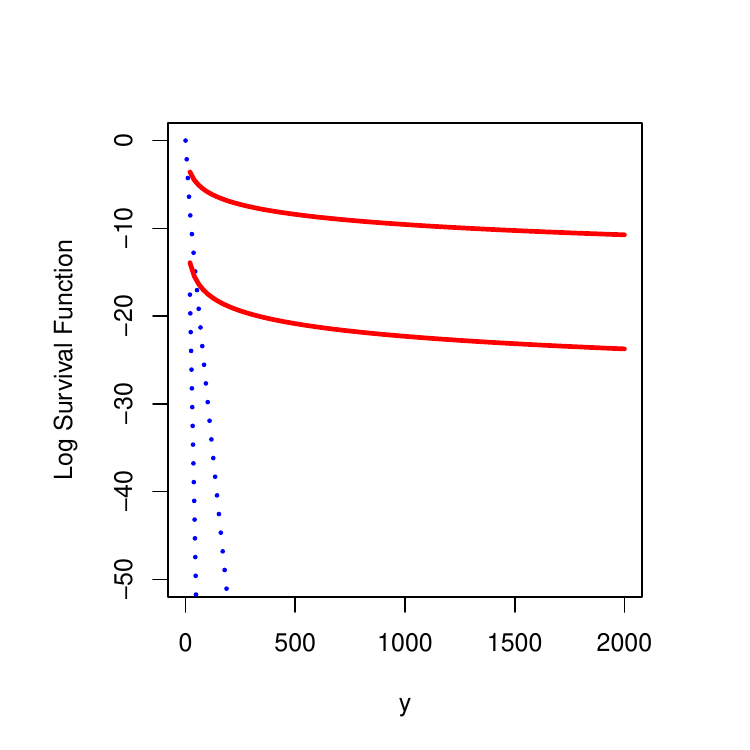}
  \caption{ \label{comparison} Asymptotic envelopes for Example~\ref{paretoex}. Left: Asymptotic envelopes that follow from Theorem~\ref{tailNGG} along with random trajectories of the log survival functions from a stable process $\NGG(1, 0, 0.5, G_0)$. Right: The same envelopes for log survival function of a stable process $\NGG(1, 0, 0.5, G_0)$ (solid) against those of a Dirichlet process $\NGG(1, 1, 0, G_0)$ (dashed); $G_0$ is the standard unit Pareto distribution.}
\end{figure}
\subsection{Heavy-tailed NGG-mixture models}\label{mixture}

Empowered by the main findings from Section~\ref{pitman}, this section shows that two classes of heavy-tailed NGG mixture models can be devised, and it discusses the relative merits of each option.
Below, we focus on univariate mixtures; comments on multivariate and conditional extensions are given in  Section~\ref{sect:extensions}.

\noindent \textbf{Heavy-tailed NGG-$\mathcal{N}$ scale mixtures}:  
The first class of mixture models to be considered is 
\begin{equation}\label{pymixtures}
  \begin{cases}
   f(y) = \int_{0}^{\infty} K_\sigma(y; \eta_\sigma) \, \dif G(\sigma), \\
   G \sim \text{NGG}(M, \tau, D, \red{G_0}), \quad (M, \tau, D) \in \mathcal{N}\\ 
   {1 - G_0(\sigma) = \frac{\mathscr{L}(\sigma)}{\sigma^{\alpha_0}}}, 
  \end{cases}
\end{equation}
with $y \in \mathbb{R}$ and $\alpha_0 \equiv \alpha(G_0) > 0$. Here, $K_\sigma(\cdot) = K(\cdot/\sigma; \eta_\sigma)/\sigma$ with $K$ being a kernel, $\sigma > 0$ is a scale parameter, $\eta_\sigma$ denotes additional parameters (possibly related with $\sigma$), and $\mathscr{L}$ is a slowly varying function, that is, $\mathscr{L}(yt) / \mathscr{L}(y) \to 1$ as $y \to \infty$ for any $t > 0$. \vspace{0.2cm}

\noindent \textbf{Heavy-tailed NGG shape mixtures}: The second class of mixture models to be considered is 
\begin{equation}\label{pymixtures2}
  \begin{cases}
   f(y) = \int_{0}^{\infty} \mathbb{k}(y; \alpha, \eta_\alpha) \, \dif G(\alpha), \\
    G \sim \text{NGG}(M, \tau, D, \red{G_0}),  \quad (M, \tau, D) \in \mathcal{N} \cup \mathcal{D}, \\
    1 - \mathbb{K}(y; \alpha, \eta_\alpha) = \frac{\mathscr{L}(y)}{y^{\alpha}},
  \end{cases}
\end{equation}
with $y \in \mathbb{R}$ and $\alpha \geq 0$.  Here, $\mathbb{k}$ is a Pareto-type kernel with distribution function $\mathbb{K}$, $\eta_{\alpha}$ denotes additional parameters (possibly related with $\alpha$), and $\mathscr{L}$ is a slowly varying function. \vspace{0.2cm}

\noindent The following theorem implies that NGG mixture models in \eqref{pymixtures} and \eqref{pymixtures2} are indeed heavy-tailed. In addition, it shows how their tail indices relate to that of the centering. Below, $(a)_+ = \max(a, 0)$ denotes the positive part function.

\begin{theorem}[Heavy-tailed NGG-mixtures]\label{props} The following results hold { for $F$ the distribution function of $f$ in \eqref{pymixtures} and \eqref{pymixtures2}}:
  \begin{enumerate}[a)]
  \item  {If \eqref{pymixtures} holds, with {$U \sim K_\sigma(\cdot; \eta_\sigma)$}, $\E(U_+^{\alpha_0}) < \infty$, $P(U_+ > \sigma) = o\{1 - G_0(\sigma)\}^{1/D}$ and $\lim \inf_{\sigma \to \infty} \mathscr{L}(\sigma) > 0$, then the tail of $f$ is regularly varying with tail index $\alpha(F) = \alpha_0 / D$, {almost surely}.}
  \item   If \eqref{pymixtures2} holds,   
 then the tail of $f$ is regularly varying with tail index $\alpha(F) = \inf\{\alpha: G_0(\alpha) > 0\}$, for any $G \sim \text{NGG}(D, M, G_0(\alpha))$, {almost surely}.
  \end{enumerate}
\end{theorem}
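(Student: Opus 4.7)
The plan is to leverage Theorem~\ref{cor} in part (a), and a direct mixture analysis together with a Tauberian argument in part (b).

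\textbf{Part (a).} Theorem~\ref{cor} ensures that $1-G\in \RV_{-\alpha_0/D}$ almost surely, and Theorem~\ref{tailssp} pins the slowly varying component of $1-G$ to $\mathscr{L}^{1/D}$ up to slowly varying corrections; write $1-G(y)=\tilde{L}(y)\,y^{-\alpha_0/D}$ for some (random) slowly varying $\tilde L$. Conditionally on $G$, $Y=\sigma U$ with $\sigma\sim G$ and $U$ from the base kernel independent of $\sigma$; by Fubini,
\begin{equation*}
1-F(y) = P(\sigma U > y) = \int_0^\infty [1-G(y/u)]\,\dif F_U(u).
\end{equation*}
Decompose the integral at a level $M=M(y)$ to be chosen, and treat the two pieces separately. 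The large-$U$ remainder $\int_M^\infty[1-G(y/u)]\,\dif F_U(u) \le P(U>M)$ is handled by the tail assumption $P(U_+>s)=o\{(1-G_0(s))^{1/D}\}$; choosing $M=M(y)$ so that $y/M(y)\to\infty$ and $P(U>M(y))=o(y^{-\alpha_0/D}\tilde{L}(y))$ is feasible because the hypothesis $\liminf_{\sigma\to\infty}\mathscr{L}(\sigma)>0$ aligns the slowly varying rates of $1-G_0$ and $\tilde L$. The small-$U$ bulk $\int_0^M[1-G(y/u)]\,\dif F_U(u)$ is handled by regular variation: Potter bounds on $1-G$ combined with $\E(U_+^{\alpha_0})<\infty$ yield a Breiman-type asymptotic of order $y^{-\alpha_0/D}\tilde L(y)$ up to a positive multiplicative constant. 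Combining the two pieces gives $\lim_{y\to\infty}(1-F(yt))/(1-F(y))=t^{-\alpha_0/D}$, so $1-F\in\RV_{-\alpha_0/D}$ almost surely, i.e., $\alpha(F)=\alpha_0/D$.

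\textbf{Part (b).} Since the atoms $\{\alpha_h\}_{h\ge 1}$ of $G\sim\PYP(D,M,G_0)$ are iid from $G_0$, $\inf_h \alpha_h = \alpha_{\min}:=\inf\{\alpha:G_0(\alpha)>0\}$ almost surely. Plugging the Pareto-type kernel into \eqref{pymixtures2} yields
\begin{equation*}
1-F(y) = \sum_{h=1}^\infty \pi_h\,\frac{\mathscr{L}(y)}{y^{\alpha_h}} = \mathscr{L}(y)\,y^{-\alpha_{\min}}\,\psi(y),\quad \psi(y):=\sum_{h=1}^\infty \pi_h\,y^{-(\alpha_h-\alpha_{\min})}.
\end{equation*}
A change of variable $s=\log y$ recasts $\psi(y)=\hat\mu(s)$ as the Laplace--Stieltjes transform of the push-forward $\mu$ of $G$ under $\alpha\mapsto\alpha-\alpha_{\min}$. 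By the Karamata--Tauberian theorem, slow variation of $\hat\mu$ at infinity is equivalent to the analogous behaviour of $\mu([0,u])$ as $u\downarrow 0$, which for the PYP is inherited almost surely from $G_0$. Thus $\psi$ is slowly varying at infinity, whence $1-F\in\RV_{-\alpha_{\min}}$ almost surely.

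\textbf{Main obstacle.} The critical subtlety in (a) is that the standard Breiman lemma demands $\E[U^{\alpha_0/D+\epsilon}]<\infty$, whereas only $\E[U^{\alpha_0}]<\infty$ is available and $\alpha_0<\alpha_0/D$; the condition $P(U_+>s)=o\{(1-G_0(s))^{1/D}\}$ is crafted precisely to close this gap, since it forces the truncation remainder to vanish at exactly the order of the target asymptotic. In (b), the main challenge is upgrading the straightforward log-log tail-index identification to genuine regular variation: this requires careful control of the random measure $G$ near the edge $\alpha_{\min}$ of its support, which ultimately boils down to the behaviour of $G_0$ near the boundary of its support.
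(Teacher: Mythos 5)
Part (a) of your proposal is essentially the paper's own argument: identify $f$ as the density of the product $U_+V$ with $V\sim G$, use Theorem~\ref{cor} to get $1-G\in\RV_{-\alpha_0/D}$ almost surely, and then run a Breiman-type transfer whose hypotheses are exactly the three conditions in the statement. The only difference is that you re-derive the transfer by truncating at a level $M(y)$ and using Potter bounds, whereas the paper simply cites the extended Breiman lemma of Denisov and Zwart (Lemma~\ref{breiman}); your ``main obstacle'' paragraph correctly explains why the classical Breiman lemma, which needs a moment of order strictly above the tail index of $V$, is unavailable and how the $o\{1-G_0(\sigma)\}^{1/D}$ condition closes the gap. (Both you and the paper treat $U$ and $\sigma$ as if independent even though $\eta_\sigma$ may depend on $\sigma$; that looseness is shared, not introduced by you.)

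Part (b) is where you genuinely diverge from the paper, and where there is a misstep. Writing $1-F(y)=\mathscr{L}(y)\,y^{-\alpha_{\min}}\,\hat\mu(\log y)$ with $\hat\mu$ the Laplace transform of $\mu=\sum_h\pi_h\delta_{\alpha_h-\alpha_{\min}}$ is a nice device, but the property you actually need is that $y\mapsto\hat\mu(\log y)$ is slowly varying \emph{in $y$}, i.e.\ $\hat\mu(s+c)/\hat\mu(s)\to1$ for each fixed $c$ --- not that $\hat\mu$ is slowly varying in $s$. The Karamata--Tauberian equivalence you invoke is aimed at the wrong target: slow variation of $\hat\mu(s)$ as $s\to\infty$ would force $\mu([0,u])$ to be slowly varying as $u\downarrow0$, a condition on $G_0$ near the left endpoint of its support that is neither assumed in the theorem nor true in typical cases (if $G_0$ has a positive density there, $\mu([0,u])$ behaves like a positive power of $u$ and $\hat\mu(s)$ decays like a negative power of $s$). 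Fortunately the weaker property is automatic: since the atoms are iid from $G_0$, the infimum of the support of $\mu$ is $0$ almost surely, hence $-\log\hat\mu(s)=o(s)$, and log-convexity of Laplace transforms then gives $\hat\mu(s+c)/\hat\mu(s)\to1$. So your conclusion stands, but for a different reason than the one you give, and without any appeal to the behaviour of $G_0$ at its boundary. The paper's own proof avoids transforms entirely: it orders the atoms and factors $1-F(y)=\mathscr{L}^*(y)/y^{\alpha_{i(1)}}$ with $\mathscr{L}^*(y)=\mathscr{L}(y)\{\pi_{i(1)}+\sum_{j\geq2}\pi_{i(j)}y^{-(\alpha_{i(j)}-\alpha_{i(1)})}\}$ declared slowly varying, which tacitly assumes the infimum is attained by some atom; your instinct that the edge case needs care is sound, but the fix you propose is not the right one.
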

\noindent Some comments on Theorem~\ref{props} are in order:
\begin{itemize}
\item Theorem~\ref{props} a) shows that if the centering of the NGG process is heavy-tailed, and if the tail of the kernel is not heavier than that of the mixing, then $f$ in \eqref{pymixtures} is heavy-tailed---with the same tail index as that of the mixing; that is, $\alpha(F) =  \alpha(G) = \alpha_0/D$ {with probability one}, where $\alpha_0 = \alpha(G_0)$. Interestingly, this result offers a partial answer to an insightful open problem raised by \citet[][Theorem~3.5]{li2019} on the range of $\alpha(F)$ under a polynomial decay of $G_0$. Indeed, Theorem~\ref{props} a) illustrates that under a polynomial decay of $G_0$, it holds that $\alpha(F) = \alpha_0/D$ differs from $0$ and $\infty$, almost surely, and thus $\alpha(F)$ for \eqref{pymixtures} does not concentrate on a singleton {when assigned hyperpriors on $D$ an $\alpha_0$}. The range of $\alpha(F)$ can be the positive real line, provided that prior densities of $D$ and $\alpha_0$ put positive mass on all $D$ and $\alpha_0$, that is, $\textsf{p}(D) > 0$ or $\textsf{p}(\alpha_0) > 0$.  
\item Theorem~\ref{props} b) is a folklore result that formalizes the idea that infinite shape mixtures of heavy-tailed kernels are themselves heavy-tailed, with a tail index equal to that of the heaviest component (i.e.,~equal to the left endpoint of the centering in the case of \eqref{pymixtures2}). While this would seem like a standard result, we could not find a formal statement of the result in the literature. As evident from the proof, the argument generally holds for any stick-breaking process $G$ and not just for NGG processes. 
\end{itemize}

\noindent For simplification, throughout, we refer to \eqref{pymixtures} as scale mixtures, but as it will be shown below,  \eqref{pymixtures} also includes scale--shape mixtures of light-tailed kernels. Next, we present instances of the specifications in \eqref{pymixtures} and \eqref{pymixtures2} that showcase the generality of the latter and how they relate to some mainstream approaches. 

\begin{example}[NGG scale mixtures with an Erlang kernel] \label{erlangex}\normalfont
  As an example of \eqref{pymixtures}, consider 
  \begin{equation}\label{erlang1}
    f(y) = \sum_{h=1}^{\infty} \pi_h \, \Er(y; a,  \sigma_h), 
  \end{equation}
  where $\Er(y; a, \sigma_h)$ is the density of the Erlang distribution with shape $a \in \mathbb{N}$ and scale $\sigma_h > 0$, and $\pi_h = V_h \prod_{k < h}(1 - V_k)$ with $V_h \overset {\text{ind}}{\sim} \Beta(1 - D, \red{h D})$, for $h \in \mathbb{N}.$ In the notation of  \eqref{pymixtures}, with $K(y ; \eta_\sigma) = \text{Er}(y; a, 1)$ and $\eta_\sigma = a$. For this kernel, the infinite mixture in \eqref{pymixtures} shows a connection with the phase-type scale mixtures of \cite{bladt2018} and \cite{ayala2022}. Two key differences are that: \emph{i}) since the mixing in \eqref{pymixtures} is over an NGG process, inference can be conducted by standard Bayesian nonparametric samplers, whereas fitting phase–type scale mixtures is far from  straightforward; \emph{ii}) Theorem~\ref{props} a) allows for a general kernel with a lighter tail than that of the mixing, whereas \cite{bladt2018} consider phase-type kernels. Another version of \eqref{pymixtures}, along the same lines as \eqref{erlang1}, which we have found to perform well in practice, is the following scale--shape mixture of NGG,
  \begin{equation}\label{erlang2}
    f(y) = \sum_{h=1}^{\infty} \pi_h \, \Er(y; \lceil \sigma_h \rceil, \sigma_h/\lambda),
  \end{equation} 
  where $\lceil \cdot \rceil$ is the ceiling function, and $K(y ; \eta_\sigma) = \text{Er}(y; \lceil \sigma \rceil, 1 / \lambda)$ with $\eta_\sigma = (\lceil \sigma \rceil, 1 / \lambda)^{\T}$ in the notation of \eqref{pymixtures}. We will revisit \eqref{erlang2} later in the paper.
\end{example}

\begin{example}[NGG shape mixtures with a Pareto-type kernel]\normalfont \label{exptk}
Since the Burr, $F$, and generalized Pareto distributions are particular cases of Pareto-type kernels (Table~\ref{exrvs}), the mixture model in \eqref{pymixtures2} includes as particular cases infinite mixtures of such distributions with a NGG mixing. In addition, \eqref{pymixtures2} also includes the Pareto kernel Dirichlet process mixtures of \cite{tressou2008}. \noindent  
\end{example}

\begin{table}
  \caption{\label{exrvs} Instances of Pareto-type kernels $\mathbb{k}$ following \eqref{pymixtures2}.} 
\centering
\footnotesize
\begin{center}
\begin{tabular}{lllll}
  \hline \hline
  \textbf{Distribution} & \textbf{Kernel} ($\mathbb{k}$) & \textbf{Slowly varying function} ($\mathscr{L}$) & \textbf{Tail index} ($\alpha_{\mathbb{K}}$) \\
  \hline
  Burr & $\propto y^{c - 1}/(1 + y^c)^{a + 1}$ & $\propto (y^{-c} + 1)^{-(a + 1)}$ & $ca$ \\
  $F$ & $\propto y^{a/2 - 1}(a + by)^{-(a + b)/2}$ & $\propto (a / y + b)^{-(a + b) / 2}$ & $b/2$ \\
  Generalized Pareto & $\propto (1 - ay/\sigma)^{1/a - 1}$& $\propto (1/y - k / \sigma)^{1/a - 1}$& -$1/a$  \\
  Pareto & $\propto y^{-(a+1)}$ & $\propto 1$ & $a$ \\  
  Student-$t$ & $\propto (1 + y^2/a)^{-(a + 1)/ 2}$ & $\propto (1 / y^2 + 1 / a)^{-(a+1) / 2}$ & $a$ \\
  \hline \hline 
\end{tabular}
\end{center}
\end{table}

There are some reasons for preferring the NGG scale mixtures in \eqref{pymixtures} over \eqref{pymixtures2}. In particular, scale mixtures of stable processes offer a more natural link between the tail of the centering and the tail of $F$ than \eqref{pymixtures2}. For example, if in \eqref{pymixtures} the centering is a Pareto Type II distribution over $(0, \infty)$ (i.e., $1 - G_0(\sigma) = (1 + \sigma)^{-\alpha_0}$) then $\alpha(F) = \alpha_0 / D$, and hence both the centering and $F$ are heavy-tailed. Yet, if the centering in \eqref{pymixtures2} is a Pareto Type II distribution over $(0, \infty)$ (i.e., $1 - G_0(\alpha) = (1 + \alpha)^{-\beta}$), then $\alpha(F) = 0$ and hence while the centering is heavy-tailed, $F$ is super-heavy tailed. Motivated by this, in the next sections, we will emphasize scale mixtures of NGG processes in $\mathcal{N}$, that is,  \eqref{pymixtures}; the only exception is Section~\ref{simulation}, where we will consider once more \eqref{pymixtures2} for comparison purposes.

\section{Consequences and extensions}\label{sect:extensions}
\subsection{Multivariate variants}\label{multivariate}
We now discuss how Section~\ref{mixture} can be extended to define priors on the space of multivariate heavy-tailed distributions, that is, the class of joint distributions with heavy-tailed marginals. This yields the following multivariate versions of \eqref{pymixtures} and \eqref{pymixtures2} for $\mathbf{y} \in \mathbb{R}^d$.

First, extending \eqref{pymixtures} consider the following multivariate heavy-tailed {NGG} scale mixture model: 
\begin{equation}\label{pymixtures_multi} 
   \begin{cases} f(\mathbf{y}) = \int_{\mathbb{R}^d_+} \mathbf{K}(\mathbf{y}; \boldsymbol{\eta_\sigma})\, \dif G(\boldsymbol \sigma),\\ 
 G  \sim \text{NGG}(M, \tau, D, \red{G_0}), \\ (M, \tau, D) \in \mathcal{N}, \\
     {1 - G_{0, k}(\sigma) = \frac{\mathscr{L}_k(\sigma)}{\sigma^{\alpha_{0, k}}}}, \\
 \end{cases}
\end{equation}
\noindent where $\boldsymbol{\sigma} = (\sigma_1, \dots, \sigma_d) \in \mathbb{R}_+^d$, $\etab_{\sigmab} = (\eta_{\sigma_1}, \dots, \eta_{\sigma_d})$ and $G_{0,k}(\sigma)$ is the $k$th marginal distribution of $G_0(\boldsymbol{\sigma})$, for $k=1,\dots, d$. In terms of the kernel for the mixture model in \eqref{pymixtures_multi}, $\mathbf{K}(\mathbf{y}; \boldsymbol{\eta_\sigma})$, we assume that its margins are given by a scale kernel, $K_{\sigma_k}(y_k; \eta_{\sigma_k})$, that is 
\begin{equation}\label{kerns}
  \int_{\mathbb{R}^{\red{d - 1}}_+} \mathbf{K}(\mathbf{y}; \boldsymbol{\eta_\sigma}) \, \dif \mathbf{y}_{-k} =
  K_{\sigma_k}(y_k; \eta_{\sigma_k}), 
\end{equation}
with $\dif \mathbf{y}_{-k} = \dif y_{1}\dots\dif y_{k-1} \dif y_{k+1}\dots\dif y_{d}$. An example of a kernel obeying \eqref{kerns} is the conditional independence kernel, that assumes conditional independence among components along with a common parameter shared by all components, and which is given by $$\mathbf{K}(\mathbf{y}; \etab_{\sigmab}) = \prod_{k = 1}^d K_{\sigma_k}(y_k; \eta_{\sigma_k});$$
the latter kernel follows from the principles in \citet[][Section~3.1]{sarabia2008}. 

Second, extending \eqref{pymixtures2}, consider the following multivariate heavy-tailed NGG shape mixtures:
\begin{equation}\label{pymixtures2_multi}
  \begin{cases}
   f(\mathbf{y}) = \int_{\mathbb{R}^d_+} \mathbf{\mathcal{K}}(\mathbf{y}; \alphab; \boldsymbol{\eta_\alpha})\, \dif G(\boldsymbol \alpha), \\
   G \sim \text{NGG}(M, \tau, D, \red{G_0}), \\ (M, \tau, D) \in \mathcal{D},
  \end{cases}
\end{equation}

where $\boldsymbol{\alpha} = (\alpha_1, \dots, \alpha_d) \in \mathbb{R}_+^d$, and $G_{0,k}(\alpha)$ will denote the $k$th marginal distribution of $G_0(\boldsymbol{\alpha})$, for $k=1,\dots, d$. In terms of the kernel, for the mixture model in \eqref{pymixtures2_multi}, we assume that its margins are heavy-tailed, that is
\begin{equation}\label{kerns2}
  \int_{\mathbb{R}^{\red{d - 1}}_+} \mathbf{\mathcal{K}}(\mathbf{y}; \alphab, \boldsymbol{\eta_\alpha}) \, \dif \mathbf{y}_{-k} =
  \mathbb{k}(y_k; \alpha_k, \eta_{\sigma_k}), \\
  \quad 1 - \mathbb{K}(y; \alpha_k, \eta_{\alpha_k}) = \frac{\mathscr{L}_k(y)}{y^{\alpha_k}},
\end{equation}
where $\mathscr{L}_k$ is a sequence of slowly varying functions. An example of a kernel obeying \eqref{kerns2} is the following product of standard Pareto distributions $\prod_{k = 1}^d \alpha_k/y^{\alpha_k + 1}$, also constructed according to \citet[][Section~3.1]{sarabia2008}.

\begin{theorem}[Multivariate heavy-tailed NGG-mixtures]\label{props_multi} 
The following results hold { for $F_k$ the distribution function of the $k$th marginal distribution of $f$ in \eqref{pymixtures_multi} and \eqref{pymixtures2_multi}}:
  \begin{enumerate}[a)]
  \item  {If \eqref{pymixtures_multi} and \eqref{kerns} hold, 
    { with $U_k \sim K_{\sigma_k}(\cdot;\eta_{\sigma_k})$, $\E(U_{k+}^{\alpha_{0, k}}) < \infty$, $P(U_{k+} > \sigma) = o\{1 - G_0(\sigma)\}^{1/D}$ and  $\lim \inf_{\sigma \to \infty} \mathscr{L}(\sigma) > 0$, then the $k$th marginal of $f$ has a regularly varying tail with tail index $\alpha(F_k) = \alpha_{0, k} / D$, almost surely for $k=1,\dots,d$.}}
  \item If \eqref{pymixtures2_multi} and \eqref{kerns2} hold,   
    then {the $k$th marginal of $f$ has a regularly varying tail with tail index $\alpha(F_k) = \inf\{\alpha: G_{0, k}(\alpha) > 0\}$, for any $G \sim \text{NGG}(D, M, G_0(\alphab))$, almost surely.}
  \end{enumerate}
\end{theorem}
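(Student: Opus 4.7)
The plan is to reduce Theorem~\ref{props_multi} to the univariate Theorem~\ref{props} by marginalizing each coordinate of the mixture. The key structural observation is that for any $G \sim \text{PYP}(D, M, G_0(\boldsymbol{\sigma}))$ with multivariate centering $G_0$ on $\mathbb{R}_+^d$, its $k$th coordinate projection is itself a Pitman--Yor process with centering equal to the $k$th marginal $G_{0,k}$. Indeed, write the stick-breaking representation $G = \sum_{h \geq 1} \pi_h \delta_{\boldsymbol{\sigma}_h}$ with $\boldsymbol{\sigma}_h \iid G_0$ and weights $\pi_h$ as in \eqref{stick}. Pushing forward $G$ by the projection $\boldsymbol{\sigma} \mapsto \sigma_k$ preserves the weights $\pi_h$ and replaces the atoms by $\sigma_{h,k} \iid G_{0,k}$, so the projected random measure has the same stick-breaking form and therefore the same distribution as a $\text{PYP}(D, M, G_{0,k})$ (and analogously with $M = 0$ in the stable-process case).

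Given this reduction, for part~a) I would integrate out the coordinates $y_j$ with $j \neq k$ in \eqref{pymixtures_multi} using Fubini. Because the kernel factorizes as $\prod_{j=1}^d K_{\sigma_j}(y_j;\eta_{\sigma_j})$ and each factor integrates to one in $y_j$, the $k$th marginal density becomes
\begin{equation*}
f_k(y_k) \;=\; \int_{\mathbb{R}_+} K_{\sigma_k}(y_k;\eta_{\sigma_k}) \, \dif G_k(\sigma_k),
\end{equation*}
where $G_k \sim \text{PYP}(D, 0, G_{0,k})$ by the projection argument above. The hypotheses of Theorem~\ref{props}~a) are, by assumption, inherited by the $k$th coordinate: $\E(U_{k+}^{\alpha_{0,k}}) < \infty$, the kernel tail is dominated by the mixing tail, $\liminf_{\sigma \to \infty} \mathscr{L}_k(\sigma) > 0$, and $1 - G_{0,k}(\sigma) = \mathscr{L}_k(\sigma)/\sigma^{\alpha_{0,k}}$ is regularly varying of index $-\alpha_{0,k}$. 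Applying Theorem~\ref{props}~a) to this univariate stable process scale mixture yields that $F_k$ is regularly varying with tail index $\alpha(F_k) = \alpha_{0,k}/D$, almost surely.

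For part~b) the same marginalization applied to \eqref{pymixtures2_multi} gives
\begin{equation*}
f_k(y_k) \;=\; \int_{\mathbb{R}_+} \mathbb{k}(y_k;\alpha_k,\eta_{\alpha_k}) \, \dif G_k(\alpha_k), \qquad G_k \sim \text{PYP}(D, M, G_{0,k}),
\end{equation*}
so that the $k$th marginal falls exactly in the framework of \eqref{pymixtures2}. Invoking Theorem~\ref{props}~b) then delivers $\alpha(F_k) = \inf\{\alpha : G_{0,k}(\alpha) > 0\}$ almost surely.

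The main obstacle is really just the projection lemma for Pitman--Yor processes with multivariate centering; once this is justified (as sketched above via stick-breaking), the rest is a direct application of the univariate results and no new tail analysis is needed. A small point to be careful about is that the exceptional null set in Theorem~\ref{props} depends on $k$, so for the joint almost-sure statement over all $k = 1,\ldots,d$ one uses that a finite union of null sets is null, which is immediate.
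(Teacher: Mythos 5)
Your proposal is correct and follows essentially the same route as the paper: marginalize out the other coordinates via Fubini, observe that the $k$th marginal is a univariate Pitman--Yor mixture, and invoke Theorem~\ref{props}. Your explicit stick-breaking justification that the coordinate projection of a $\text{PYP}(D,M,G_0(\boldsymbol\sigma))$ is a $\text{PYP}(D,M,G_{0,k})$ is a welcome clarification of a step the paper leaves implicit, but it is not a different argument.
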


\noindent The previous result naturally extends Theorem~\ref{props} to the multivariate setting. 
\noindent 
\begin{remark}\label{remarkmodel} \normalfont Some comments on the multivariate heavy-tailed stable process scale mixtures in \eqref{pymixtures_multi} are in order:
\begin{itemize}
    \item A concrete instance of  \eqref{pymixtures_multi} and \eqref{kerns} that we have found to work well in practice is the following extension of Example~\ref{erlangex}, 
\begin{equation}\label{erlangmulti}
  f(\mathbf{y}) = \sum_{h=1}^{\infty} \pi_h \,\prod_{k=1}^{d} \Er(y_k; \lceil \sigma_{k,h} \rceil, \sigma_{k,h} / \lambda), 
\end{equation}
where once more $\Er(y; a, b)$ is the density of the univariate Erlang distribution and $\pi_h = V_h \prod_{k < h}(1 - V_k)$ with $V_h \overset {\text{ind}}{\sim} \Beta(1 - D,  h D)$, for $h \in \mathbb{N}$.
\item The specification in \eqref{pymixtures_multi} requires that the centering is itself a multivariate heavy-tailed distribution. These can be easily constructed from copulas \citep{nelsen2006}, and we recommend opting for Pareto Type II margins; that is, for $k = 1, \dots, d$,  
\begin{equation}\label{distributionPareto}
\begin{aligned}
1 - G_{0,k}(\sigma)&=
\left(1+\frac{\sigma}{\beta}\right)^{-\alpha_{0, k}}, 
\end{aligned}
\end{equation}
with $\sigma \geq 0$, $\alpha_{0, k} > 0$, and $\beta>0$. Such margins for the centering are convenient since they lead to a closed-form posterior of the extreme value index, as can be seen from the supplementary material (Section~3).
\item To complete \eqref{pymixtures_multi} we recommend a Jeffrey's prior on the tail index $\textsf{p}(\alpha_{0,k}) \propto 1/\alpha_{0,k}$ and a Beta prior on the discount parameter, $D\sim\mbox{Beta}(a_D,b_D)$. Finally, one may set a prior on the remainder parameters of the kernel, and for instance, in the Erlang kernel example in \eqref{erlangmulti}, we will opt for $\lambda\sim\text{Gamma}(a_{\lambda}, b_{\lambda})$. {This implies a prior for the tail index $\alpha_0/D$, with infinite expectation. Alternatively, with a proper prior on $\alpha_0$, e.g., $\alpha_0\sim\Gamma(a_{\alpha_0},b_{\alpha_0})$, the induced prior on the tail index would have prior expectation  $E(\alpha_0)E(1/D)=\{a_{\alpha_0}(a_D+b_D-1)\}/\{b_{\alpha_0}(a_D-1)\}.$}
\item The specifications in \eqref{pymixtures_multi} and \eqref{kerns}---as well as \eqref{pymixtures2_multi} and \eqref{kerns2}---can be used to model heavy-tailed partially exchangeable signals by considering a single parameter in the mixing. Given the prominence of partial exchangeability in Bayesian Nonparametric literature, we believe this to be an interesting feature of this specification \citep{camerlenghi2019b, camerlenghi2019a, catalano2021}.
\end{itemize}
\end{remark}

\subsection{Modeling conditional joint densities}\label{covariates}\label{conditional}

We now show how to extend the proposed models to include the effect of covariates 
by using a particular NGG process---the stable process. Specifically, a single-atoms dependent stable process is constructed following the principles from \citet[][Definition~3]{barrientos2012} and \citet[][Section~2.3]{quintana2022}. For conciseness, we focus on multivariate heavy-tailed NGG scale mixtures in \eqref{pymixtures_multi}, but the principles discussed below can be easily adapted for the multivariate shape mixtures from Section~\ref{multivariate} as well as to the univariate methods from Section~\ref{sect:the_model}. Consider the following predictor-dependent model,  
\begin{equation}\label{eq:modelcov}
f(\mathbf{y} \mid \mathbf{x})= \int_{\mathbb{R}^d_+}~\prod_{k=1}^{d} K_{\sigma_k}(y_k; \eta_{\sigma_k})\, \dif G_{\mathbf{x}}(\boldsymbol \sigma),
\end{equation}
where $\mathbf{y}, \boldsymbol \sigma \in \mathbb{R}^d$ and $\{G_{\mathbf{x}}\}$ is a family of random probability measures indexed by a covariate $\mathbf{x} \in \mathbb{R}^p$. Specifically, we consider the following dependent stable process 
 \begin{equation}\label{eq:discrete_measurecov}
   G_{\mathbf{x}} = \sum_{h=1}^{\infty} \pi_h({\mathbf{x}}) \delta_{\boldsymbol{\sigma}_h},
   \qquad \boldsymbol\sigma_h\iid G_0(\boldsymbol\sigma). 
 \end{equation}
Here, the weights of the stick-breaking representation and the discount parameter $D$ of the NGG process are indexed over the covariate as follows, $\pi_{h}(\mathbf{x}) =V_h(\mathbf{x})\prod_{k<h}\{1-V_k(\mathbf{x})\}$, and
\begin{equation}\label{vds}
V_h(\mathbf{x})\sim\text{Beta}(1-D_h(\mathbf{x}),hD_h(\mathbf{x})), \qquad 
D_h(\mathbf{x}) = \frac{e^{\mathbf{x}^{\T}\boldsymbol\beta_h}}{1+ e^{\mathbf{x}^{\T}\boldsymbol\beta_h}},
\end{equation}
where $\boldsymbol\beta_h$ is a parameter in $\mathbb{R}^p$. Clearly, since \eqref{eq:discrete_measurecov} is a NGG for every $\mathbf{x}$, Theorem~\ref{props_multi}~a) implies that the joint density mixture model in \eqref{eq:modelcov} yields a  multivariate  heavy-tailed distribution, for every $\mathbf{x}$. The model is completed with a prior distribution for $\boldsymbol\beta_h$, given by $\boldsymbol{\beta}_h \iid N_p(\boldsymbol{0},s^2 \boldsymbol{I})$, for $h \in \mathbb{N}$.
A specific embodiment of the approach discussed in this section, that will be revisited later in the paper, is the following extension of Example~\ref{erlangex}, 

\begin{equation}\label{erlangmulticov}
    f(\mathbf{y} \mid \mathbf{x}) = \sum_{h=1}^{\infty} \pi_h(\mathbf{x}) \,\prod_{k=1}^{d} \Er(y_k; \lceil \sigma_{k,h} \rceil, \sigma_{k,h} / \lambda).
  \end{equation}

\section{Simulation study}\label{simulation}
\subsection{Simulation scenarios and preparations}\label{oneshot}
This section describes the true data-generating processes and the settings used over the Monte Carlo simulation study from Section~\ref{mc}.  \vspace{0.2cm}

\noindent \textbf{Data-generating processes}: We consider one scenario for the univariate version of the model from Section~\ref{mixture}, three scenarios for the multivariate version from Section~\ref{multivariate}, and three scenarios for the multivariate conditional version from Section~\ref{conditional}. The univariate scenario is a standard unit Pareto distribution with tail function $1-F(y)=1 / y$, for $y > 1$, and its main aim will be to highlight that for heavy-tailed data, NGG-$\mathcal{N}$ mixing leads to much better fits at the tails than Dirichlet process mixing. Beyond the univariate scenario, we also considered bivariate and conditional scenarios that contemplate different dependence levels and complexities of the marginals. Table~\ref{table:sim} summarizes the bivariate and conditional scenarios, which are marginally characterized by 
\begin{equation}\label{eq:marginals_simulation}
  \begin{cases}
    f_k(y)=w\,f_{\text{LG}}\{y\mid a_1, b_1\}+(1-w)\,f_{\text{LG}}\{y\mid a_2,b_2\}, \\
    f_k(y\,|\, x)=w\,f_{\text{LG}}\{y\,|\, a_1(x),b_1(x)\}+(1-w)\,f_{\text{LG}}\{y\,|\, a_2(x),b_2(x)\}, 
  \end{cases}
\end{equation}
for $k=1,2$, where $f_{\text{LG}}(y; a, b)=b^a\log(y)^{a-1}y^{-(b+1)}/\Gamma(a)$ is the density of a log-gamma distribution with shape $a>0$ and rate $b>0$; parenthetically, we note that the log-gamma distribution is in the Fr\'{e}chet domain of attraction with tail index $b$ \citep[][Table 2.1]{beirlant2004}.

The dependence is modeled via a Gumbel copula, so that data for the bivariate and conditional scenarios are respectively simulated from  
\begin{equation}\label{eq:copula_simulation}
  \begin{cases}
    F(y_1, y_2)=C_\theta\{F_1(y_1),F_2(y_2)\}, \\
    F(y_1, y_2\mid x)=C_\theta\{F_1(y_1\mid x),F_2(y_2\mid x)\}.
\end{cases}
\end{equation}
Here, $C_{\theta}(u, v) = \exp[-\{(-\log u)^\theta + (-\log v)^\theta\}^{1/\theta}]$, for $(u, v) \in (0, 1)^2$, whereas $\theta \geq 1$ is the parameter controlling dependence, and $F_1$ and $F_2$ are the distribution functions of $f_1$ and $f_2$. For all scenarios,  we have simulated $n = 1\,000$ observations, and for the conditional scenarios, covariates were drawn from a standard uniform distribution.

\begin{table}[H]
\centering
\caption{Bivariate and Conditional Simulation Scenarios. The marginals are mixtures of log-gamma distributions as in \eqref{eq:marginals_simulation}, and dependence is set by a Gumbel copula with parameter $\theta$.}
\begin{center}
\begin{tabular}{lllc}
    \hline \hline 
    \textbf{Scenario} & \textbf{Marginal} $(f_{1})$  & \textbf{Marginal} $(f_{2})$  & \textbf{Copula} $(\theta)$ \\\hline 
    ~~~Bivariate~1 & $a_1=a_2=5;w=1$   & $a_1=a_2=5;w=1$   & 3 \\ \hline 
    \phantom{~~~Bivariate}~2 & $a_1=a_2=5;w=1$   & $a_1=a_2=5;w=1$   & 1   \\ \hline 
  \phantom{~~~Bivariate}~3 & $a_1=13;b_1=7;$   & $a_1=8;b_1=7;$  &  \multirow{2}{*}{1}\\ 
               &  $a_2=10; b_2=8;w=.4$  & $a_2=15; b_2=8;w=.4$  &  \\ \hline \hline 
  Conditional~1   & $a_1=1+4x;a_2=3;w=1$    & $a_1=1+4x;a_2=3;w=1$    & 1 \\ \hline 
  \phantom{Conditional}~2   & $a_1=1+4x;a_2=3;w=1$    & $a_1=1+4x;a_2=3;w=1$    & 3\\ \hline 
  \phantom{Conditional}~3  & $a_1=11+5x;b_1=8+5x;$ & $a_1=6+5x;b_1=12+5x;$   &  \multirow{2}{*}{1}\\ 
                              & $a_2=b_2=7; w=.4$  &  $a_2=b_2=8;w=.4$& \\ \hline \hline    
\end{tabular}
\end{center}
\label{table:sim}
\end{table}

\noindent \textbf{MCMC and model specification}: 
All models were fitted using {an efficient slice sampler} \citep{kalli2011} available from the supplementary material (Section~3). We considered a burn-in period of 5\,000 iterations, and after that,  scanned 5\,000 samples from the posterior targets of interest.
For the univariate scenario, we fitted a particular NGG-$\mathcal{N}$ mixture; that is, we fit a stable process scale mixture with an uninformative gamma prior and an Erlang kernel (i.e., \eqref{erlang2} with prior $\lambda \sim \Ga(0.1, 0.1)$); 
for the bivariate and conditional scenarios we fitted stable process scale mixtures with an Erlang kernel based on Remark~\ref{remarkmodel} and Equation~\eqref{erlangmulticov}, respectively. For the latter, a Gumbel copula was used for the centering, and empirical Bayes was used to set the hyperparameter for $\theta$ via maximum likelihood. For the conditional version of the model in the regression parameters in \eqref{vds}, we consider the prior $\boldsymbol{\beta}_h \iid N_p(\boldsymbol{0},s^2 \boldsymbol{I})$, and set the hyperparameter to be $s^2=100$. For the hyperparameters of the marginals Pareto for the base measure, we set $\beta_k=1$ and tail index $\alpha_{0, k} = 2$, for $k = 1, 2$, which implies that both margins are apriori heavy-tailed but have a finite expected value. Finally, we have assigned the prior  $D \sim \Beta(0.5, 0.5)$ to the discount parameter for all instances of the model. Keeping in mind space constraints and the preference for stable process scale mixtures noted in Section~\ref{mixture}, here we mainly concentrate on assessing the performance of the latter. The posterior inference algorithms available from the supplementary material (Section~3) can, however,  be used for fitting multivariate as well as conditional heavy-tailed NGG shape mixtures, and some instances of the latter are available from the \texttt{NGGR}~package. \vspace{0.2cm}

\noindent \textbf{One-shot experiments}: 
One-shot experiments for the bivariate and conditional scenarios are presented in the supplementary material (Section 3). All in all, the resulting fits suggest that the proposed methods accurately recover the true distribution for all scenarios being examined. Such findings should, of course, be regarded as tentative, as they are the outcome of a single-run experiment and will be subject to the scrutiny of the Monte Carlo simulation study in the next section.

\begin{figure}
\begin{center}
 \includegraphics[scale=0.45]{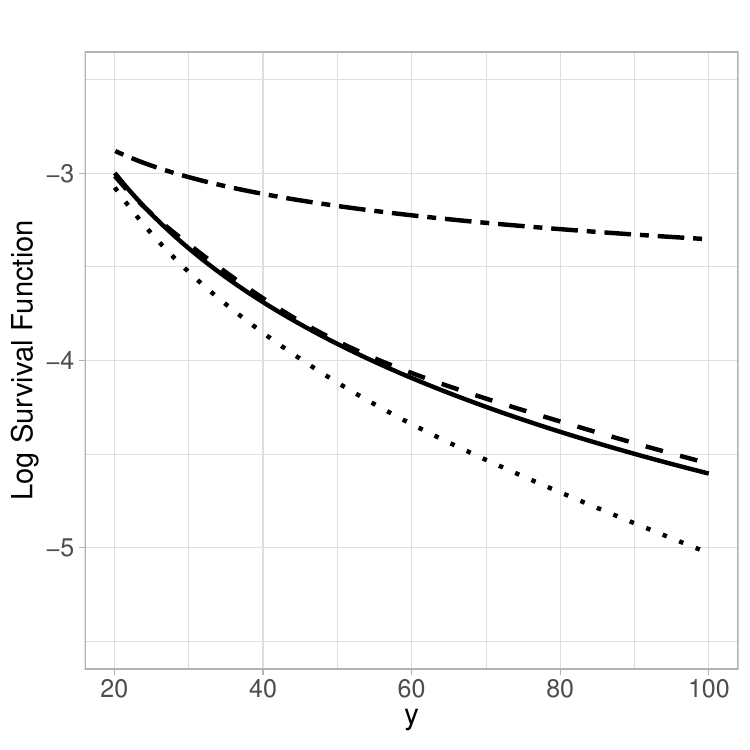} \footnotesize
\caption{\label{fig:mcmcuniv}
  Mean of the Monte Carlo fits (dashed line) for the log-survival function for the  univariate scenario obtained using a particular NGG-$\mathcal{N}$ mixture model (i.e., stable process scale mixture) from Section~\ref{mixture}, focusing from the $95\%$ to the $99\%$ quantile, plotted against the true (solid line). The dotted line shows the Monte Carlo mean of the fits from an NGG-$\mathcal{D}$ mixture with the same Erlang kernel, whereas the dashed--dotted line shows the Monte Carlo mean of the fits from an NGG-$\mathcal{D}$ mixture with a Pareto kernel and a gamma centering distribution.}
\end{center}
\end{figure}

\subsection{Monte carlo simulation study}\label{mc} 
We now report the main findings of a Monte Carlo simulation study. For each scenario from  Section~\ref{oneshot}, we simulated 100 data sets,  each containing $n = 1\,000$ observations. All models have been fitted using stable process scale mixture models with the same specifications and MCMC settings described in Section~\ref{oneshot}. We start with the univariate unit Pareto scenario, which will reinforce our preference for NGG-$\mathcal{N}$ scale mixture models.

In Figure~\ref{fig:mcmcuniv},  we present the posterior Monte Carlo means of the log-survival estimates for the tail of the univariate scenario and compare it with the corresponding Monte Carlo mean for a Dirichlet process mixture based on the same kernel. As can be seen from Figure~\ref{fig:mcmcuniv}, a stable process mixture accurately estimates the tail, whereas a Dirichlet process mixture markedly underestimates it; this numerical evidence showcases that the proposed stable process scale mixtures are a natural option for modeling risk and extremes in a heavy-tailed framework. Such numerical performance of the proposed methods finding is not surprising in light of Theorem~\ref{props}~a); the performance over the bulk (not shown) is comparable for both forms of mixing. Interestingly, Figure~\ref{fig:mcmcuniv} also reveals that the Monte Carlo mean based on fitting the Dirichlet process mixture with a Pareto kernel and a gamma-centering distribution overestimates the tail of the distribution. Such numerical finding is not surprising, keeping in mind Theorem~\ref{props}~b), given that the left endpoint of the gamma centering distribution function is 0 and hence the resulting mixture is super heavy-tailed (i.e., $\alpha(F) = 0$). Next, we move to the bivariate and conditional scenarios from Section~\ref{oneshot}. Figure \ref{fig:mcmc} shows 100 posterior estimated contours for the three bivariate scenarios. As can be seen from the latter figure, the proposed stable process scale mixture model can capture the true contours over different levels of dependence (Scenarios 1--2) and even with challenging marginals such as mixtures (Scenario~3). The results for the conditional scenarios are presented in the supplementary material (Section 4) and also suggest an overall good performance of the proposed methods.

\begin{figure}
\begin{center}
\hspace{-.8cm}\includegraphics[scale=0.35]{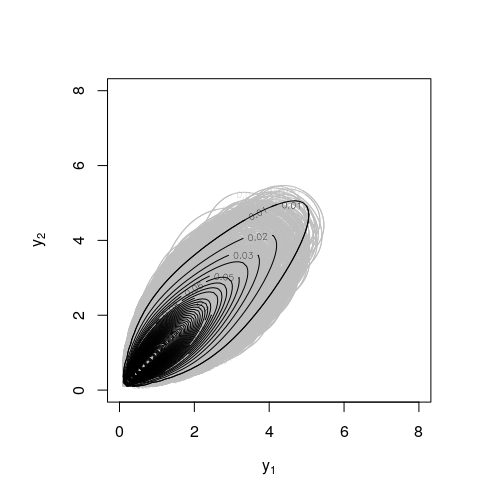}\hspace{-.6cm}\includegraphics[scale=0.35]{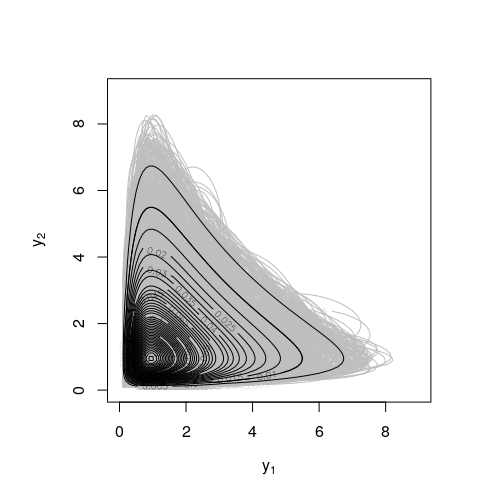}\hspace{-.6cm}\includegraphics[scale=0.35]{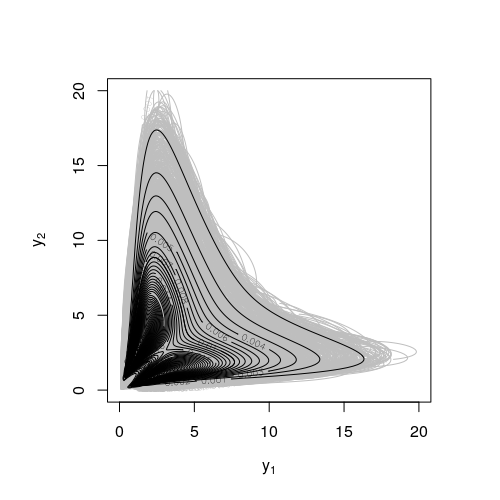}\vspace{-0.4cm}
\caption{ \label{fig:mcmc} Monte Carlo Simulation for the Bivariate Scenarios 1--3: Contours of the joint density estimates (gray) obtained with proposed stable process scale mixture model from Section~\ref{multivariate}, for the 100 simulated data sets, plotted against the true (black).}
\end{center}
\end{figure}

\section{Application to heavy-tailed brain data}\label{application}
\subsection{Applied context and data description}\label{data}

We now showcase the application of the proposed methods to a neuroscience case study. Brain rhythm signals are key for understanding how the human brain works; loosely speaking, they consist of patterns of neuronal activity that are believed to be linked with certain behaviors, arousal intensity, and sleep states \citep{Frank2009}. These signals are typically measured using an electroencephalogram (EEG), which records electrical activity in the brain via electrodes attached to the scalp. An EEG signal tracks the activity of billions of neurons, and such signals cover a broad spectrum of frequency bands. Say, the alpha band typically refers to 8--13Hz, while beta refers to 13--20Hz; for a primer on brain rhythms and EEG signals, see, for instance, \cite{buzsaki2006} and \citet[][Chapter~7]{ombao2016}.

Alpha and beta rhythms are believed to be heavy-tailed \citep[e.g.,][]{roberts2015}. Hence, the main goal of our analysis will be to learn about the marginal and joint distribution of these heavy-tailed oscillations, given a variety of stimuli to be described below. 
In the supplementary material (Section~5), we report evidence supporting the claim that in line with \cite{roberts2015} our alpha and beta brainwave data are indeed heavy-tailed. We assess this by learning from data about the so-called extreme value index of a generalized Pareto distribution---which is known to be positive for heavy-tailed data \citep[][Section~4]{coles2001}. The data to be analyzed are available from the R package~\texttt{NGGR}, and were gathered from a UC Berkeley study that involved 30 participants who were subject to several audio-visual activities and stimuli, namely: {\it mathematics}, {\it relaxation}, {\it music}, {\it color}, {\it video} as well as  {\it relax and think}.

\begin{figure}[H]
{\centering \footnotesize \textbf{Mathematics} \hspace{5.3cm} \textbf{Relaxation}}  \\
\hspace*{-.5cm} \includegraphics[scale=0.268]{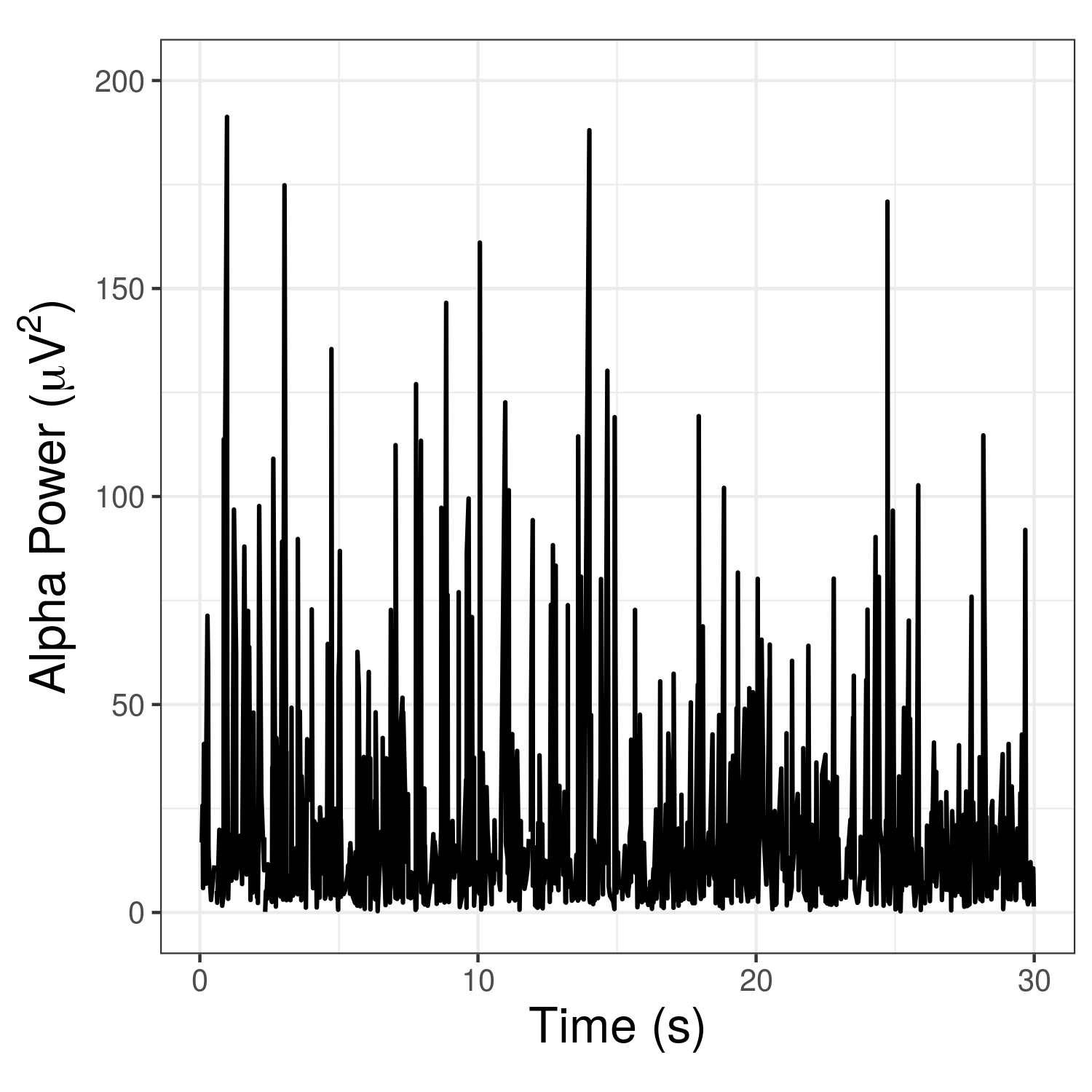}\includegraphics[scale=0.268]{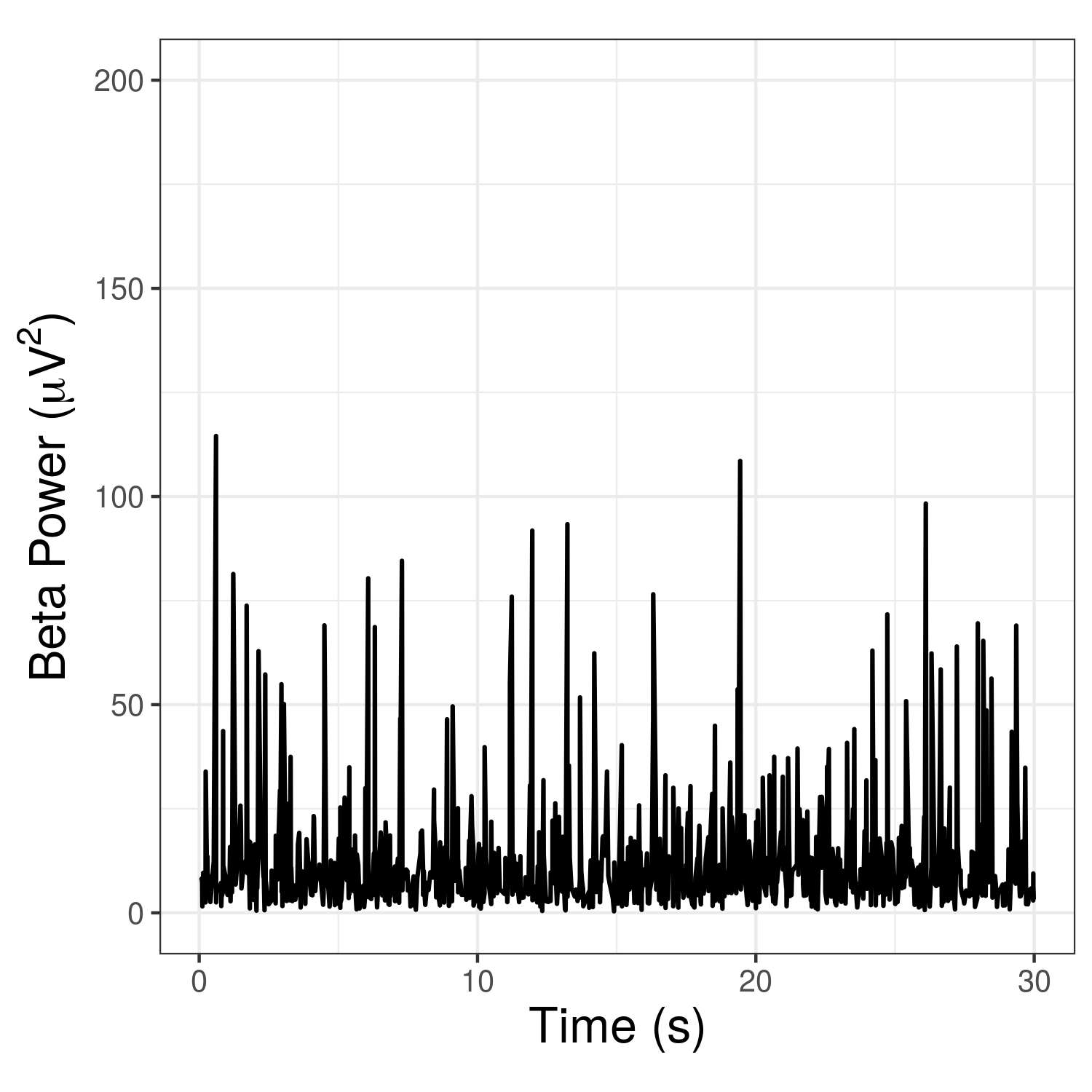}
\includegraphics[scale=0.268]{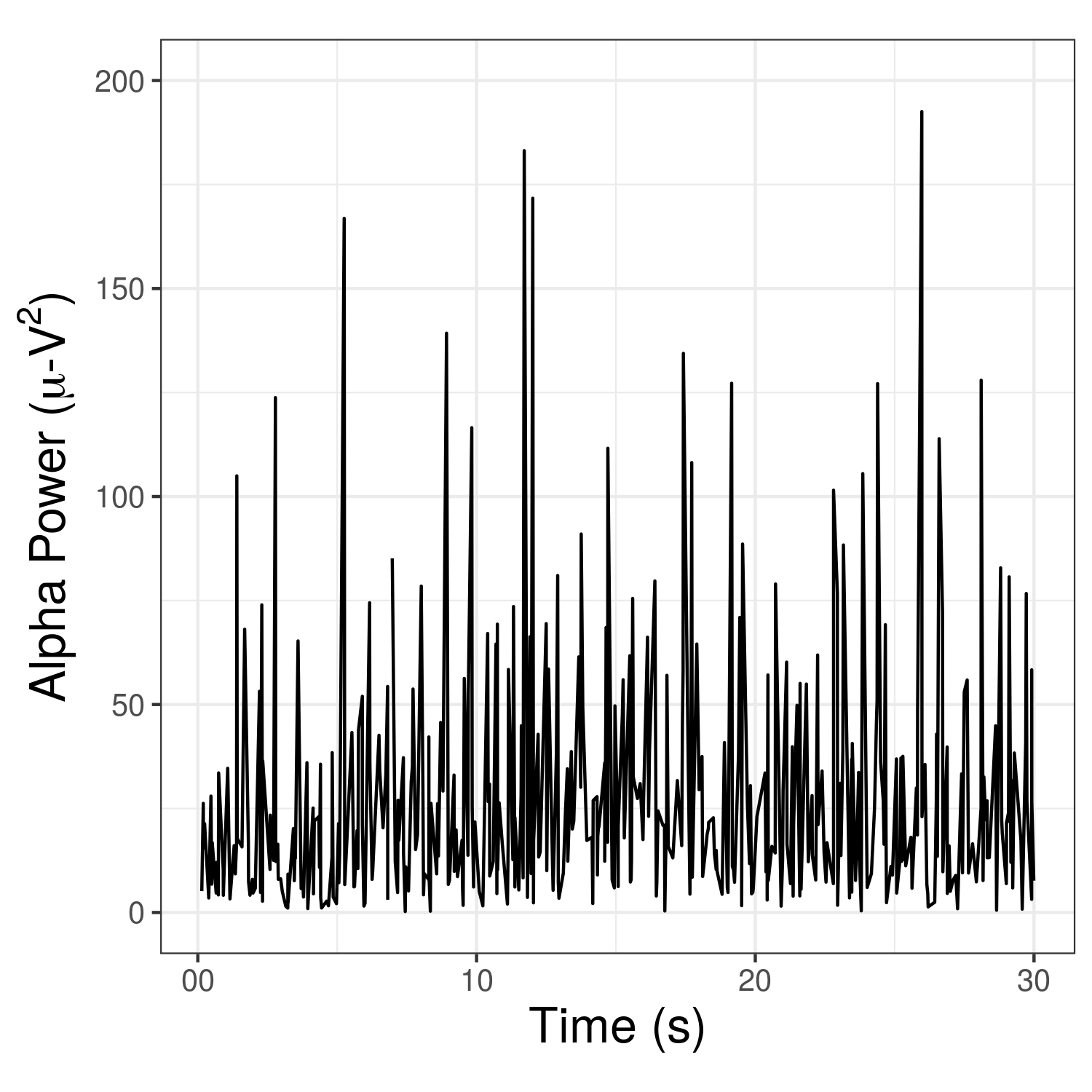}\includegraphics[scale=0.268]{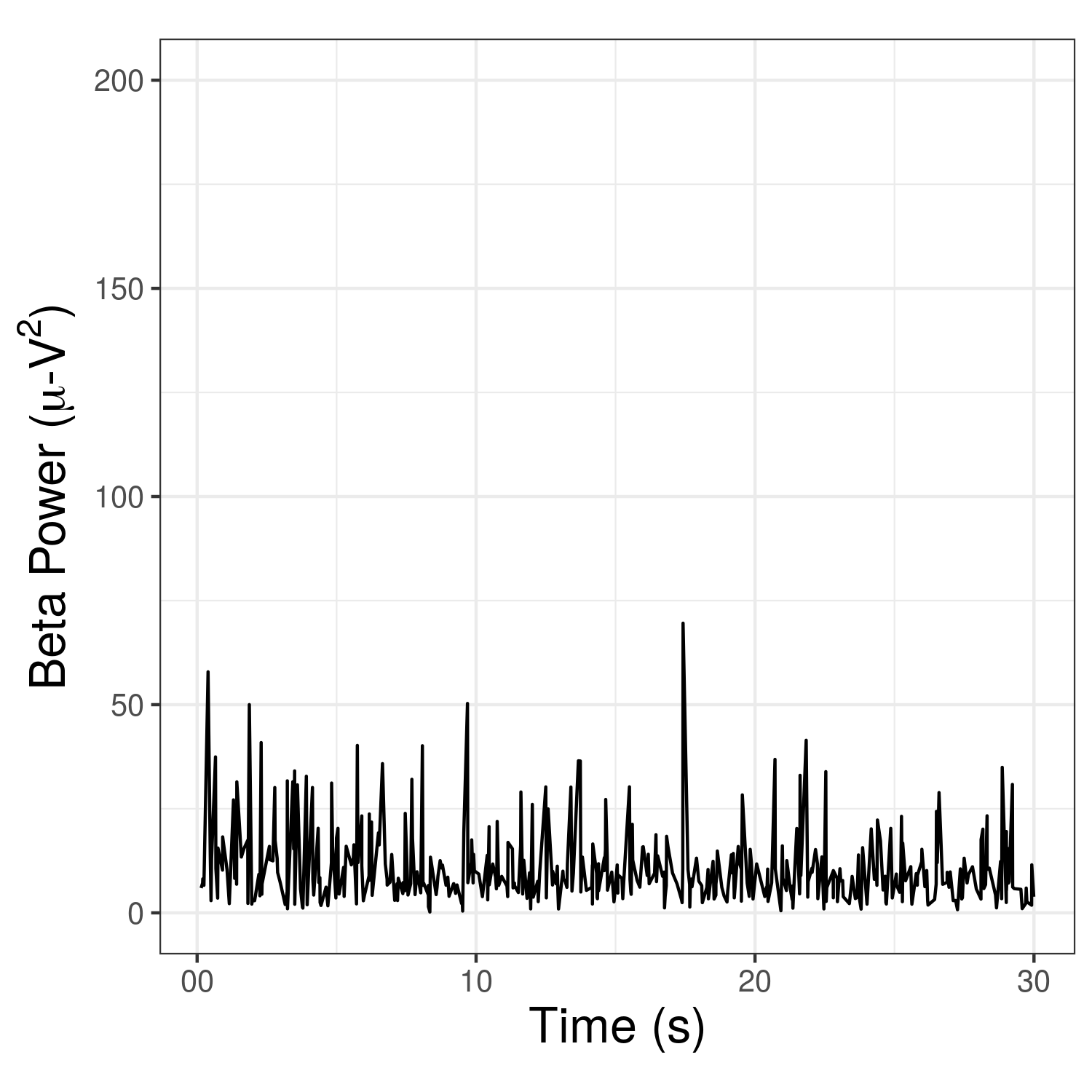} \\ 
{ \centering  \footnotesize  \textbf{Music} \hspace{6cm} \textbf{ Color}} \\
\hspace*{-.5cm}\includegraphics[scale=0.268]{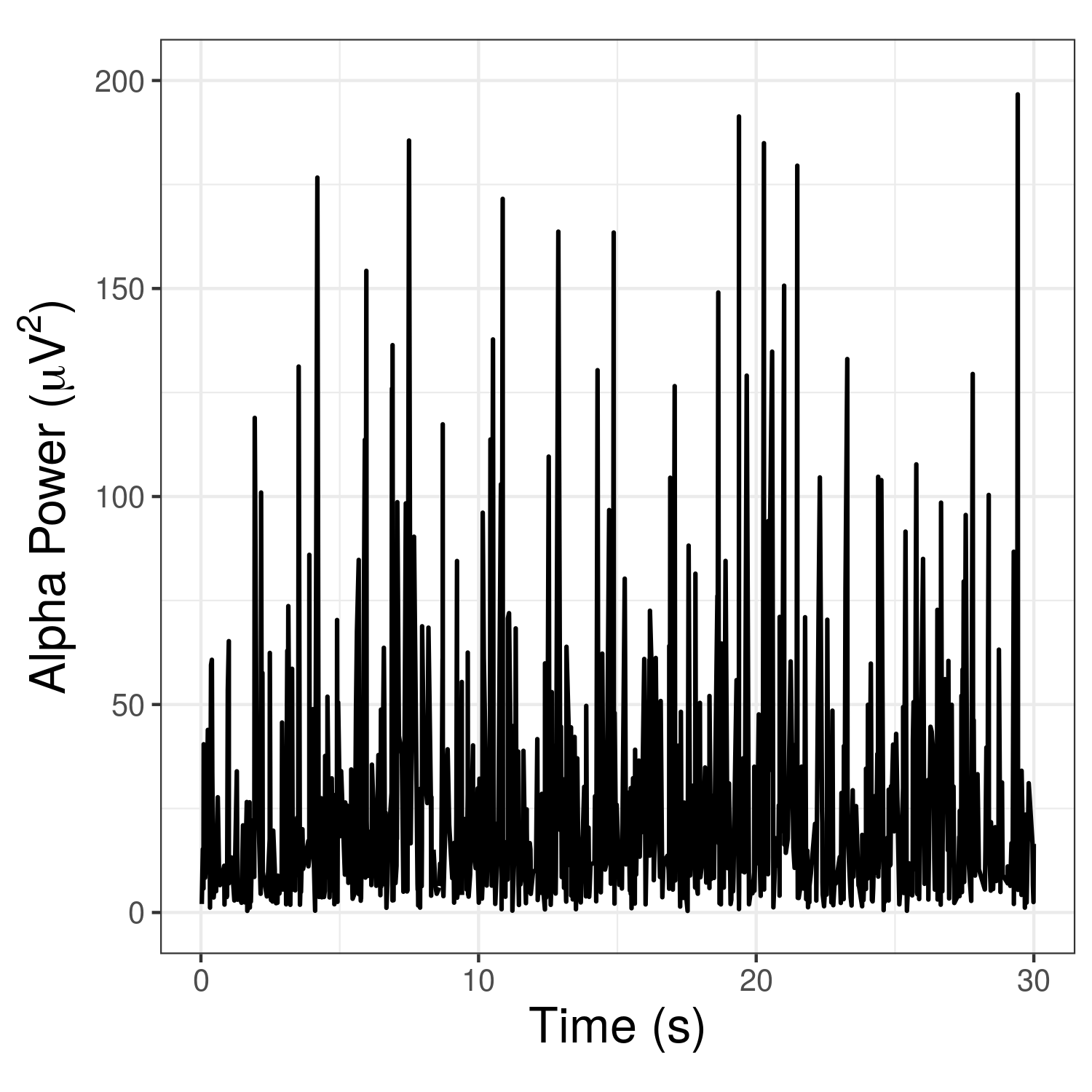}\includegraphics[scale=0.268]{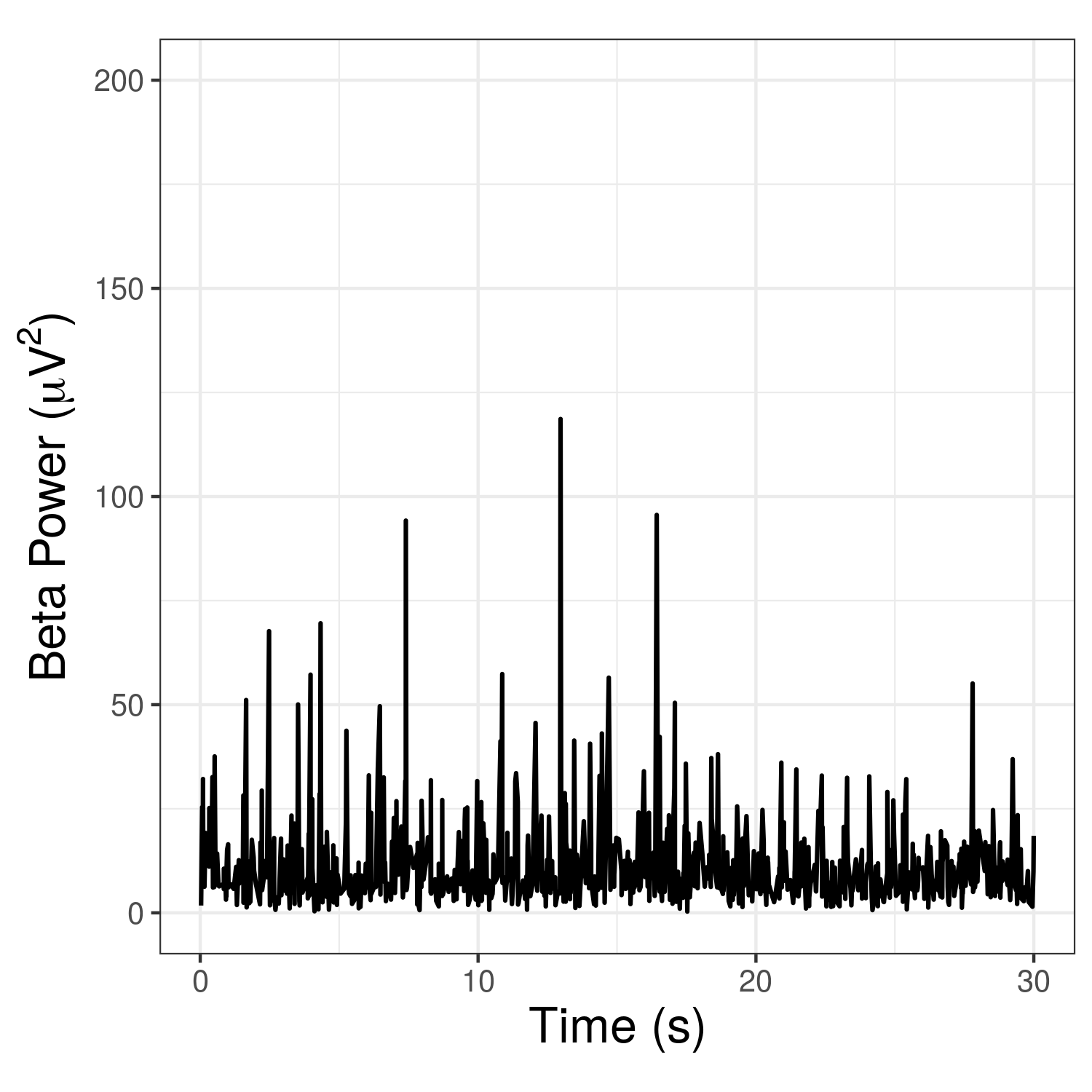}
\includegraphics[scale=0.268]{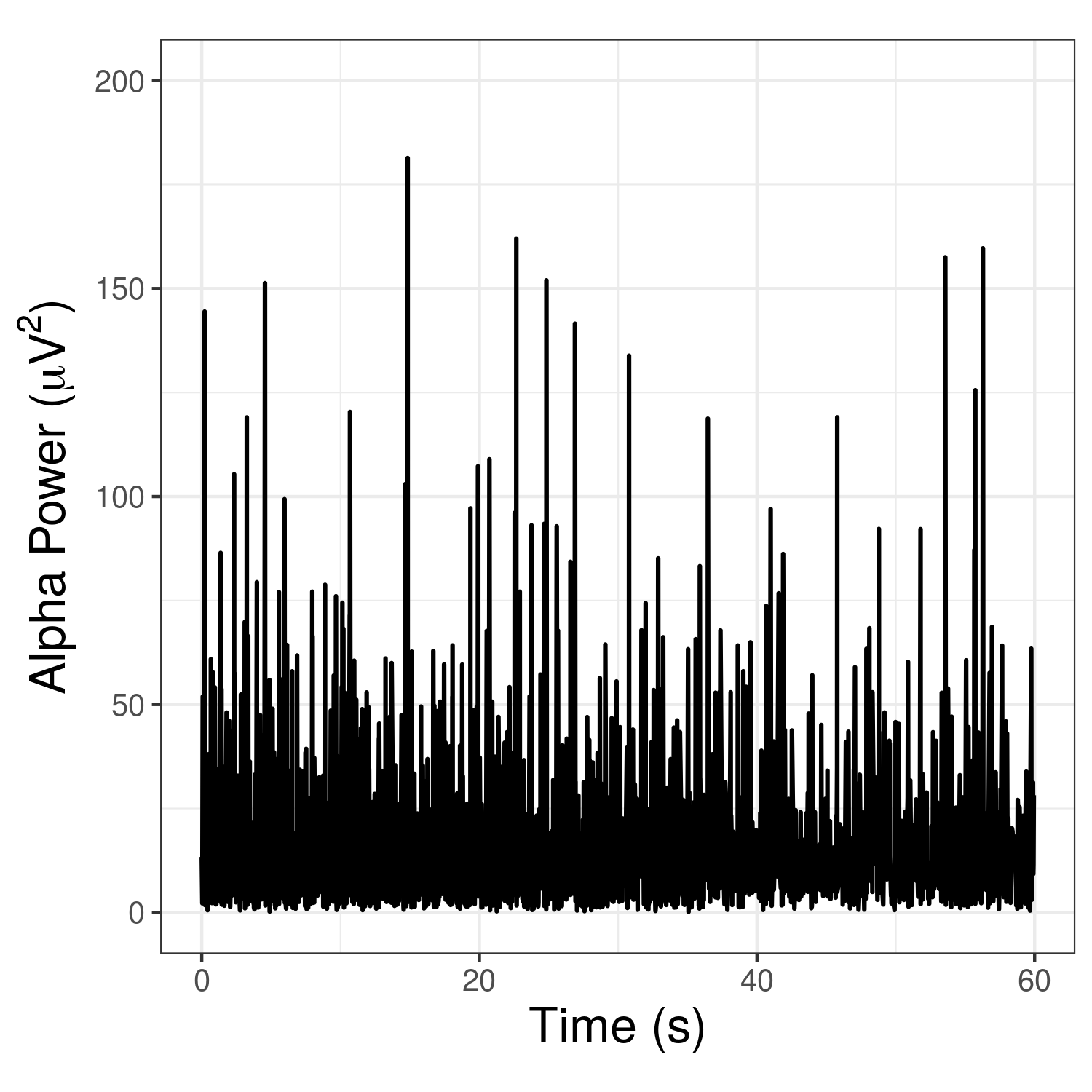}\includegraphics[scale=0.268]{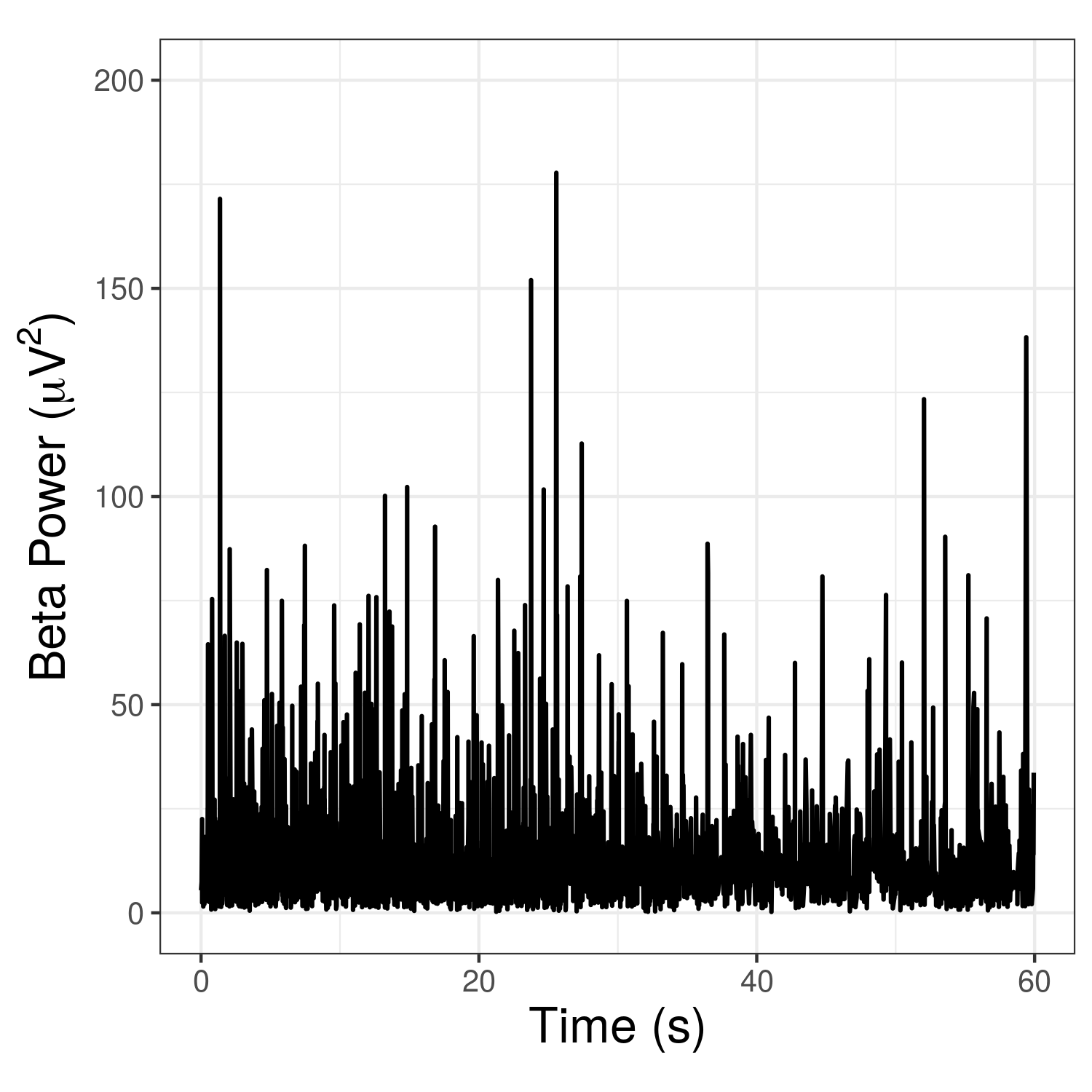} \\
{\centering \footnotesize \hspace*{.4cm} \textbf{Video}\hspace{5.6cm} \textbf{Relax and think}}  \\
\hspace*{-.5cm}\includegraphics[scale=0.268]{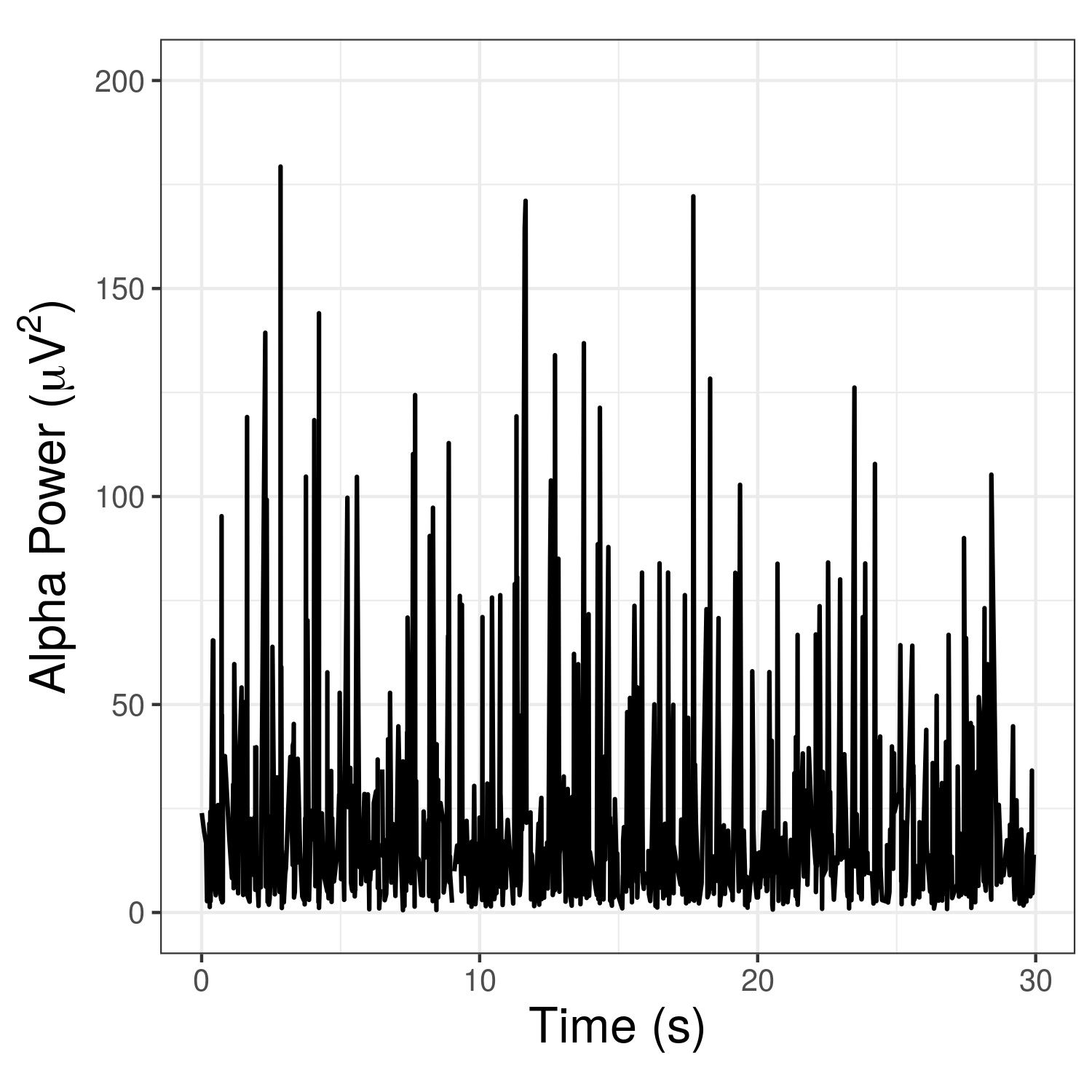}\includegraphics[scale=0.268]{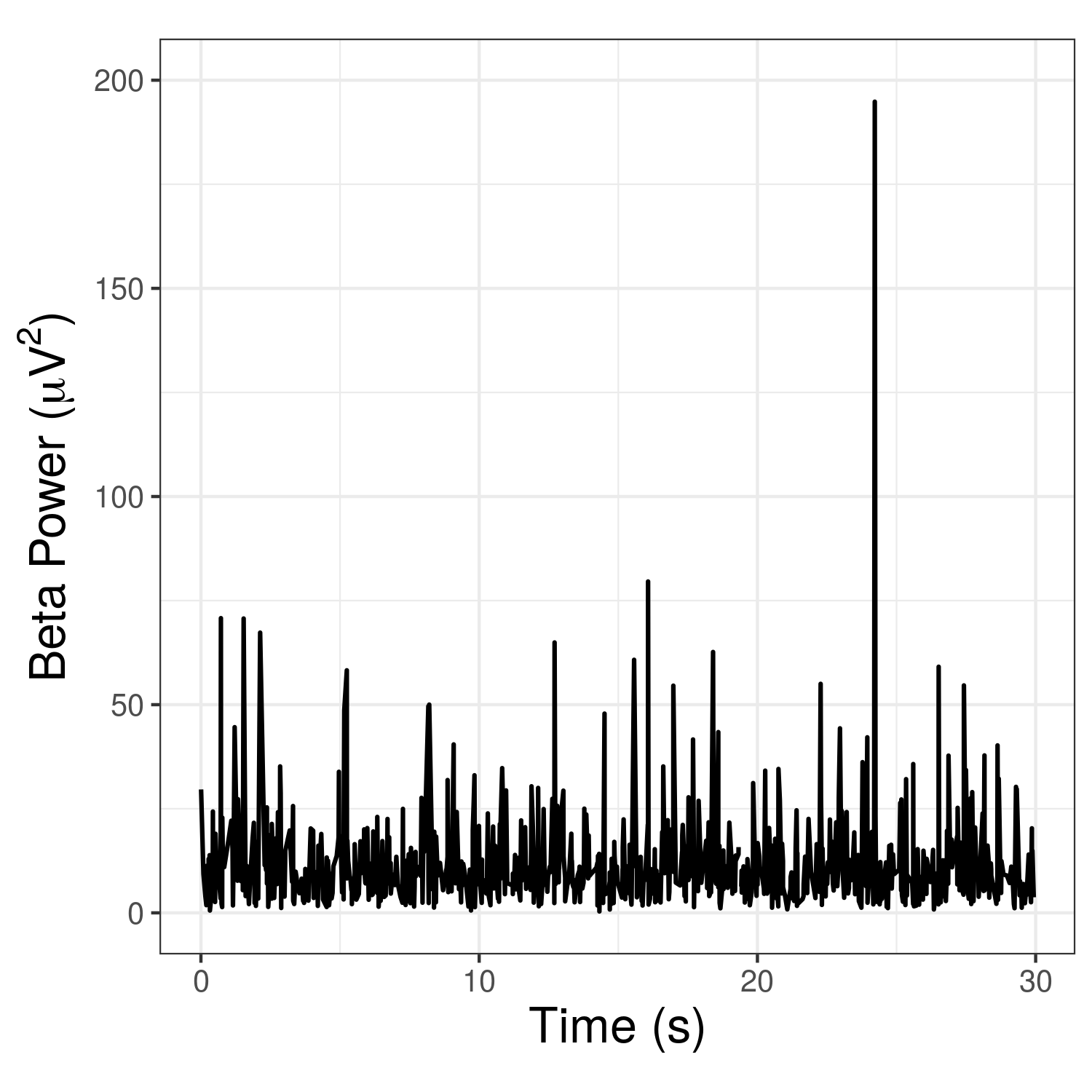}
\includegraphics[scale=0.268]{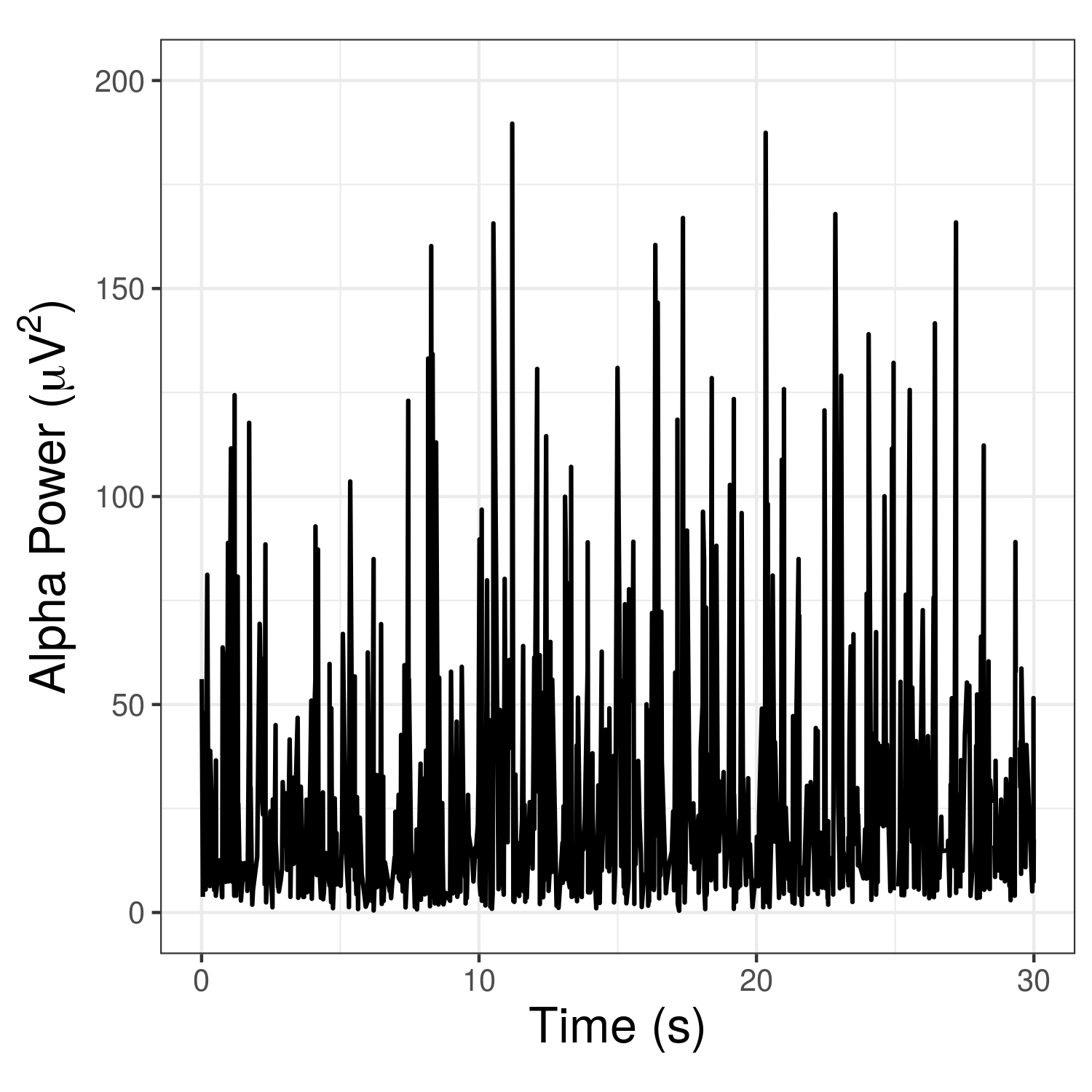}\includegraphics[scale=0.268]{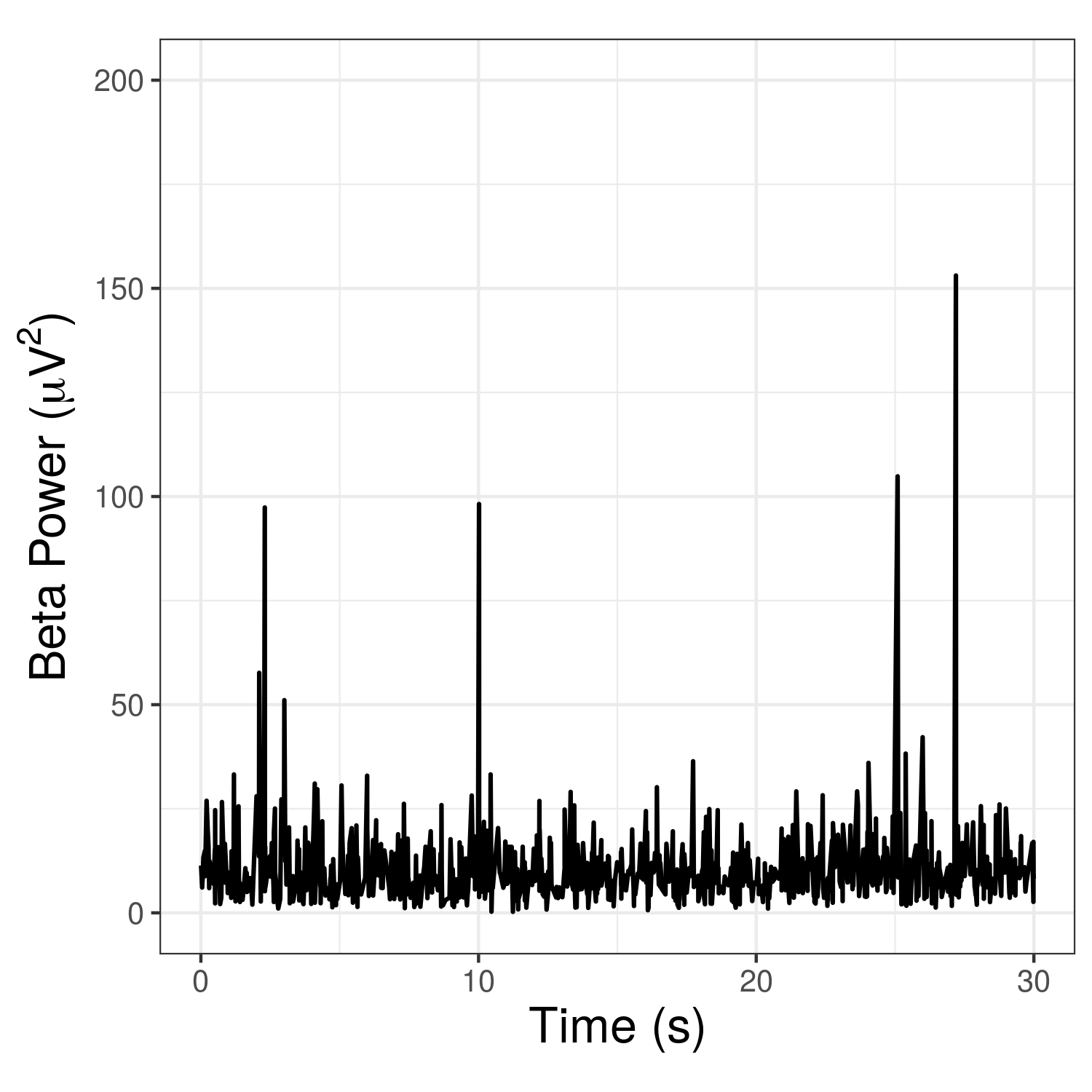} \vspace{-0.2cm}
\caption{ \label{fig:power} Spectral power ($\mu V^2$) of alpha and beta brainwave data plotted against time in seconds ($s$).}
\end{figure} 
Figure~\ref{fig:power} shows the spectral power in (micro)-Volts squared ($\mu V^2$) for alpha and beta waves for all  participants; roughly speaking, higher peaks indicate higher neural activity at a certain point in time on a frequency band of interest. The Ljung--Box test results reported in the supplementary material (Section~5) suggest that the recorded trajectories of spectral power can be regarded as independent over time across all stimuli. Figure~\ref{fig:power} illustrates some aspects of alpha and beta bands that help to build intuition on their signatures for the different stimuli. For instance, when the stimulus is \textit{mathematics} we can notice high activity for both alpha and beta waves, similar behavior to watching a \textit{video} or finding the \textit{color}---all these being tasks that relate to  immediate attention. Indeed, it has been suggested that alpha bands tend to be associated with `attention' as well as with `information processing' \citep{klimesch2012}. Interestingly, the patterns of alpha and beta waves for both \textit{mathematics} and \textit{music} are reasonably similar---which might not be surprising in light of what has been claimed elsewhere \citep[e.g.,][and references therein]{boettcher1994}. Whether that similarity of \textit{mathematics} and \textit{music} also holds for the \textit{joint} distribution of alpha and beta waves is something to be examined below in Section~\ref{brainwave}. Next, we learn from the heavy-tailed brainwave data discussed above using the methods proposed in Sections~\ref{sect:the_model}--\ref{sect:extensions}. Except where mentioned otherwise, all fits have been conducted using the same model specifications and MCMC settings as in Section~\ref{simulation}. 

\subsection{Marginal brainwave analysis}\label{mardata} 
Figure~\ref{fig:margins} shows the marginal density estimates of alpha and beta power pooling all subjects for each stimulus that were obtained using the proposed stable process scale mixture model from Section~\ref{mixture}. Specifically, the fits from Figure~\ref{fig:margins} were obtained using the specification in \eqref{erlang2} along with an uninformative gamma prior and an Erlang kernel. To assess the quality of the obtained fits, we depict in Figure~\ref{fig:margins} q-q boxplots \citep{rodu2022} of random quantile residuals \citep{dunn1996}. 

\afterpage{
  \renewcommand{\thefigure}{5}
\begin{figure}[H]
  \thispagestyle{empty}
  \vspace{-1.2cm}
{\centering \scriptsize \hspace*{0.15cm} \textbf{Mathematics} \hspace{5.2cm} \textbf{Relaxation}}\\
\centering \includegraphics[scale=0.26]{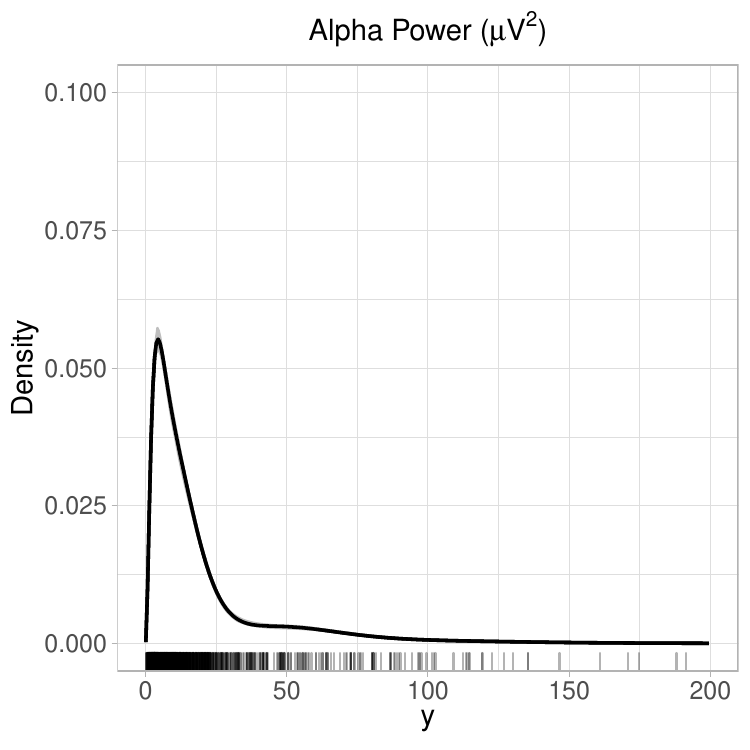}\includegraphics[scale=0.26]{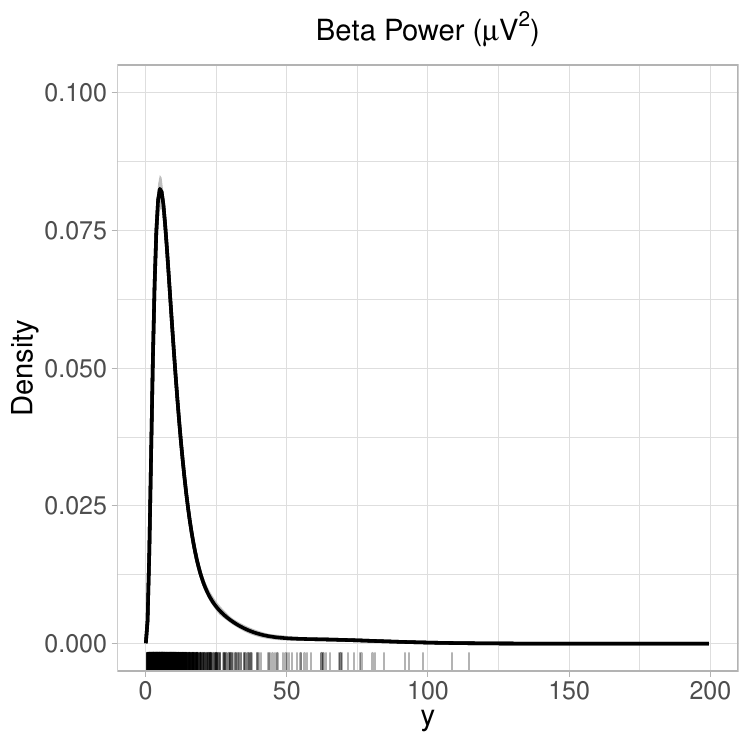}
\centering \includegraphics[scale=0.26]{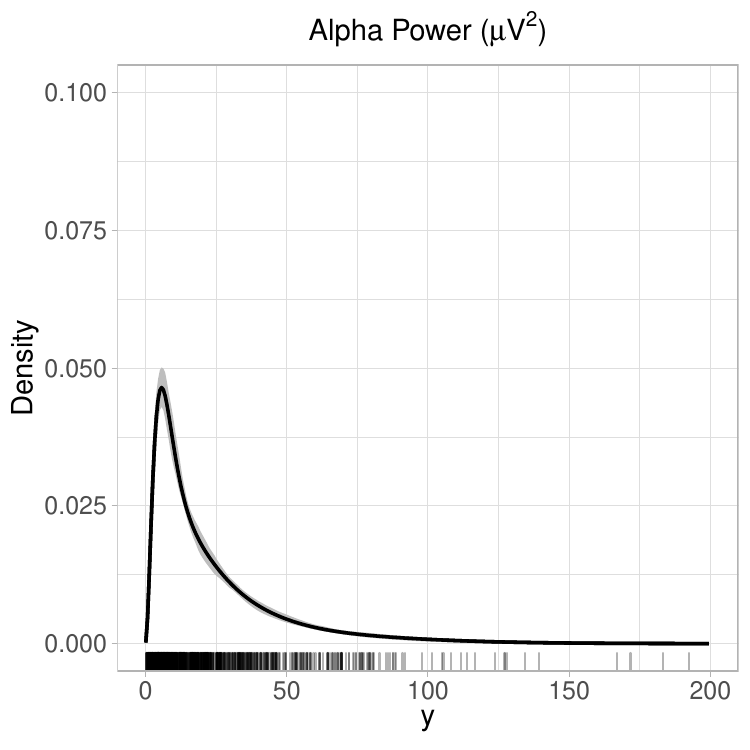}\includegraphics[scale=0.26]{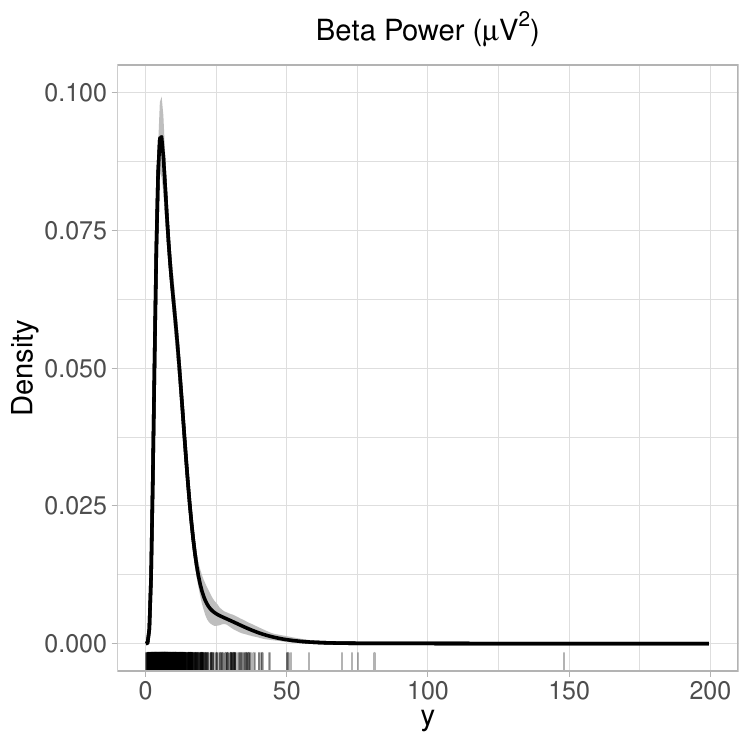} \\ \vspace{-0.4cm}
{\centering \scriptsize \hspace*{.3cm} \textbf{Music} \hspace{5.8cm} \textbf{ Color}} \\
\centering \includegraphics[scale=0.26]{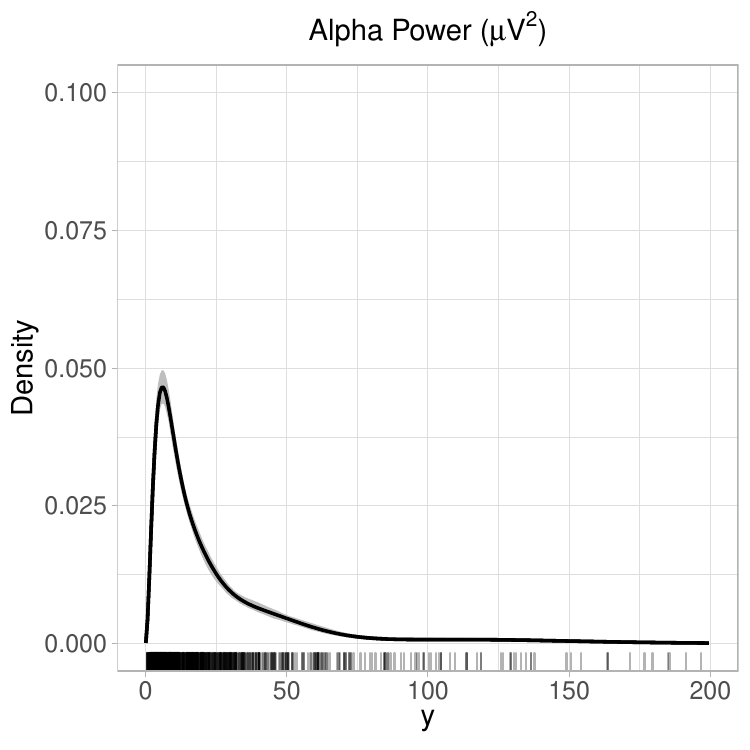}\includegraphics[scale=0.26]{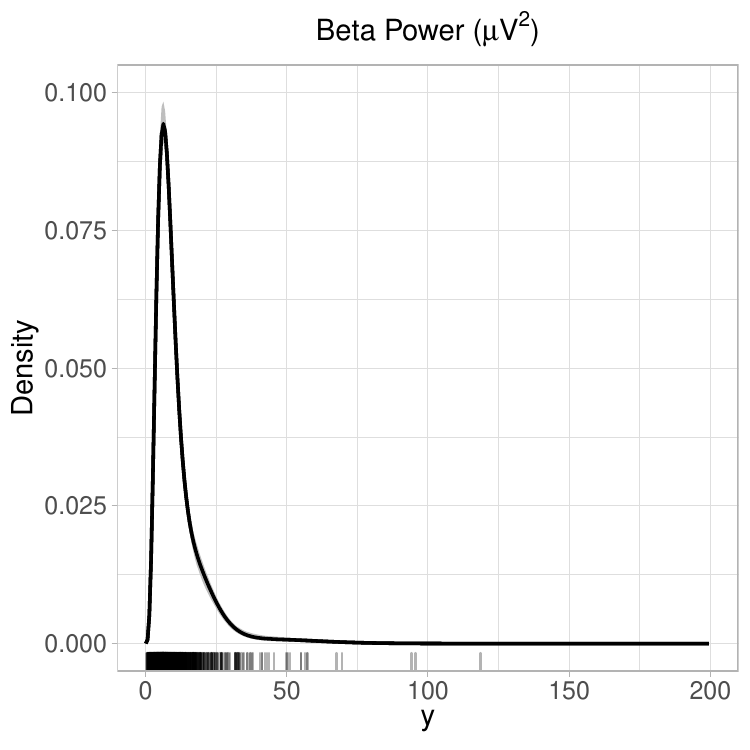}
\centering \includegraphics[scale=0.26]{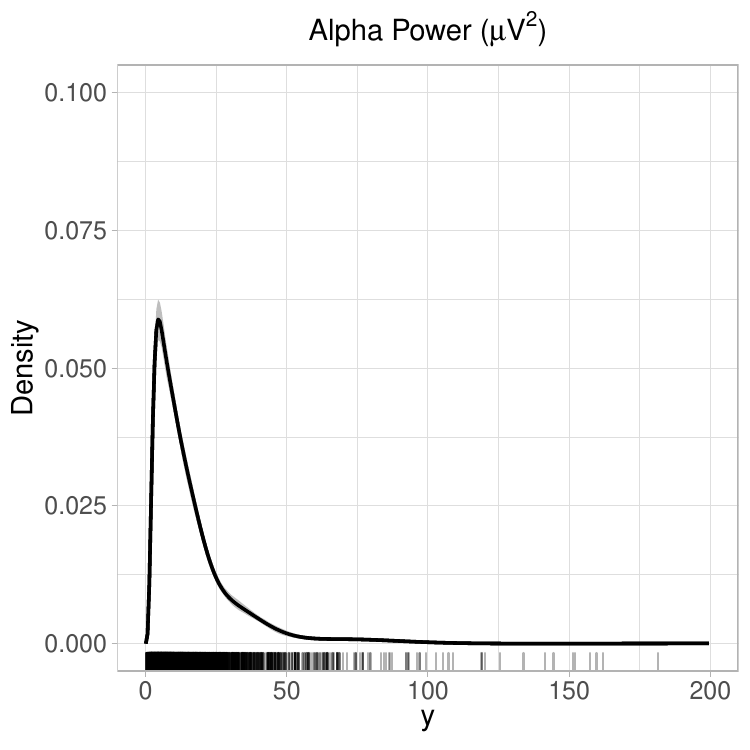}\includegraphics[scale=0.26]{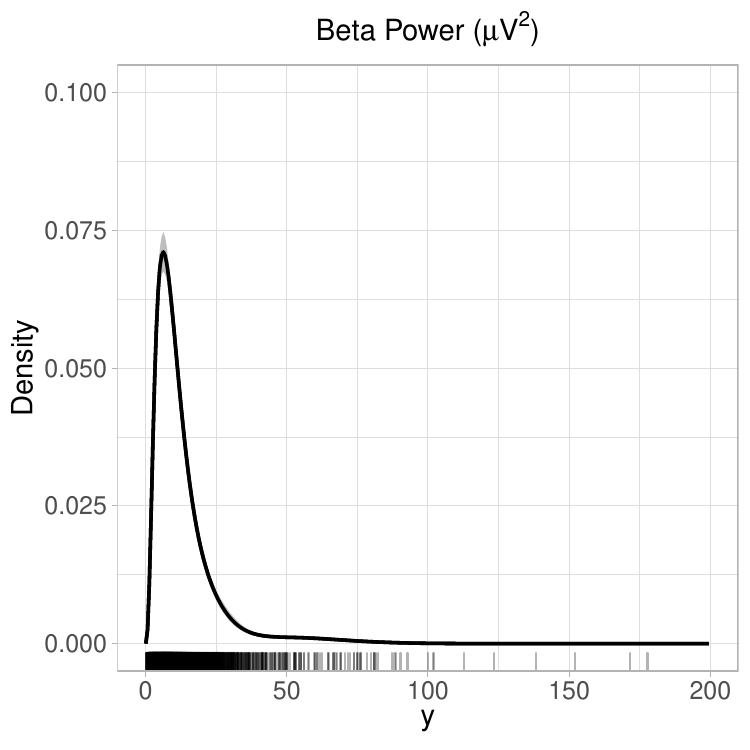} \\ \vspace{-0.4cm}
{\scriptsize \hspace*{.8cm} \textbf{Video}\hspace{5.3cm} \textbf{Relax and think}} \\
\centering \includegraphics[scale=0.26]{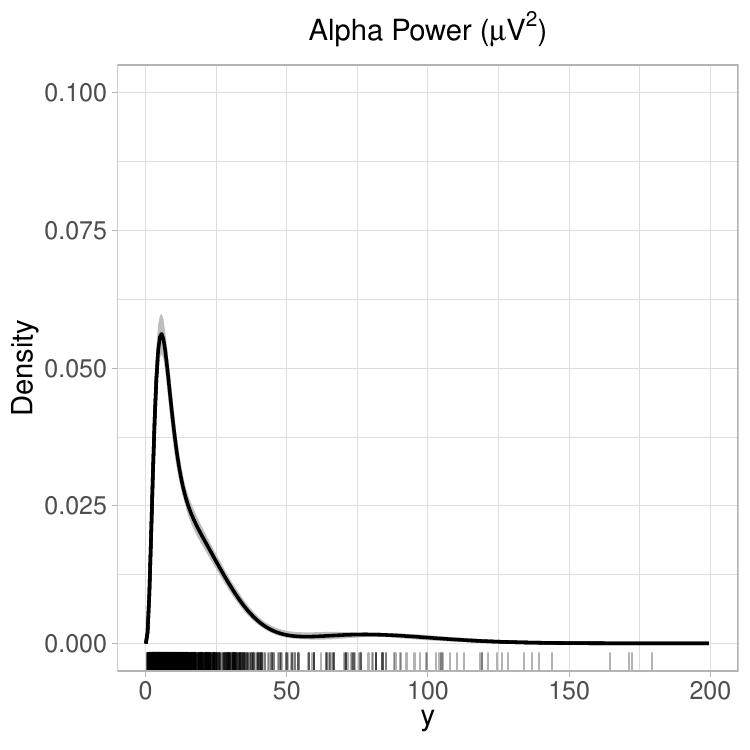}\includegraphics[scale=0.26]{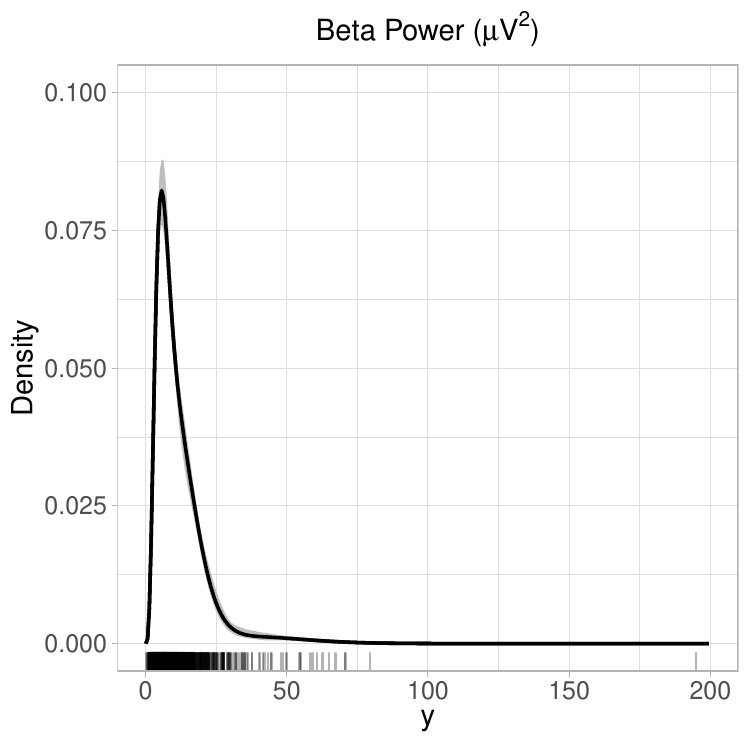}
\centering \includegraphics[scale=0.26]{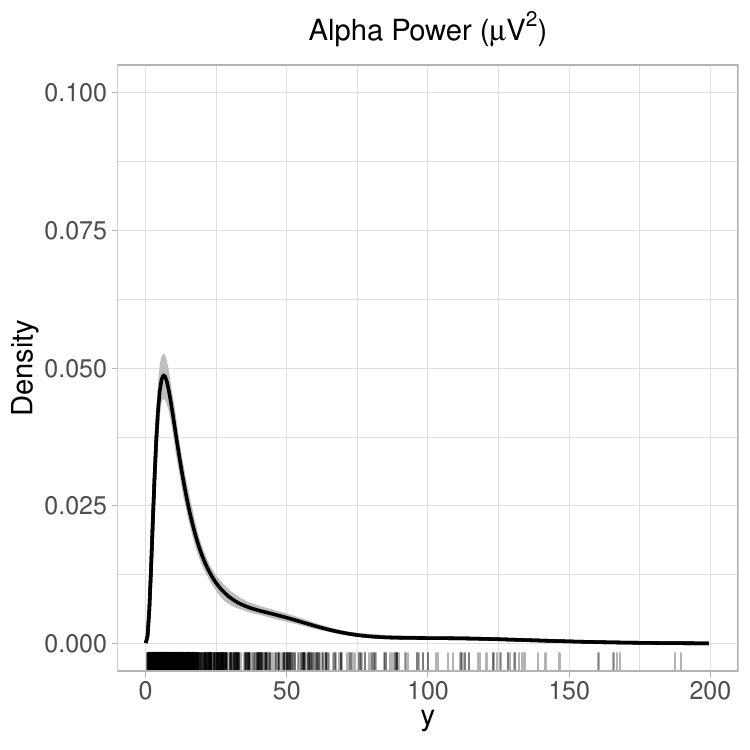}\includegraphics[scale=0.26]{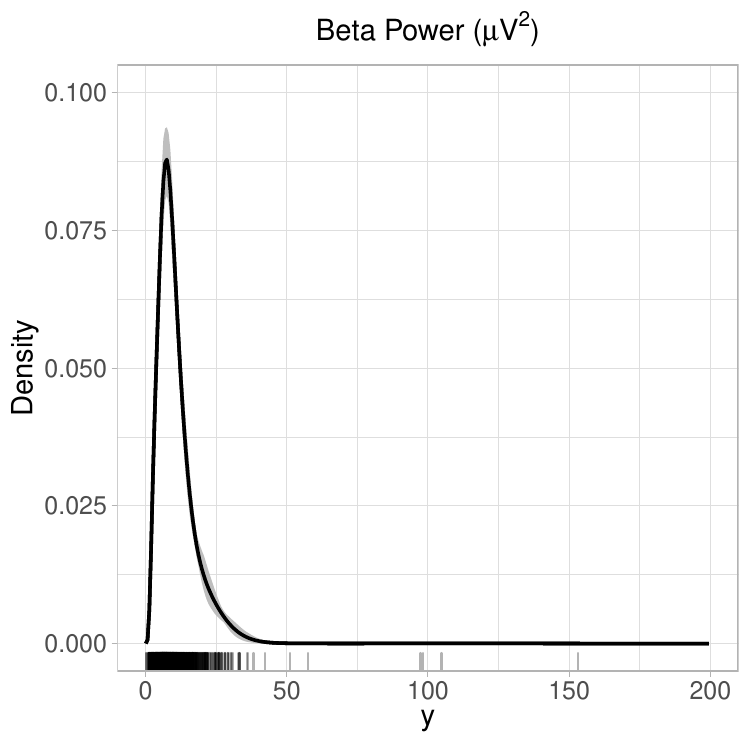} \\ \vspace{-0.4cm}
{\centering \scriptsize \hspace*{0.15cm} \textbf{Mathematics} \hspace{5.2cm} \textbf{Relaxation}}\\
\centering \includegraphics[scale=0.26]{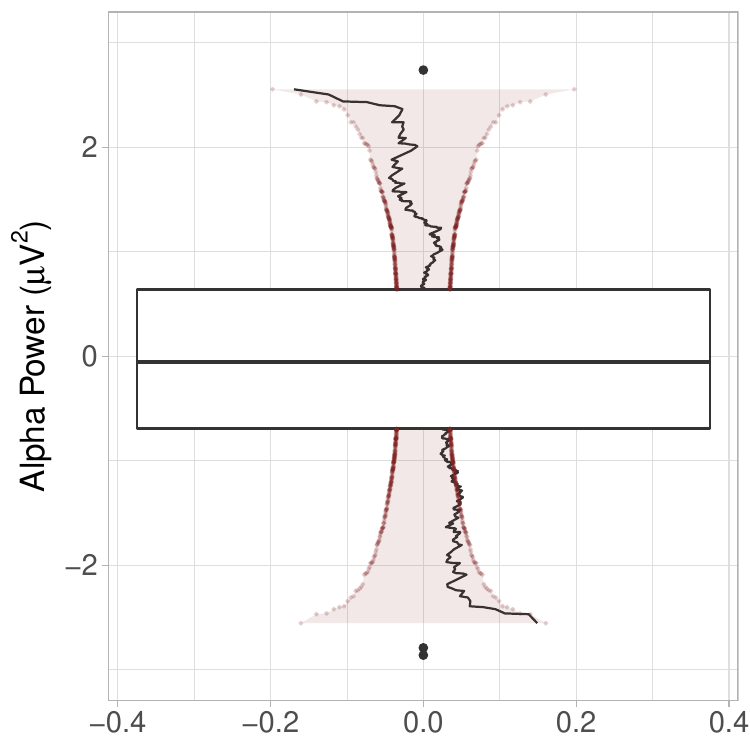}\includegraphics[scale=0.26]{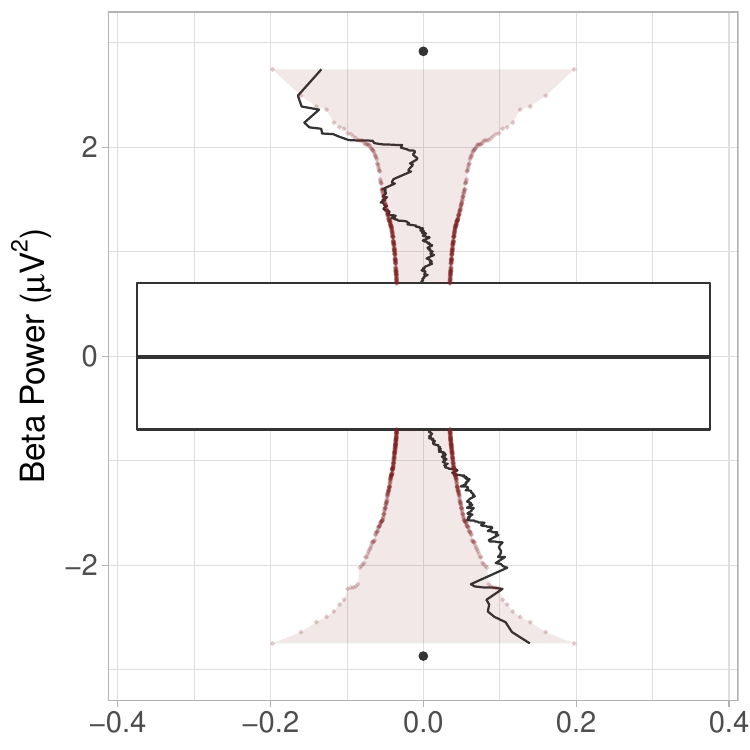}
\centering \includegraphics[scale=0.26]{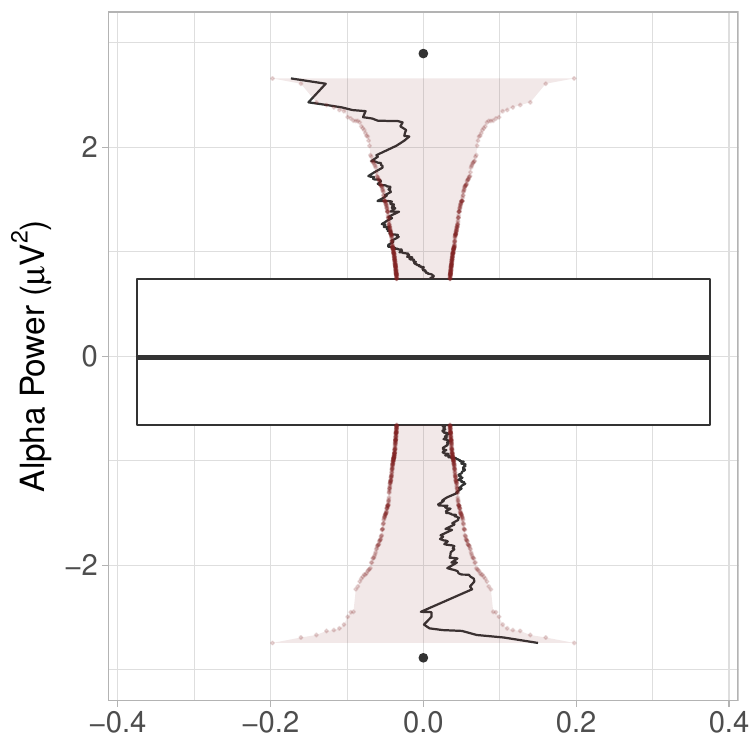}\includegraphics[scale=0.26]{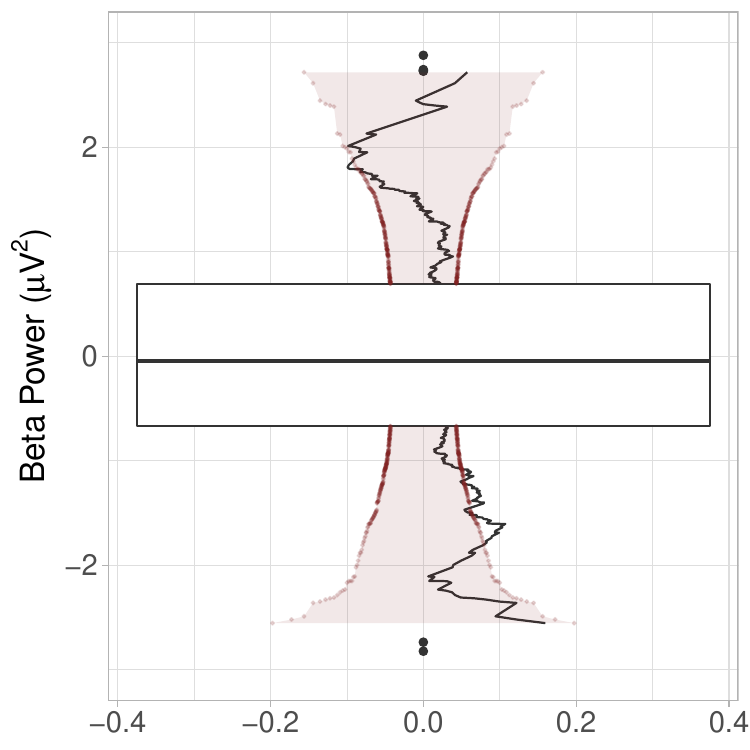} \\
\vspace{-0.2cm}
{\centering \scriptsize \hspace*{.3cm} \textbf{Music} \hspace{6cm} \textbf{ Color}} \\
\centering \includegraphics[scale=0.26]{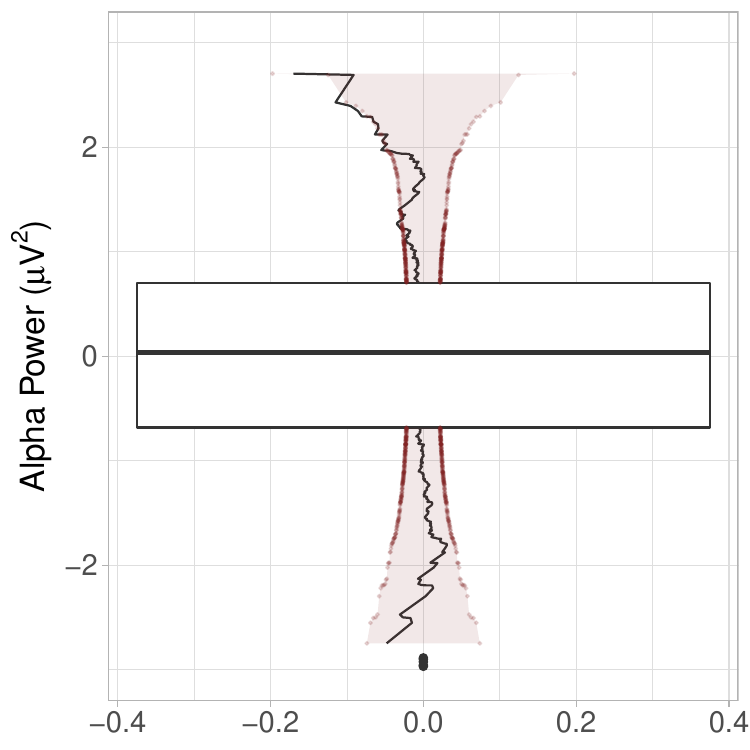}\includegraphics[scale=0.26]{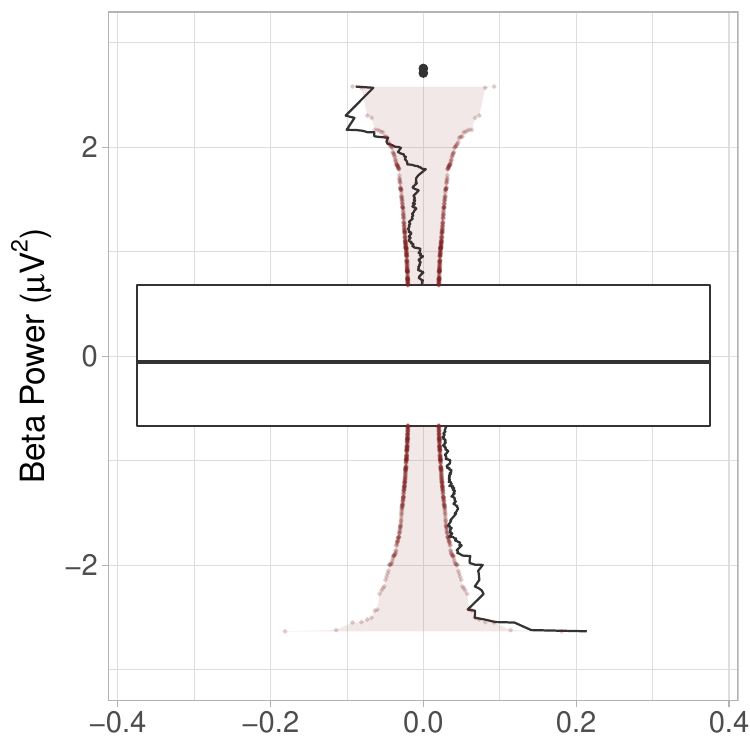}
\centering \includegraphics[scale=0.26]{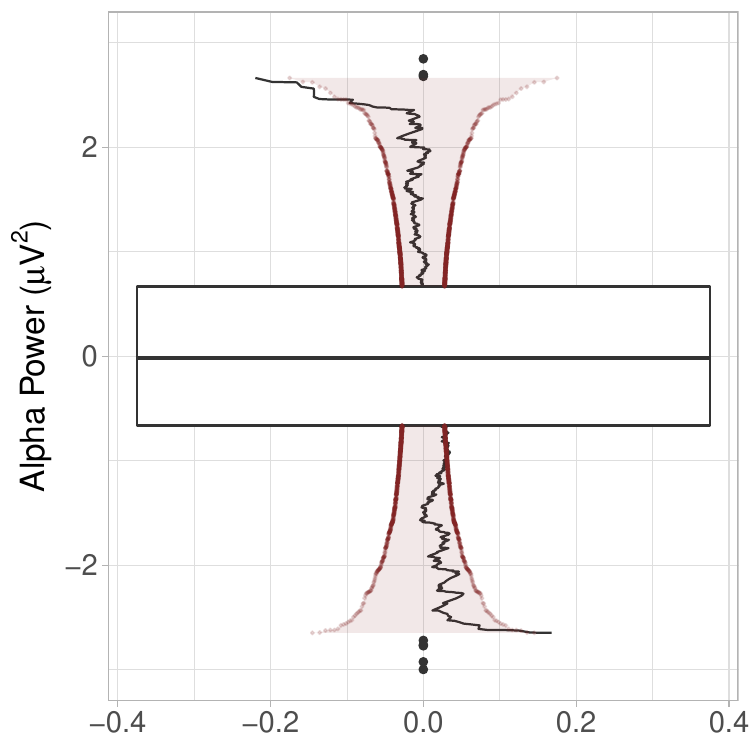}\includegraphics[scale=0.26]{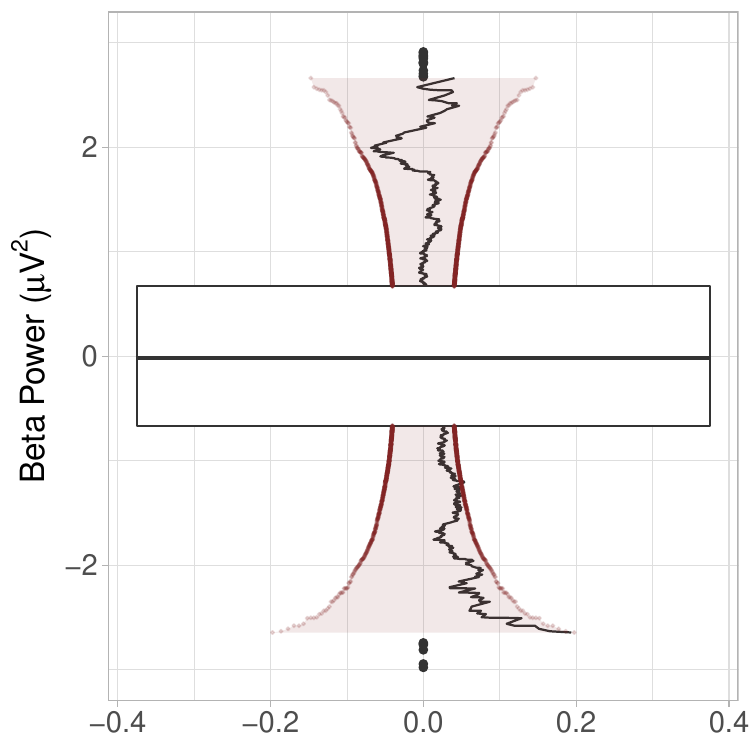} \\
\vspace{-0.2cm}
{\scriptsize \hspace*{.8cm} \textbf{Video}\hspace{5.3cm} \textbf{Relax and think}} \\
\centering \includegraphics[scale=0.26]{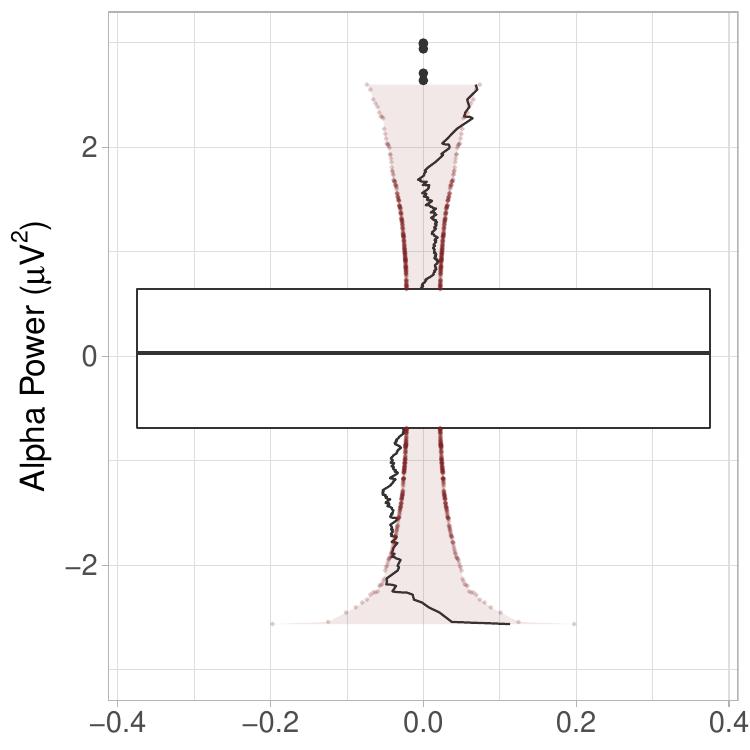}\includegraphics[scale=0.26]{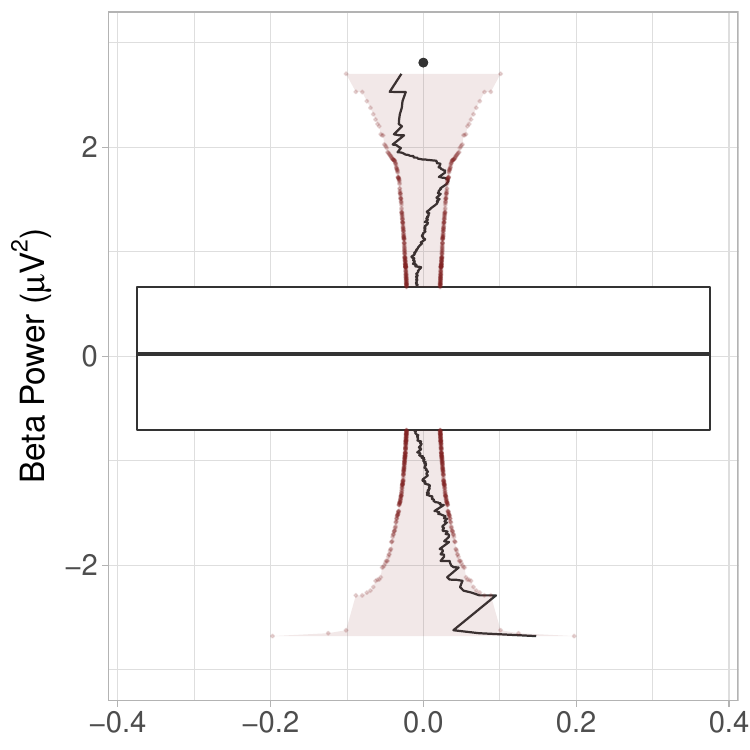}
\centering \includegraphics[scale=0.26]{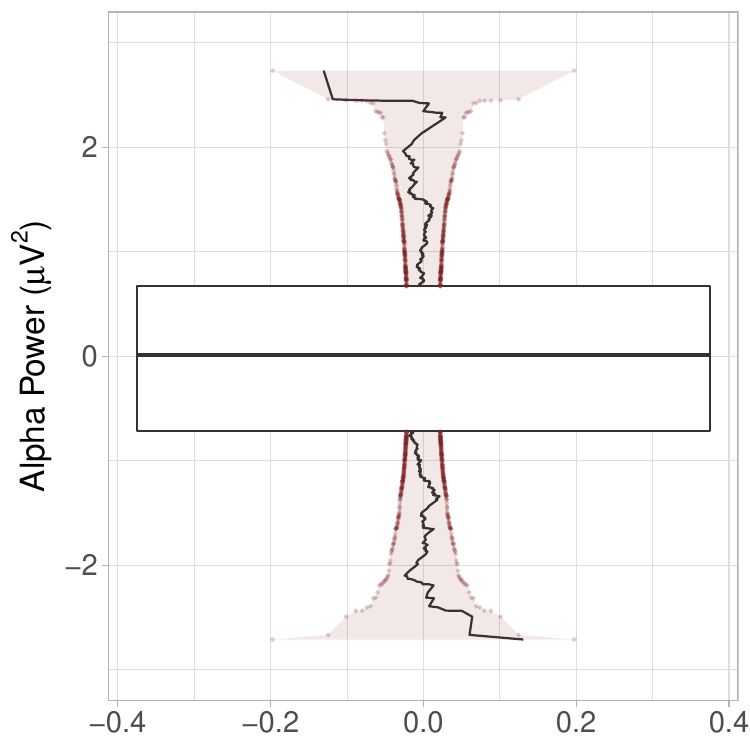}\includegraphics[scale=0.26]{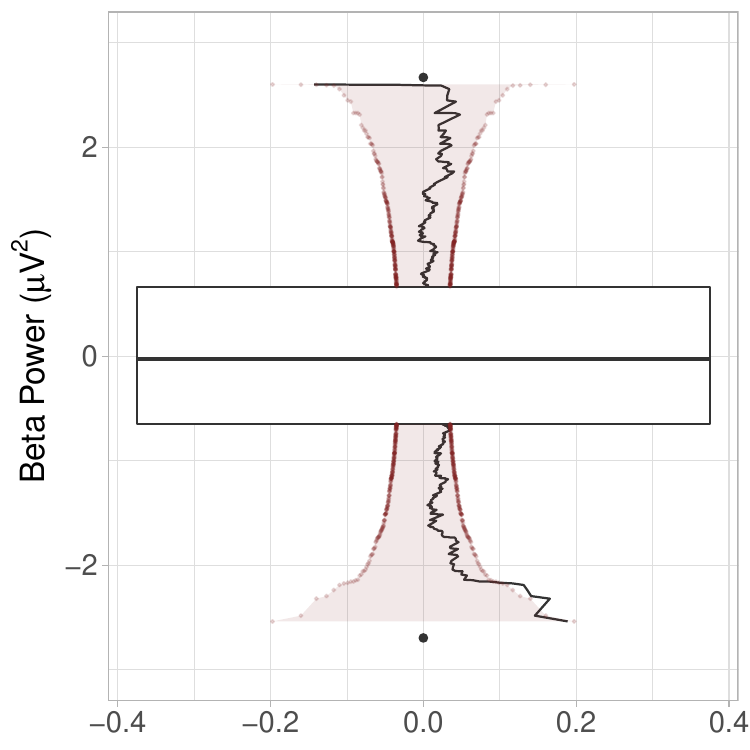}
\caption{ \label{fig:margins} Top: Marginal density estimates
of alpha and beta power for each stimulus, obtained with proposed
stable process scale mixture model from Section~\ref{mixture}, along
with $95\%$ credible bands. Bottom: Corresponding q-q boxplot of randomized quantile residuals.}
\end{figure}
}\vspace{.3cm}

The obtained q-q boxplots provide evidence that the stable process scale mixture adjusts well both the bulk and the right tail of the data. We have also compared the fitted stable process scale mixture model against the DP shape mixtures in \eqref{pymixtures2} with a Pareto kernel and a gamma centering distribution. In line with    the findings from the univariate scenario of the simulation study in Section~\ref{mc}, we again found evidence in favor of a far more sensible behavior of the stable process scale mixture in comparison with the shape mixture of heavy-tailed kernels.  

The comparison of the q-q boxplots in Figure~\ref{fig:margins}---for the stable process scale mixture---against those available from the supplementary material (Section~5)---for the DP shape mixture of Pareto kernels---clearly indicates a better performance of the former over the latter over both the left and right tails.

\subsection{Stimulus-specific joint brainwave analysis}\label{brainwave}

While Section~\ref{mardata} offered a one-dimensional snapshot across different stimuli, we now apply the proposed methods  to learn about the joint distribution of the power of brainwaves on alpha and frequency bands, conditional on the activities and stimuli discussed in Section~\ref{data}. To put it differently, we now apply the multivariate regression framework from Sections~\ref{multivariate}--\ref{conditional} to learn about the conditional dependence structure governing alpha and beta rhythms, and to borrow strength across stimuli, rather than just fitting each density individually as in Section~\ref{mardata}.

\noindent Figure~\ref{fig:stimulus} shows the contours of the fitted conditional joint densities, given the stimulus under analysis, and it sheds light on the dynamics governing the joint behavior of the alpha and beta brain rhythms. First, the joint densities for some stimuli look similar---such as, for example, \textit{music} and \textit{relax and think}---which suggests a similar joint behavior of the rhythms of alpha and beta bands for these stimuli. Second, \textit{mathematics} and \textit{music}---which looked similar just by examining the raw data in Figure~\ref{fig:power} and the marginal fits in Figure~\ref{fig:power}---have a clearly different dependence structure as can be seen from Figure~\ref{fig:stimulus}. In other words, while marginally the alpha- and beta-band oscillations for \textit{mathematics} and \textit{music} do look similar, their `synchronization' or joint behavior looks markedly different. \vspace{-0.2cm}

\begin{figure}[H]
  \renewcommand{\thefigure}{6}
  \footnotesize \textbf{Mathematics} \hspace{2.5cm} \textbf{Relaxation} \hspace{3.1cm} \textbf{Music} \\
\includegraphics[scale=0.33]{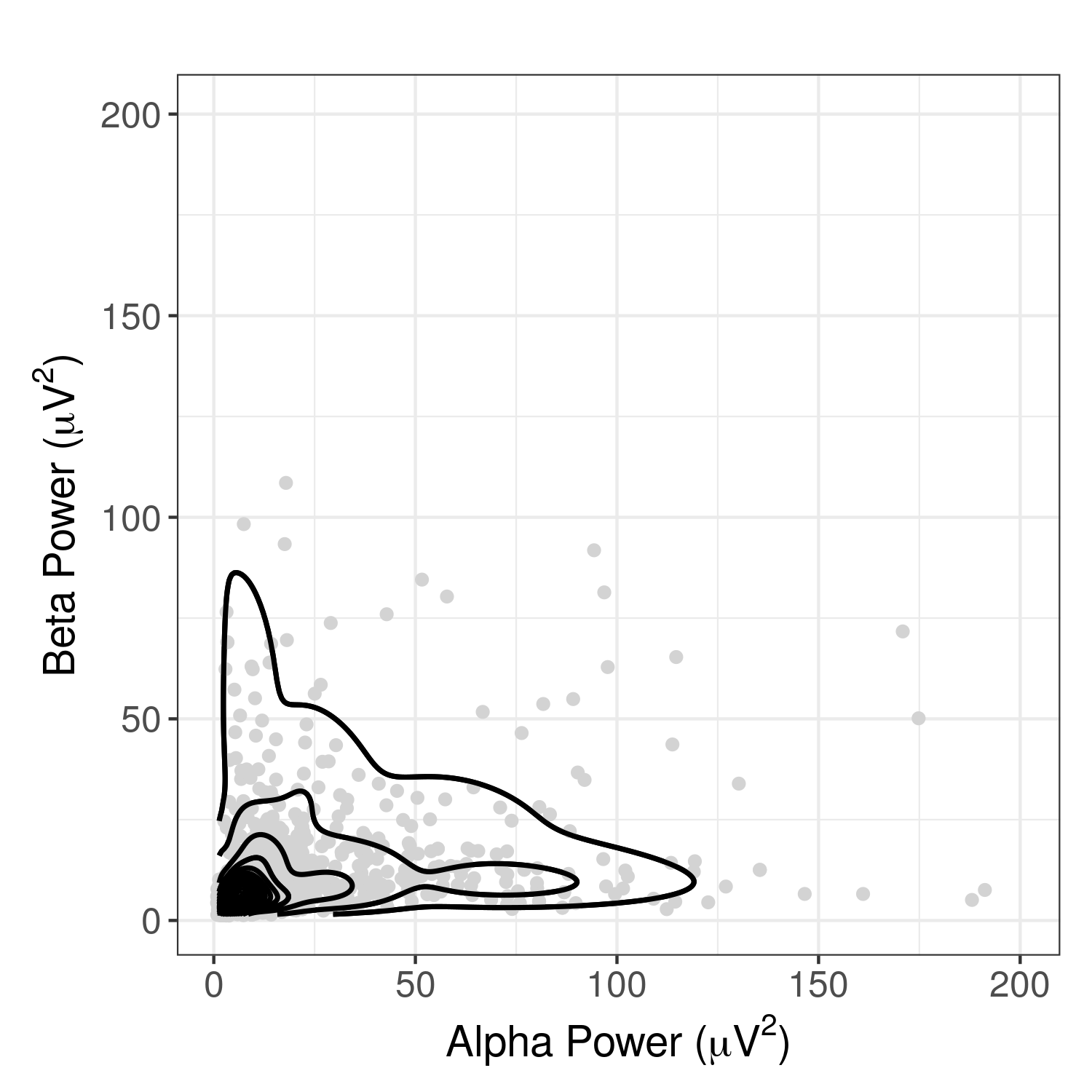}\includegraphics[scale=0.33]{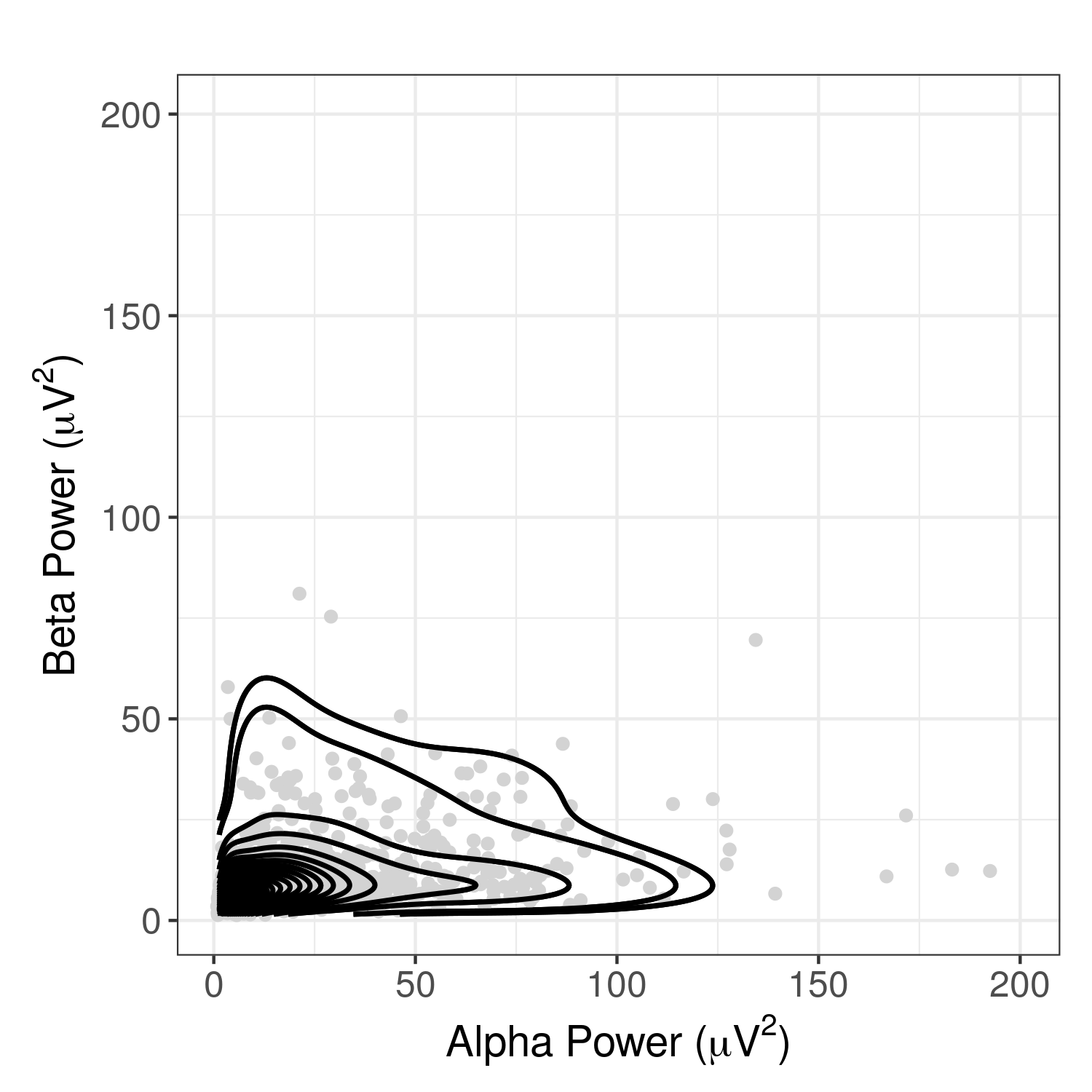}
\includegraphics[scale=0.33]{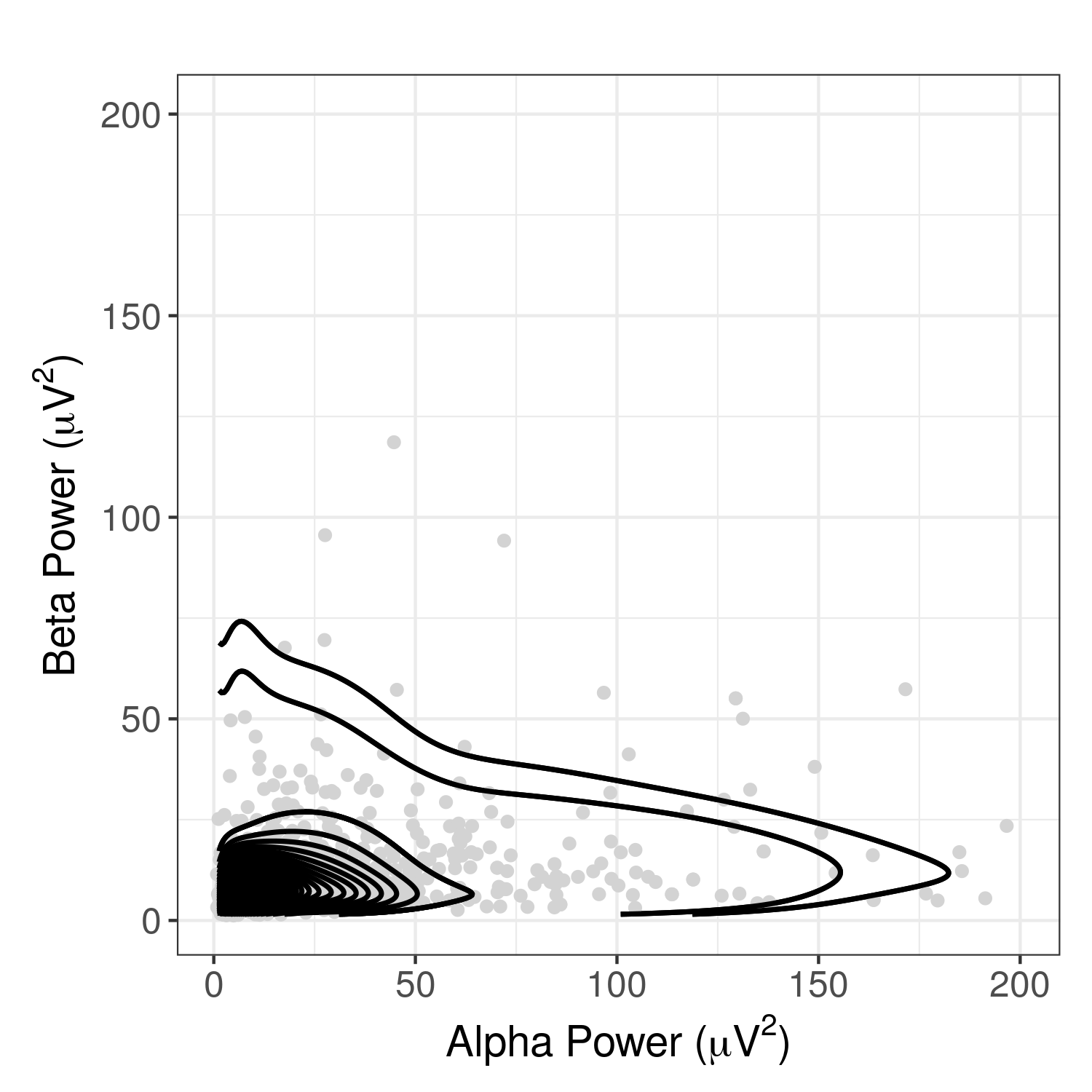}\\
\footnotesize  \hspace{0.8cm} \textbf{Color} \hspace{3.5cm} \textbf{Video} \hspace{2.6cm} \textbf{Relax and think} \\
\includegraphics[scale=0.33]{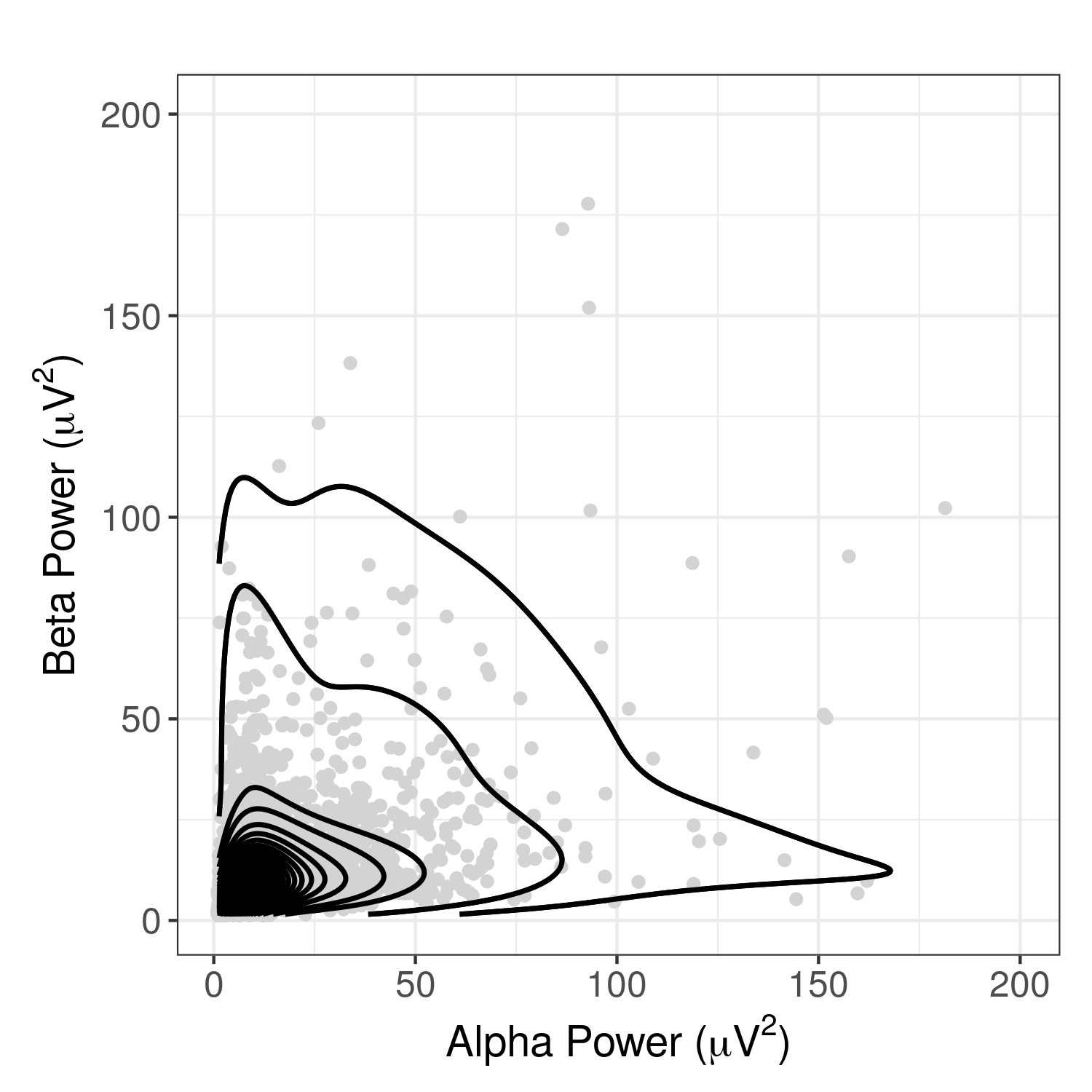}
\includegraphics[scale=0.33]{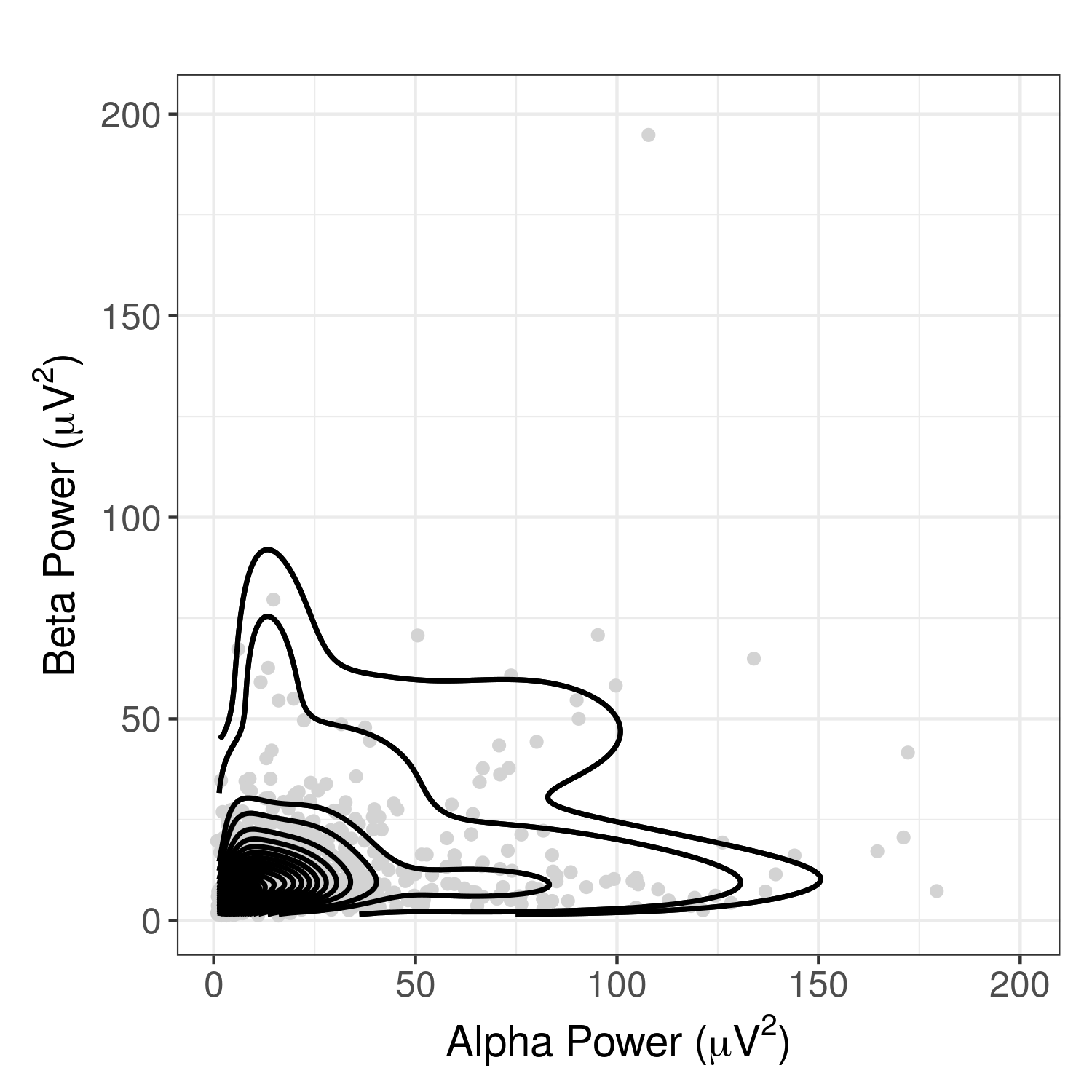}\includegraphics[scale=0.33]{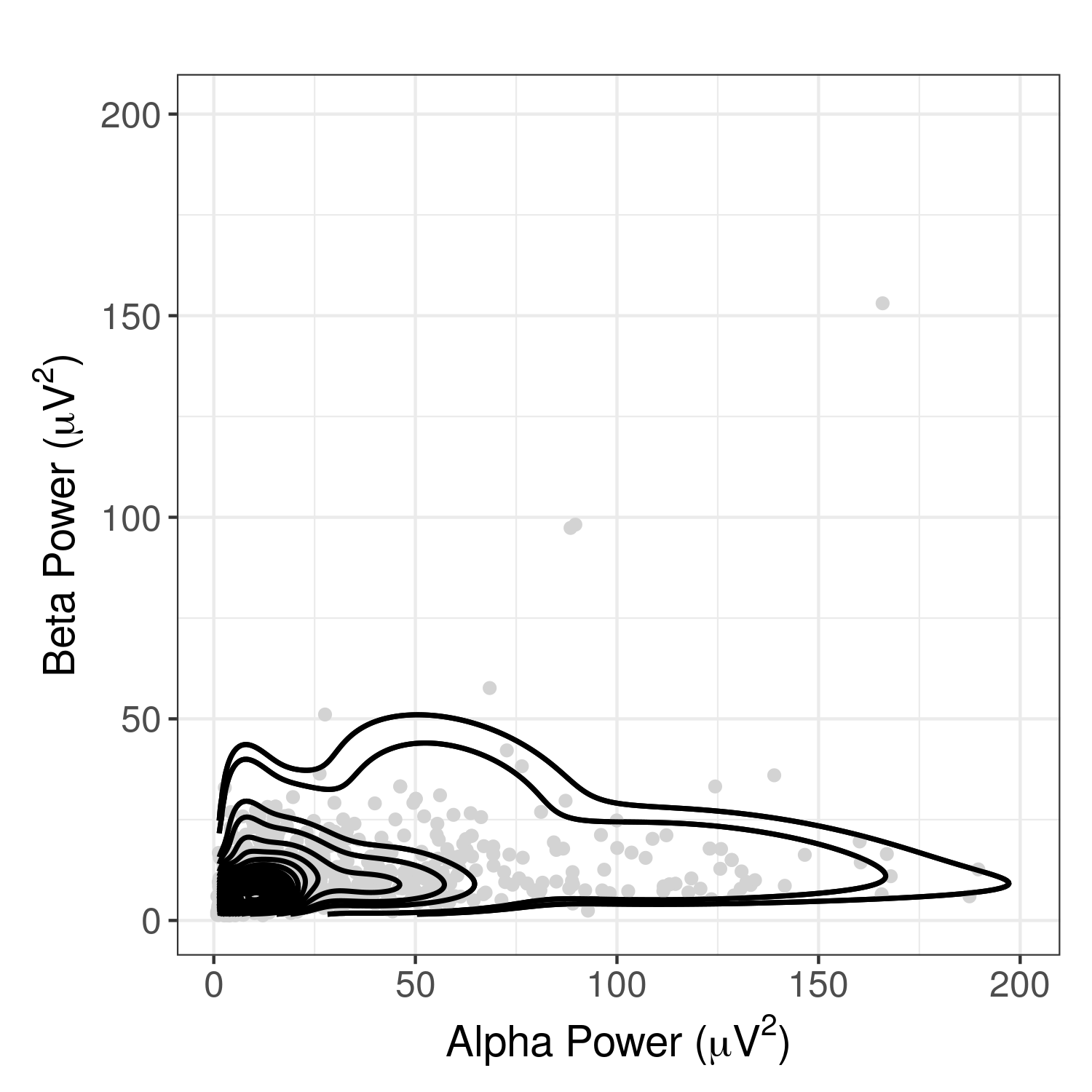} \vspace{-0.3cm}
\caption{\label{fig:stimulus}Contours of the posterior conditional joint density estimate of alpha and beta power for each specific stimulus along with raw data; the fit was obtained using the stable process scale mixture from Section~\ref{conditional}.}
\end{figure}\vspace{-0.2cm}

\section{Concluding remarks}\label{discussion} 

This paper studied the tails of the NGG process, and it has shown in particular that the tail of the NGG in $\mathcal{N}$ is tantamount to that of the baseline. This result is in clear contrast to what is known to hold for the DP, whose tails are exponentially much thinner than those of the centering; in addition, we have also derived for the first time envelopes on which the tail of the NGG-$\mathcal{N}$ must lie. We then devised  two classes of heavy-tailed NGG mixture models, along with their extensions to a multivariate heavy-tailed setting---as well as to a regression framework. Equipped with the above-mentioned characterization of the tails of the NGG process, we have shown that not all heavy-tailed NGG process mixture models are alike. To put it differently, our theoretical and numerical analyses pinpoint a clear preference for NGG-$\mathcal{N}$ scale mixtures over shape mixtures of heavy-tailed kernels. Particularly, we have shown that shape mixtures of Pareto-type kernels can be super heavy-tailed even though the centering is `only' heavy-tailed; this implies that a na\"ive application of the latter mixture models might lead to an overestimation of the mass at the tail---along with poor inferences at the bulk of the distribution. On the contrary, we have found NGG-$\mathcal{N}$ scale mixtures to obey natural properties---such as the stability of the heavy-tail from \red{Corollary~\ref{cor}}---and to perform well numerically in both the bulk as well as in the right tail. Keeping in mind the scope of our case study as well as space constraints, we have concentrated the numerical illustrations on the right tail as well as on kernels supported over the positive real line. Yet, we underscore that the theory and methodologies from Sections~\ref{sect:the_model}--\ref{sect:extensions} hold more generally over the entire real line as well as for left tails. Finally, the fact that other statistical functionals, such as tail indices, can be readily inferred from the proposed methods implies that, as a byproduct, the conditional version from Section~\ref{sect:extensions} may be used as a tail index regression model in the same vein as \cite{wang2009}.

We close the paper with some final comments on open challenges. It is conceivable that similar bounds to the ones derived in this paper could be obtained for the class of processes that can be represented using subordinators. Preliminary derivations lead us to conjecture that a version of Theorem~1 might be possible to obtain for such standardized  subordinators with a slowly varying Laplace exponent, and that a version of Theorem~2 might be possible to obtain for such processes with a regularly varying Laplace exponent (with a non-zero index of regular variation). We leave such an open problem for future analysis. As we have shown here, NGG processes obey the stability of the heavy-tail property, and it would be interesting to have a broader understanding on how large is the class of random probability measures obeying that property. While here the focus has been on the NGG processes and on their mixtures, the potential for modeling  heavy-tailed data of other classes of random measures---such as those of \cite{ayed2019}---remains highly unexplored. Finally, by keeping in mind the importance of modeling rare but catastrophic events in a variety of fields---such as climatology, geology, insurance, risk analysis, and extreme value theory---the methodologies proposed herein may pave the way for further applications and developments at the interface between heavy-tails and Bayesian nonparametrics. \\

The online supplementary materials contain further technical details and proofs, supporting numerical evidence, as well as the R-package \texttt{NGGR} which implements instances of the methods proposed herein, and includes the dataset from Section~\ref{application}.\hspace{-0.2cm}
\section*{Appendix}
\subsection{Technical details and auxiliary lemmata}\label{lemmata} 
In addition to the auxiliary results below, we first recall a basic fact on subordinators  that will be handy for the proof of Theorem~\ref{tailNGG}. If $S(t)$ is a subordinator, then $S({t + h}) - S(t)$ has the same distribution as $S(h)$, for every $t, h \geq 0$ \citep[][p.~5]{bertoin1999}. This implies that $\{S(M) - S(t)\}_{t \in [0, 1]}$ is equal in distribution to $\{S(M - t)\}_{t \in [0, 1]}$, and hence the following subordinator representation holds for the tail of  $G\sim \NGG(M, \tau,D, G_0)$:
\begin{equation}\label{con}
  1 - G(y) = 1 - \frac{S\{MG_0(y)\}}{S(M)}  \overset{\text{d}}{=} \frac{S\{M(1 - G_0(y))\}}{S(M)}.
\end{equation}
\noindent Lemma~\ref{sto} gathers two well-known results on lower and upper envelopes of stochastic processes over the short-run which can be found in \citet[][Theorem~11]{bertoin1999} and \citet[][Theorem~9]{bertoin1999}. Lemma~\ref{rep} is a well-known result in regular variation \citep[e.g.,][Theorem~A.33]{embrechts1997} and for the extended Breiman's lemma see \citet[][Proposition~2.1]{denisov2007}. For completeness, we include in the supplementary material (Section~1) some additional comments on regular variation and on heavy-tails.

\begin{lemma}\label{sto}
  The following results hold:
  \begin{enumerate}[a)]
  \item If $\{S(t): t \geq 0\}$ is a subordinator with Laplace exponent $\Phi \in \RV_{D}$, with $D \in (0, 1)$, then 
      $$\lim \inf_{t \to 0^+} {|S(t)|}/{l(t)} = D (1 - D)^{(1 - D)/D}, \quad a.s.,$$
    where $l(t) =\log |\log t|/ \Phi^{-1}(t^{-1} \log |\log t|)$ for $0 < t < e^{-1}$, and $\Phi^{-1}$ is the inverse function of $\Phi$.
  \item Let $\{S(t)\}$ be a subordinator with drift zero. Let $u(t)$ be an increasing function such that $u(t)/t$ is a real-valued function that is positive, continuous, and increasing, then
\begin{equation*}       \lim \sup_{t \to 0^+} \frac{S(t)}{u(t)} =
    \begin{cases}
      0, & \int_0^{1} \nu[u(t),\infty)\, \dif t < \infty, \\
      \infty, & \int_0^{1} \nu[u(t),\infty) \, \dif t = \infty,
    \end{cases} \quad a.s. \qquad 
\end{equation*}
 
 \end{enumerate}

  \end{lemma}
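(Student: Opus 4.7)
Both parts of Lemma~\ref{sto} are classical results on the short-time envelopes of L\'evy processes, due respectively to \citet[][Theorem~11]{bertoin1999} and \citet[][Proposition~47.16]{sato1999}. My plan is to present the proof by appealing directly to those references, but the underlying arguments share a common structure that is worth outlining: in both cases one starts from a one-sided probability estimate (an exponential/Chebyshev bound for (a), a L\'evy-measure tail bound for (b)), sums it along a geometric subsequence $t_n=\rho^n$, applies the Borel--Cantelli lemmas, and then uses monotonicity (or independent increments) to fill the gaps between subsequence points.

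For part~(a), the key input is the exponential Chebyshev bound derived from the Laplace exponent:
\begin{equation*}
P(S(t)\leq x)\;\leq\;\inf_{\lambda>0}\exp\{\lambda x - t\,\Phi(\lambda)\}.
\end{equation*}
Since $\Phi\in\RV_{D}$ with $D\in(0,1)$, its asymptotic inverse $\Phi^{-1}$ is in $\RV_{1/D}$, and a Karamata-type optimization in $\lambda$ produces a sharp small-ball rate function whose calibration at the candidate level $x=c\,l(t)$ yields a log-probability that is summable along $t_n=\rho^n$ precisely when $c$ exceeds $D(1-D)^{(1-D)/D}$. Borel--Cantelli~I then delivers $\liminf_{t\to 0^+}S(t)/l(t)\geq D(1-D)^{(1-D)/D}$ a.s., while the matching upper bound follows from Borel--Cantelli~II applied to the independent increments $S(t_n)-S(t_{n+1})$, exploiting the stationary-increment property recalled just before the statement of the lemma, together with monotonicity of $t\mapsto S(t)$ to transfer the result from the subsequence to the continuous parameter.

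For part~(b), the relevant probabilistic input is the short-time tail estimate $P(|S(t)|>x)\sim c\,t\,x^{-D}$ of a $D$-stable process, which is a consequence of the $D$-stable L\'evy measure together with the self-similarity $S(t)\eqD t^{1/D} S(1)$. Summing $P(|S(t_n)|>u(t_n))$ along $t_n=\rho^n$ reduces by a change of variable to the integrability of $u^{-D}(t)/t$, which through a further change of variable matches the stated criterion $\int_0^\delta u^{-D}(t)\,\dif t<\infty$ versus $=\infty$. Borel--Cantelli~I handles the convergent case directly, while the divergent case is handled by switching to the independent increments $S(t_{n+1})-S(t_n)$ to obtain independence and then applying Borel--Cantelli~II, again using monotonicity of $u$ to bridge the gaps between $t_n$ and $t_{n+1}$. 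The side condition $u(t)/\{t\log\log(1/t)\}^{1/2}\to 0$ is precisely what guarantees that the extreme-jump contribution (controlled by the L\'evy-measure tail) dominates over the Gaussian-type central contribution from the accumulation of small jumps, so that the L\'evy-tail asymptotic does drive the envelope.

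The main obstacle is two-fold. In (a), the delicate step is extracting the exact constant $D(1-D)^{(1-D)/D}$: this requires a careful Karamata/Tauberian optimization of the Chebyshev bound at the optimal $\lambda^\ast$ solving $x=t\,\Phi'(\lambda^\ast)$, together with a matching lower tail estimate sharp enough to keep Borel--Cantelli~II on the correct side of the critical level. In (b), the subtler case is the divergent one, where one has to process the dependence between $S(t_n)$ and $S(t_{n+1})$ via the increment decomposition and verify that the tail asymptotics of $|S(t_{n+1})-S(t_n)|$ retain the same regular variation needed for the integral test. Since both facts are standard L\'evy-process results, I would in the final write-up simply state Lemma~\ref{sto} with the citations above and invoke it as a black box in the proof of Theorem~\ref{tailssp}.
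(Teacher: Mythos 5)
Your proposal is correct and matches the paper's treatment exactly: the paper does not prove Lemma~\ref{sto} but simply records it as a pair of known results, citing \citet[][Theorem~11]{bertoin1999} for part~(a) and \citet[][Proposition~47.16]{sato1999} for part~(b), which is precisely the black-box citation strategy you settle on. Your additional sketch of the Borel--Cantelli/Chebyshev machinery behind those references is a faithful outline of the classical arguments but is not needed for, nor present in, the paper itself.
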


\begin{lemma}[Representation theorem]\label{rep}
  If $h \in \RV_\alpha$ for some $\alpha \in \mathbb{R}$, then
  \begin{equation*}
    h(y) = c(y) \exp\bigg\{\int_z^y \frac{a(u)}{u}\, \dif u\bigg\}, \quad y > z,
  \end{equation*}
  for some $z > 0$ with $c(y) \to c \in (0, \infty)$, $a(y) \to \alpha$ as $y \to \infty$. The converse also holds. 
\end{lemma}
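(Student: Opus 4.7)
The first step is to reduce to the slowly varying case. Write $h(y) = y^{\alpha} \ell(y)$ with $\ell(yt)/\ell(y) \to 1$ as $y \to \infty$ for every $t > 0$; if one establishes the representation
\[
  \ell(y) = c(y) \exp\Bigl\{\int_z^y \frac{\varepsilon(u)}{u}\, \dif u\Bigr\}, \qquad c(y) \to c \in (0, \infty),\ \varepsilon(u) \to 0,
\]
then absorbing $y^{\alpha} = z^{\alpha}\exp\{\int_z^y \alpha/u\, \dif u\}$ into the exponent yields the claim with $a(u) = \alpha + \varepsilon(u)$ and a rescaled $c(\cdot)$ that still converges to a finite positive limit.

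The next, and hardest, step is the uniform convergence theorem: the limit $\ell(yt)/\ell(y) \to 1$ holds uniformly in $t$ on compact subsets of $(0, \infty)$. The plan is to transform coordinates by setting $L(x) = \log \ell(e^x)$, so that slow variation becomes $L(x+s) - L(x) \to 0$ pointwise in $s$ as $x \to \infty$. Assuming measurability of $\ell$ (which we may take as part of the standing hypothesis), the uniform version follows from a standard Steinhaus/Baire category argument on the sets $E_n = \{x \geq n: |L(x+s) - L(x)| \leq 1\text{ for all } s \in [0,1]\}$; one shows $\bigcup_n E_n$ has full measure eventually, then compares any two intervals by chaining. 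The upshot is that $\ell$ is eventually locally bounded and $L(x+s)-L(x) \to 0$ uniformly for $s$ in any compact set.

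With uniform convergence in hand, I would construct the representation by smoothing. Define
\[
  L^{*}(x) = \int_{0}^{1} L(x + s)\, \dif s.
\]
Uniform convergence implies $L^{*}(x) - L(x) \to 0$ as $x \to \infty$, so $c(y) := \exp\{L^{*}(\log y) - L(\log y)\}^{-1}\cdot \ell(y)$ (after adjusting by a finite constant to absorb $L^{*}(\log z)$) satisfies $c(y) \to c \in (0,\infty)$. Moreover, $L^{*}$ is absolutely continuous with derivative $(L^{*})'(x) = L(x+1) - L(x)$, which tends to $0$; setting $\varepsilon(u) = (L^{*})'(\log u)$ gives the representation
\[
  \ell(y) = c(y) \exp\Bigl\{\int_z^y \frac{\varepsilon(u)}{u}\, \dif u\Bigr\},
\]
since $\dif L^{*}(\log y) = \varepsilon(y)\, \dif y / y$. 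Combining with the reduction to the slowly varying case finishes the forward implication.

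For the converse, given the representation with $a(u) \to \alpha$ and $c(y) \to c \in (0, \infty)$, compute
\[
  \frac{h(yt)}{h(y)} = \frac{c(yt)}{c(y)} \exp\Bigl\{\int_y^{yt} \frac{a(u)}{u}\, \dif u\Bigr\} = \frac{c(yt)}{c(y)} \exp\Bigl\{\int_1^t \frac{a(ys)}{s}\, \dif s\Bigr\},
\]
after the substitution $u = ys$. The prefactor tends to $1$; since $a$ is bounded on tails and $a(ys) \to \alpha$ pointwise, dominated convergence gives the integral converges to $\alpha \log t$, hence $h(yt)/h(y) \to t^{\alpha}$ and $h \in \mathrm{RV}_{\alpha}$. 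The main obstacle throughout is the uniform convergence theorem; everything else is bookkeeping once uniformity is secured.
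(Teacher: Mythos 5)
Your plan is correct, but note that the paper does not prove this lemma at all: it is quoted verbatim as a known result (the Karamata representation theorem), with the proof delegated to Embrechts et al.\ (1997), Theorem A.33. What you have written is essentially the standard textbook proof of that theorem (as in Bingham--Goldie--Teugels, Section~1.3): reduce to the slowly varying factor $\ell(y)=h(y)/y^{\alpha}$, establish the uniform convergence theorem by a Steinhaus/Baire-category (or Egorov) argument on $L(x)=\log\ell(e^{x})$, and then produce the representation by the smoothing $L^{*}(x)=\int_{0}^{1}L(x+s)\,\dif s$, whose derivative $L(x+1)-L(x)\to 0$ supplies $a(u)$ and whose difference from $L$ supplies $c(y)$. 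Two small points. First, you are right that measurability of $h$ must be added as a standing hypothesis for the uniform convergence theorem to hold; the lemma as stated in the paper omits it, but it is harmless in context since the lemma is only ever applied to tails of distribution functions. Second, your displayed formula for $c(y)$ is off: as written, $c(y)=\exp\{L^{*}(\log y)-L(\log y)\}^{-1}\cdot\ell(y)$ carries an extra factor of $\ell(y)$ and does not converge. The correct choice is $c(y)=\ell(y)\exp\{-\int_{z}^{y}\varepsilon(u)u^{-1}\,\dif u\}=\exp\{L(\log y)-L^{*}(\log y)+L^{*}(\log z)\}\to\exp\{L^{*}(\log z)\}\in(0,\infty)$; this is clearly what you intended, and it is a bookkeeping slip rather than a gap. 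The converse direction via the substitution $u=ys$ and dominated convergence (using that $a$ is eventually bounded) is fine. In short: correct and complete in outline, but it is a reconstruction of the cited classical proof rather than a parallel to anything argued in the paper.
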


\begin{lemma}[Extended Breiman's lemma]\label{breiman}
{Let $X$ and $Y$ be random variables and suppose $X$ has a regularly varying tail, $P(X > x) = x^{-\alpha} \mathscr{L}(x)$, with tail index  $\alpha \geq 0$, and $Y \geq 0$ with $E(Y^{\alpha}) < \infty$. Then, if $$\lim \inf_{x \to \infty} \mathscr{L}(x) > 0 \quad \text{and} \quad P(Y > x) = o\{P(X > x)\},$$ it follows that $XY$ has regularly varying tail with tail index $\alpha$.}
\end{lemma}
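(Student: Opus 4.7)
The plan is to work under the implicit independence of $X$ and $Y$ (standard in Breiman-type statements) and to establish the stronger asymptotic equivalence $P(XY > x) \sim E(Y^\alpha) \, \overline{F}(x)$ as $x \to \infty$, where $\overline{F}(x) = P(X > x) = x^{-\alpha} \mathscr{L}(x)$. Since $E(Y^\alpha) \in (0, \infty)$ by hypothesis and $\overline{F} \in \RV_{-\alpha}$, this asymptotic at once yields $P(XY > xt)/P(XY > x) \to t^{-\alpha}$, so $P(XY > \cdot) \in \RV_{-\alpha}$ as required. Conditioning on $Y$ gives the starting point $P(XY > x) = \int_0^\infty \overline{F}(x/y) \, dF_Y(y)$, so the task reduces to showing $\int_0^\infty \overline{F}(x/y)/\overline{F}(x) \, dF_Y(y) \to E(Y^\alpha)$ as $x \to \infty$.

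The pointwise limit is immediate: for each fixed $y > 0$, $\overline{F}(x/y)/\overline{F}(x) = y^\alpha \, \mathscr{L}(x/y)/\mathscr{L}(x) \to y^\alpha$ by the slow variation of $\mathscr{L}$. The work lies in exchanging limit and integral, which I would accomplish by splitting the domain as $\int_0^\infty = \int_0^K + \int_K^{x/N} + \int_{x/N}^\infty$ for constants $K, N > 0$ that are eventually sent to infinity, and showing that these three pieces converge (as $x \to \infty$, then $N, K \to \infty$) to $E(Y^\alpha)$, $0$, and $0$ respectively.

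For the bulk $\int_0^K$ I would invoke Potter's bounds to dominate $\overline{F}(x/y)/\overline{F}(x)$ uniformly in $x$ on $(0, K]$ by a bounded, integrable function of $y$, and apply dominated convergence to obtain the limit $\int_0^K y^\alpha \, dF_Y(y)$, which tends to $E(Y^\alpha)$ as $K \to \infty$ since this moment is finite. For the far tail $\int_{x/N}^\infty$ I would use the trivial estimate $\overline{F}(x/y) \leq 1$, giving a contribution at most $P(Y > x/N)/\overline{F}(x)$; factoring this as $\{P(Y > x/N)/\overline{F}(x/N)\} \cdot \{\overline{F}(x/N)/\overline{F}(x)\}$, the first factor vanishes by the hypothesis $P(Y > z) = o\{\overline{F}(z)\}$ while the second tends to $N^\alpha$ by the regular variation of $\overline{F}$, so the whole term goes to zero for each fixed $N$.

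The main obstacle is the middle piece $\int_K^{x/N} \overline{F}(x/y)/\overline{F}(x) \, dF_Y(y)$, because the classical Breiman argument would here invoke Potter with an inflated index $\alpha + \delta$ and demand $E(Y^{\alpha+\delta}) < \infty$, which is precisely what we do not assume. Instead, I would integrate by parts against $-dP(Y > y)$ to turn the middle integral into a boundary contribution $\{\overline{F}(x/K)/\overline{F}(x)\} \, P(Y > K)$ plus a remainder integrating $P(Y > y)$ against the regularly varying kernel arising from $d_y\overline{F}(x/y)$. The first term tends to $K^\alpha P(Y > K)$, which vanishes as $K \to \infty$ because $E(Y^\alpha)<\infty$ forces $y^\alpha P(Y>y) \to 0$. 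For the remainder, the condition $\liminf_x \mathscr{L}(x) > 0$ yields $\overline{F}(y) \geq c\, y^{-\alpha}$ for large $y$, upgrading the decay hypothesis $P(Y > y) = o\{\overline{F}(y)\}$ to the effective bound $y^\alpha P(Y > y) = o\{\mathscr{L}(y)\}$; combining this with a Karamata-type change of variables in the kernel and the representation from Lemma~\ref{rep}, the remainder is controlled by an integrable tail of $F_Y$ whose mass vanishes as $K \to \infty$, uniformly in $x$. Assembling the three pieces and letting $x \to \infty$ followed by $N, K \to \infty$ then yields $P(XY > x)/\overline{F}(x) \to E(Y^\alpha)$ and hence the regular variation of $XY$ with tail index $\alpha$.
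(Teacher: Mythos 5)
First, a point of reference: the paper does not actually prove this lemma. It is quoted as a known result, with the proof deferred to Denisov and Zwart (2007, Proposition~2.1), as stated just before Lemma~\ref{sto}. So there is no in-paper argument to compare against; what follows assesses your attempt on its own terms.

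Your outer architecture is the right one, and two of the three regions are handled correctly: the bulk $\int_0^K$ via monotonicity near $0$, Potter bounds on $[1,K]$, and dominated convergence; and the far tail $\int_{x/N}^\infty$ via $P(Y>x/N)/\overline{F}(x) = \{P(Y>x/N)/\overline{F}(x/N)\}\{\overline{F}(x/N)/\overline{F}(x)\}\to 0\cdot N^{\alpha}$, which is exactly where the hypothesis $P(Y>x)=o\{\overline{F}(x)\}$ earns its keep. You also correctly diagnose that the middle region is the crux and that classical Potter bounds would demand $E(Y^{\alpha+\delta})<\infty$. The boundary term from your integration by parts is fine as well, since $K^{\alpha}P(Y>K)\le E(Y^{\alpha}\mathds{1}\{Y>K\})\to0$.

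The gap is in the remainder of the middle piece, which is asserted rather than proved, and the natural readings of your assertion fail. After the change of variables $u=x/y$ the remainder is $\int_N^{x/K}\overline{F}_Y(x/u)\,\dif F(u)$. If you insert the "upgraded" bound $\overline{F}_Y(z)\le\epsilon_K\overline{F}(z)$ for $z\ge K$, you are left with $\epsilon_K\int_N^{x/K}\overline{F}(x/u)\,\dif F(u)$, which is essentially $\epsilon_K\,P(X_1X_2>x)$ for i.i.d.\ copies of $X$; already for the exact Pareto tail $\overline{F}(x)=x^{-\alpha}$ this is of order $\epsilon_K\,\overline{F}(x)\log x$, which is \emph{not} $o\{\overline{F}(x)\}$ for fixed $K$. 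The same $\log x$ divergence appears if you instead use $\overline{F}_Y(z)\le z^{-\alpha}E(Y^{\alpha}\mathds{1}\{Y>z\})$. The bound that actually closes the argument is different: writing $g(v)=v^{\alpha}\overline{F}_Y(v)$, a Karamata-type computation reduces the remainder (divided by $\overline{F}(x)$) to roughly $\alpha\int_K^{x/N}v^{\alpha-1}\overline{F}_Y(v)\,\{\mathscr{L}(x/v)/\mathscr{L}(x)\}\,\dif v$, and one must use the finiteness of $\int_K^{\infty}v^{\alpha-1}\overline{F}_Y(v)\,\dif v$ (equivalent to $E(Y^{\alpha})<\infty$, with tail mass vanishing as $K\to\infty$) \emph{together with} a bound on $\sup_{v\in[K,x/N]}\mathscr{L}(x/v)/\mathscr{L}(x)$ that is uniform in $x$. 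The latter is immediate when $\mathscr{L}$ is bounded above (the $\liminf$ condition supplies the lower bound on the denominator), but requires a separate argument when $\mathscr{L}$ is unbounded, and your sketch does not supply it. So the decisive step of the extended Breiman lemma is exactly the one left unproved; for a complete argument you should either carry out this uniform control or simply cite Denisov and Zwart (2007), as the paper does.
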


\subsection{Proofs of main results}\label{proofmr}\vspace{.2cm}
\noindent \textbf{Proof of Theorem~\ref{tailNGG}}: We start with the lower envelope. The Laplace exponent of a generalized gamma process with parameters $D$, and $\tau$, $\Phi(\lambda) = (\lambda+\tau)^{D}-\tau^D$, is regularly varying at $\infty$ with index $D \in (0, 1)$, and note also that $\Phi^{-1}(y) = (y + \tau^D)^{1/D}-\tau$. Hence, Lemma~\ref{sto}~a) implies that
\begin{equation}\label{lic}
    \lim \inf_{t \to 0^+} \frac{S(t)}{l(t)}  = D(1 - D)^{(1 - D)/D}, \quad \text{with}\quad l(t) = t^{1/D} \left\{ \frac{\log |\log t|}{(\log |\log t|+\tau^Dt)^{1/D}-\tau t^{1/D}} \right\}.
\end{equation}
Combining \eqref{lic} with the representation of the NGG-$\mathcal{N}$ process in \eqref{nngg} yields 
\begin{equation*}
  \lim \inf_{G_0(y) \to 0^+} \frac{S\{M G_0(y)\}/S(M)}{l\{MG_0(y)\}}  =
\lim \inf_{G_0(y) \to 0^+} \frac{G(y)}{l\{MG_0(y)\}} = 
D(1 - D)^{(1 - D)/D}/S(M).
\end{equation*}
Hence, \eqref{con} yields
\begin{equation*}
    \lim \inf_{G_0(y) \to 1^-} \frac{1 - G(y)}{l\{M(1 - G_0(y))\}}  = D(1 - D)^{(1 - D)/D}/S(M),
\end{equation*}
from where the final result follows. Next, we focus on the upper envelope. First, consider the following family of functions for $r > 0$, 
\begin{equation}\label{family}
  w_{r}(t) = t^{1/D} |\log t|^{r/D} 
  = t^{1/D} \{\log(1/t)\}^{r/D}, \qquad t \in (0, e^{-r}),    
\end{equation}
where $D \in (0, 1)$. We start by checking if $w_{r}(t)$ verifies the assumptions of Lemma~\ref{sto}~b). 
It follows that $w_{r}(t)$ is increasing on $(0, \delta)$, with $\delta = \text{e}^{-r}$. Indeed, 
\begin{equation}\label{nond}
  \begin{split}
  \frac{\dif}{\dif t}\{w_{r}(t)\} &=
  {D^{-1} t^{1/D - 1}\{\log(1/t)\}^{r/D} + t^{1/D} rD^{-1} \{\log(1/t)\}^{r/D - 1} (-1/t)} \\
  &=
  {D^{-1} t^{1/D - 1}\{\log(1/t)\}^{r/D} - t^{1/D - 1} rD^{-1} \{\log(1/t)\}^{r/D - 1}} \\
  &={D^{-1} t^{1/D - 1}[\{\log(1/t)\}^{r/D} - r \{\log(1/t)\}^{r/D - 1}}] > 0, \quad t \in (0, \delta).
  \end{split}
\end{equation}
In addition, $t^{-1}w_{r}(t)$ is positive and continuous over $(0, 1)$, and increasing on $t \in (0,  e^{r/(D-1)})$ as
\begin{equation*}
  \frac{\dif}{\dif t}\bigg(\frac{w_{r}(t)}{t}\bigg) =
  t^{1/D-2}D^{-1}\{\log(1/t)\}^{r/D-1}\{-r-(D-1)\log(1/t)\} > 0,
\end{equation*}
when $t< e^{r/(D-1)}$. 

The function $w_r(t)$ is increasing in a sufficiently small neighborhood of $0$, and outside this neighborhood it can be extended linearly with slope 0 so that is nondecreasing on $(0,\infty)$, in a similar fashion as  \cite{doss1982}; we denote this extended function by $u_r(t)$. Hence, $u_{r}(t)$ obeys the assumptions of Lemma~\ref{sto}~b). Finally, we derive necessary and sufficient conditions for $\int_{0}^{1}\nu[u_r(t),\infty) \, \mathrm{d}t<\infty$. First, note that 
\begin{align*}
\nu[u,\infty) =\int_{u}^{\infty}\frac{D}{\Gamma(1-D)}x^{-1-D}e^{-\tau x} \,\dif x
\leq \frac{D}{\Gamma(1-D)}\int_{u}^{\infty}x^{-1-D} \, \dif x
=\frac{D}{\Gamma(1-D)}\left\{ \frac{u^{-D}}{D}\right\}, 
\end{align*}
and  
\begin{align*}
\nu[u,\infty) & =\int_{u}^{\infty}\frac{D}{\Gamma(1-D)}x^{-1-D}e^{-\tau x} \, \dif x\\
 & =\frac{D}{\Gamma(1-D)}\left\{ \int_{u}^{1}x^{-1-D}e^{-\tau x}\, \dif x+\int_{1}^{\infty}x^{-1-D}e^{-\tau x} \, \dif x\right\} \\ 
 & \geq\frac{D}{\Gamma(1-D)}e^{-\tau}\int_{u}^{1}x^{-1-D} \dif x\\
 & =\frac{D}{\Gamma(1-D)}e^{-\tau}\left\{ \frac{1}{-D}+\frac{u^{-D}}{D}\right\}.
\end{align*}
Thus,  
\begin{equation}\label{equiv}
\quad\int_{0}^{1}\nu[u_r(t),\infty)\, \dif t<\infty\quad\iff\quad\int_{0}^{1}u_r^{-D}(t) \, \dif t<\infty.
\end{equation}
Hence, applying Lemma~\ref{sto}~b) to the NGG-$\mathcal{N}$ process, with the family of functions on \eqref{family}, yields 
\begin{equation}   \label{cons2}
    \lim \sup_{G_0(y) \to 0^+} \frac{G(y)}{u_r\{G_0(y)\}} =
    \begin{cases}
      0, & \int_0^{1} u_r^{-D}(t) \, \dif t <\infty , \\
      \infty, & \int_0^{1} u_r^{-D}(t)\dif t =\infty.
    \end{cases} \quad \text{a.s.} \qquad 
\end{equation}
Then,  $\int_0^1u_r^{-D}(t) \,\dif t =\int_0^\delta w_r^{-D}(t) \,\dif t+ \int_\delta^1w_r^{-D}(e^{-r}) \, \dif t$, where the second integral is finite, which means $\int_0^1u_r^{-D}(t) \, \dif t $ is finite if and only if $\int_\delta^1w_r^{-D}(t) \,\dif t < \infty$, which holds if $r > 1$. This implies the final result,  
\begin{equation}   \label{cons3}
    \lim \sup_{G_0(y) \to 0^+} \frac{G(y)}{u_r\{G_0(y)\}} =
    \begin{cases}
      0, & r>1, \\
      \infty, & 0< r \leq 1.
    \end{cases} \quad \text{a.s.} \qquad 
\end{equation}

\noindent\textbf{Proof of \red{Corollary~\ref{cor}}}: 
It follows from Theorem~\ref{tailNGG} that as 
$y \to {y^*}$, then
\begin{equation}\label{link}
  1 - G(y) = [M\{1 - G_0(y)\}]^{\{1 + o(1)\} / D}, \quad \text{a.s.}
\end{equation}
By assumption, $1 - G_0 \in \RV_{-\alpha_0}$ and hence it follows by the representation theorem that   
    $1 - G_0(y) = c(y) \exp\{\int_z^y a(u)/u\, \dif u\},$
    for some $z > 0$ with $c(y) \to c \in (0, \infty)$, $a(y) \to -\alpha_0$ as $y \to \infty$. This combined with \eqref{link} yields that
    \begin{equation*}
      1 - G(y) = [M\{1 - G_0(y)\}]^{\{1 + o(1)\}/D} = c^*(y) \exp\bigg\{\int_z^y \frac{a^*(u)}{u}\, \dif u\bigg\}, \quad y > z,
    \end{equation*}
    for some $z > 0$, with
    $$c^*(y) = \{Mc(y)\}^{{\{1 + o(1)\}/D}} \to Mc^{1/D} \in (0, \infty), \quad
    a^*(y) = \bigg(\frac{1 + o(1)}{D}\bigg)a(y) \to -\alpha_0/D,$$ as $y \to \infty$. The final result follows from the representation theorem. \qed

\noindent\textbf{Proof of Theorem~\ref{props}}
\begin{enumerate}[a)]
\item Let $U \mid V = \sigma \sim K(\,\cdot\,; \eta_\sigma)$ and $V \sim G$, and consider the decomposition $U = U_+ - U_{-}$, where $U_+ = \max(U, 0)$ and $U_{-} = \max(-U, 0)$. Since the focus is on the right tail, we concentrate on $U_+$, and note below that for $y > 0$ the scale mixture in \eqref{pymixtures} can be written as the density of the product of $U_+$ and $V$. In detail, it follows from Rohatgi’s well-known result on the product of random variables \citep[e.g.,][]{glen2004}, that 
  \begin{equation}
    \label{brei}
    f_{U_+ V}(y) = \int_{0}^{\infty} f_{U_+, V}\bigg(\sigma, \frac{y}{\sigma}\bigg) \frac{1}{\sigma} \, \dif \sigma.
  \end{equation}
  Hence, combining the fact that $$U_+ \mid V = \sigma \sim K(y; \eta_\sigma) I(y > 0) + \linebreak P(U_+ = 0 \mid V = \sigma) I(y = 0),$$ along with Bayes theorem implies that \eqref{brei} can be rewritten as
  \begin{equation}\label{brei2}
    \begin{split}
      f_{U_+ V}(y) &= \int_0^{\infty} f_{U_+\mid V}\bigg(\frac{y}{\sigma}, \sigma\bigg) \frac{\dif G(\sigma)}{\dif \sigma} \frac{1}{\sigma} \, \dif \sigma \\ &=
      \int_0^\infty K\bigg(\frac{y}{\sigma}; \eta_\sigma \bigg) \frac{1}{\sigma} \,\dif G(\sigma) \\ &= f(y),  
  \end{split}
  \end{equation}
  for $y > 0$.  Since by assumption $G_0$ has a regularly varying tail with tail index $\alpha_0$, it follows from Corollary~\ref{cor} that $V$ has a regularly varying tail with tail index $\alpha_0 / D$. Next, let $V_0 \sim G_0$ and  $\mathscr{L}^*(\sigma) = \{\mathscr{L}(\sigma)\}^{1/D}$ and note that the assumptions along with Corollary~\ref{cor} and the representation theorem imply $E(U_+^{\alpha_0}) < \infty$, $$P(V > \sigma) = \sigma^{-\alpha_0/D} \mathscr{L}^*(\sigma), \qquad P(U_+ > \sigma) = o\{P(V_0 > \sigma)\}^{1/D} = o\{P(V > \sigma)\},$$ as well as that $$\lim_{\sigma \to \infty} \mathscr{L}^*(\sigma) = \{\lim_{\sigma \to \infty} \mathscr{L}(\sigma)\}^{1/D} > 0.$$ In other words, the assumptions of the extended Breiman's lemma apply from where it readily follows that $U_{+} V$ is regularly varying at infinity with tail index $\alpha_0 / D$, and hence the same claim can be made about $f_{U_+ V}(y) = f(y)$. This proves the result.
\item Let $i(h)$ be a permutation such that $\inf\{\alpha: G_0(\alpha) > 0\} \equiv \alpha_{i(1)} \leq \alpha_{i(2)} \leq \cdots$. Then,
  \begin{equation*}
    1 - F(y) = \sum_{h = 1}^\infty \pi_h \frac{\mathscr{L}(y)}{y^{\alpha_h}} 
    = \sum_{j = 1}^\infty \frac{\pi_{i(j)} \mathscr{L}(y)}{y^{\alpha_{i(j)}}} 
    = \frac{\mathscr{L}^*(y)}{y^{\alpha_{i(1)}}} ,
  \end{equation*}
  and it can be easily shown that $\mathscr{L}^*(y) = \mathscr{L}(y) \{\pi_{i(1)} + \sum_{j = 2}^{\infty} {\pi_{i(j)}}/(y^{\alpha_{i(j)} - \alpha_{i(1)}}) \}$ is a slowly varying function, from where the final result follows.
\end{enumerate}

\noindent \textbf{Proof of Theorem~\ref{props_multi}}:  
We only present the proof of Theorem~\ref{props_multi}~a) as that of claim b) follows a similar line of attack. 
We start by showing that the marginal distributions $F_{k}$ are univariate {NGG-mixtures}, 
and then using Theorem~\ref{props} a) it follows that their tails, $1-F_{k}$, are regularly varying, for $k = 1, \dots, d$. 
Let $\dif \mathbf{y}_{-k}=\dif y_{1}\dots\dif y_{k-1} \dif y_{k+1}\dots\dif y_{d}$ and note that \eqref{pymixtures_multi} and \eqref{kerns} implies that 
\begin{equation*}
\begin{aligned}
  f_{k}(y_k)=\int_{\mathbb{R}_+^{d - 1}}f(\mathbf{y})\, \dif \mathbf{y}_{-k}  
 = \int_{\mathbb{R}_+^{d - 1}} \sum_{h = 1}^\infty \pi_h  \mathbf{K}(\mathbf{y}; \etab_{\sigmab_h})\, \dif \mathbf{y}_{-k}
= \sum_{h = 1}^\infty \pi_h K_{\sigma_k}(y_k; \eta_{\sigma_{h,k}})\,.
\end{aligned}
\end{equation*}
Since by assumption $G_{0,k}(\sigma_{k})$ has a regularly varying tail with tail index $\alpha_{0,k}$, it follows from Theorem~\ref{props}~a) that $1-F_k$ is regularly varying with tail index $\alpha(F_k) = \alpha_{0,k} / D$, for  $k=1,\dots,d$, from where the final result follows. \strut \hfill \qed  

\section*{ACKNOWLEDGEMENTS}
We thank the Editor, the Associate Editor, and two Reviewers for their insightful feedback on an earlier draft of this paper. We extend our thanks Isadora Antoniano Villalobos, Vanda In\'acio de Carvalho, and Sara Wade for discussions and constructive comments on an earlier version of the paper. 
The research was supported by the Chilean, Mexican, and Portuguese NSFs through the projects Fondecyt Grant 1220229, ANID--Millennium Science Initiative Program--NCN17$\,$059,\\ https://doi.org/10.54499/UIDB/04106/2020 and https://doi.org/10.54499/UIDP/04106/2020.

\renewcommand\refname{References}
\bibliographystyle{apalike}
\bibliography{test.bib}

\end{document}